\documentclass[12pt]{amsart}

\usepackage[english]{babel}
\usepackage[margin=1in]{geometry}
\usepackage{amsmath}
\usepackage{amssymb}
\usepackage{amsfonts}
\usepackage{graphicx}
\usepackage{xcolor}
\usepackage[hidelinks]{hyperref}
\usepackage{tikz}
\usepackage[shortlabels]{enumitem}

\theoremstyle{plain}
\newtheorem{theorem}{Theorem}
\newtheorem{corollary}[theorem]{Corollary}
\newtheorem{proposition}[theorem]{Proposition}
\newtheorem{lemma}[theorem]{Lemma}

\theoremstyle{definition}

\newtheorem{remark}[theorem]{Remark}
\usetikzlibrary{cd}

\newcommand{\Y}{\mathcal{S}_q^Y}
\newcommand{\Sq}{\mathcal{S}_q}

\def\positivecrossing{\tikz[baseline=0]{
		\draw[dashed] (0,0) circle (1);
		\draw[thick] (-.7,-.7) -- (.7,.7);
		\draw[thick] (.1,-.1) -- (.7,-.7);
		\draw[thick] (-.7,.7) -- (-.1,.1);
}}

\def\verticalresolution{\tikz[baseline=0]{
		\draw[dashed] (0,0) circle (1);
		\draw[thick] plot [smooth, tension=1] coordinates {(-.7,.7) (-.3,0) (-.7,-.7)};
		\draw[thick] plot [smooth, tension=1] coordinates {(.7,.7) (.3,0) (.7,-.7)};
}}
\def\horizontalresolution{\tikz[baseline=0]{
		\draw[dashed] (0,0) circle (1);
		\draw[thick] plot [smooth, tension=1] coordinates {(-.7,.7) (0,.3) (.7,.7)};
		\draw[thick] plot [smooth, tension=1] coordinates {(-.7,-.7) (0,-.3) (.7,-.7)};
}}
\def\trivialcomponent{\tikz[baseline=0]{
		\draw[dashed] (0,0) circle (1);
		\draw[thick] (0,0) circle (.5);
}}
\def\htangle{\tikz[baseline=0]{
		\draw[dashed] (0,0) circle (1);
		\draw[thick] (-.4,0) -- (.4,0);
		\draw[thick] (-.8,.6) -- (-.4,0);
		\draw[thick] (-.8,-.6) -- (-.4,0);
		\draw[thick] (.8,.6) -- (.4,0);
		\draw[thick] (.8,-.6) -- (.4,0);
}}
\def\itangle{\tikz[baseline=0]{
		\draw[dashed] (0,0) circle (1);
		\draw[thick] (0,.4) -- (0,-.4);
		\draw[thick] (-.6,.8) -- (0,.4);
		\draw[thick] (.6,.8) -- (0,.4);
		\draw[thick] (-.6,-.8) -- (0,-.4);
		\draw[thick] (.6,-.8) -- (0,-.4);
}}
\def\monogon{\tikz[baseline=0]{
		\draw[dashed] (0,0) circle (1);
		\draw[thick] (0,0) circle (.5);
		\draw[thick] (0,-.5) -- (0,-1);
}}
\def\bigon{\tikz[baseline=0]{
		\draw[dashed] (0,0) circle (1);
		\draw[thick] (-1,0) -- (-.5,0);
		\draw[thick] (.5,0) -- (1,0);
		\draw[thick] plot [smooth, tension = 1] coordinates {(-.5,0) (0,.25) (.5,0)};
		\draw[thick] plot [smooth, tension = 1] coordinates {(-.5,0) (0,-.25) (.5,0)};
}}
\def\trigon{\tikz[baseline=0]{
		\draw[dashed] (0,0) circle (1);
		\draw[thick] (0,.5) -- (-.433,-.25);
		\draw[thick] (0,.5) -- (.433,-.25);
		\draw[thick] (-.433,-.25) -- (.433,-.25);
		\draw[thick] (0,.5) -- (0,1);
		\draw[thick] (-.433,-.25) -- (-.866,-.5);
		\draw[thick] (.433,-.25) -- (.866,-.5);
}}
\def\strand{\tikz[baseline=0]{
		\draw[dashed] (0,0) circle (1);
		\draw[thick] (-1,0) -- (1,0);
}}
\def\jwstrand{\tikz[baseline=0]{
		\draw[dashed] (0,0) circle (1);
		\draw[thick] (-1,.1) -- (-.2,.1);
		\draw[thick] (.2,.1) -- (1,.1);
		\draw[thick] (-1,-.1) -- (-.2,-.1);
		\draw[thick] (.2,-.1) -- (1,-.1);
		\draw[thick] (-.2,-.2) rectangle (.2,.2);
}}
\def\jwidentity{\tikz[baseline=0]{
		\draw[dashed] (0,0) circle (1);
		\draw[thick] (-1,.1) -- (1,.1);
		\draw[thick] (-1,-.1) -- (1,-.1);
}}
\def\jwreturn{\tikz[baseline=0]{
		\draw[dashed] (0,0) circle (1);
		\draw[thick] (-1,.1) -- (-.2,.1);
		\draw[thick] (.2,.1) -- (1,.1);
		\draw[thick] (-1,-.1) -- (-.2,-.1);
		\draw[thick] (.2,-.1) -- (1,-.1);
		\draw[thick] (-.2,-.1) arc[start angle = -90, end angle = 90, radius = .1];
		\draw[thick] (.2,.1) arc[start angle = 90, end angle = 270, radius = .1];
}}
\def\vertex{\tikz[baseline=0]{
		\draw[dashed] (0,0) circle (1);
		\draw[thick] (0,0) -- (0,1);
		\draw[thick] (0,0) -- (-1.71 * .5,-.5);
		\draw[thick] (0,0) -- (1.71 * .5,-.5);
}}
\def\jwvertex{\tikz[baseline=0]{
		\draw[dashed] (0,0) circle (1);
		\draw[thick] (.1,1) to [bend right = 25] (1.71 * .5 + .05,-.4);
		\draw[thick] (0,1) to [bend left = 25] (-1.71 * .5 - .05,-.4);
		\draw[thick] (1.71 * .5,-.5) to [bend right = 25] (-1.71 * .5,-.5);
}}

\def\pqknot{\tikz[baseline=0]{
		\node at (0,1.5) {$k$ \text{times}};
		\draw[-stealth] (0,1.35) -- (-.6,1.1);
		\draw[-stealth] (0,1.35) -- (-.2,1.1);
		\draw[-stealth] (0,1.35) -- (.2,1.1);
		\draw[-stealth] (0,1.35) -- (.6,1.1);
		\draw[-stealth] (0,1.35) -- (1,1.1);
		\node at (-2.1,0) {$l$ times};
		\draw[-stealth] (-1.35,0) -- (-1.1, -1);
		\draw[-stealth] (-1.35,0) -- (-1.1, 0);
		\draw[->>] (-1,-1) -- (-1,.1);
		\draw (-1,.1) -- (-1,1);
		\draw[->>] (1,-1) -- (1,.1);
		\draw (1,.1) -- (1,1);
		\draw[->] (-1,1) -- (.1,1);
		\draw (.1,1) -- (1,1);
		\draw[->] (-1,-1) -- (.1,-1);
		\draw (.1,-1) -- (1,-1);
		\draw[thick, -stealth] (-1,-1) -- (-.6,0);
		\draw[thick] (-.6,0) -- (-.2,1);
		\draw[thick] (-.2,-1) -- (.6,1);
		\draw[thick] (.6,-1) -- (1,0);
		\draw[thick] (-1,0) -- (-.6,1);
		\draw[thick] (-.6,-1) -- (.2,1);
		\draw[thick] (.2,-1) -- (1,1);
}}

\def\hexagon{\tikz[baseline=0]{
		\draw[->>] (-1,-1) -- (-1,.1);
		\draw (-1,.1) -- (-1,1);
		\draw[->>] (1,-1) -- (1,.1);
		\draw (1,.1) -- (1,1);
		\draw[->] (-1,1) -- (.1,1);
		\draw (.1,1) -- (1,1);
		\draw[->] (-1,-1) -- (.1,-1);
		\draw (.1,-1) -- (1,-1);
		\draw[thick] (-1,0) -- (1,0);
		\draw[thick] (-.2,0) -- (0,1);
		\draw[thick] (0,-1) -- (.2,0);
}}
\def\ranktwomonogon{\tikz[baseline=0]{
		\draw[->>] (-1,-1) -- (-1,.1);
		\draw (-1,.1) -- (-1,1);
		\draw[->>] (1,-1) -- (1,.1);
		\draw (1,.1) -- (1,1);
		\draw[->] (-1,1) -- (.1,1);
		\draw (.1,1) -- (1,1);
		\draw[->] (-1,-1) -- (.1,-1);
		\draw (.1,-1) -- (1,-1);
		\draw[thick] (-1,0) -- (1,0);
		\draw[thick] (-.2,0) -- (0,1);
		\draw[thick] (0,-1) -- (.2,0);
		\draw[thick] (.4,0) -- (.4,.2);
		\draw[thick] (.8,0) -- (.8,.2);
		\draw[thick] (.4,.3) circle (.1);
		\draw[thick] (.8,.3) circle (.1);
		\draw[fill] (.55,.3) circle (.01);
		\draw[fill] (.6,.3) circle (.01);
		\draw[fill] (.65,.3) circle (.01);
		\node at (.6,.5) {$\overbrace{\begin{tikzpicture} \path (-.3,.3); \end{tikzpicture}}^{\tiny{n - 1}}$};
}}

\def\jwktilde{\tikz[baseline=0]{
		\draw[->>] (-1,-1) -- (-1,.1);
		\draw (-1,.1) -- (-1,1);
		\draw[->>] (1,-1) -- (1,.1);
		\draw (1,.1) -- (1,1);
		\draw[->] (-1,1) -- (.1,1);
		\draw (.1,1) -- (1,1);
		\draw[->] (-1,-1) -- (.1,-1);
		\draw (.1,-1) -- (1,-1);
		\draw (.5,.5) circle (.1);
		\draw[thick] (-1,.1) -- (-.9,.1);
		\draw[thick] (-1,-.1) -- (-.9,-.1);
		\draw[thick] (-.6,.1) -- (-.3,.1);
		\draw[thick] (-.6,-.1) -- (.1,-.1);
		\draw[thick] (-.1,.1) -- (.1,.1);
		\draw[thick] (.4,.1) -- (1,.1);
		\draw[thick] (.4,-.1) -- (.7,-.1);
		\draw[thick] (.9,-.1) -- (1,-.1);
		\draw[thick] (-.3,.1) -- (-.3,.3);
		\draw[thick] (-.1,.1) -- (-.1,.3);
		\draw[thick] (-.3,.6) -- (0,1);
		\draw[thick] (-.1,.6) -- (.2,1);
		\draw[thick] (.7,-.1) -- (0,-1);
		\draw[thick] (.9,-.1) -- (.2,-1);
		\draw[thick] (-.9, .15) rectangle (-.6, -.15);
		\draw[thick] (-.35,.3) rectangle (-.05,.6);
		\draw[thick] (.1,.15) rectangle (.4,-.15);
}}
\def\jwktildezero{\tikz[baseline=0]{
		\draw[->>] (-1,-1) -- (-1,.1);
		\draw (-1,.1) -- (-1,1);
		\draw[->>] (1,-1) -- (1,.1);
		\draw (1,.1) -- (1,1);
		\draw[->] (-1,1) -- (.1,1);
		\draw (.1,1) -- (1,1);
		\draw[->] (-1,-1) -- (.1,-1);
		\draw (.1,-1) -- (1,-1);
		\draw (.5,.5) circle (.1);
		\draw[thick] (-1,.1) -- (-.9,.1);
		\draw[thick] (-1,-.1) -- (-.9,-.1);
		\draw[thick] (-.6,.1) -- (-.3,.1);
		\draw[thick] (-.6,-.1) -- (.1,-.1);
		\draw[thick] (-.1,.1) -- (.1,.1);
		\draw[thick] (.4,.1) -- (1,.1);
		\draw[thick] (.4,-.1) -- (.7,-.1);
		\draw[thick] (.9,-.1) -- (1,-.1);
		\draw[thick] (-.3,.1) -- (-.3,.3);
		\draw[thick] (-.1,.1) -- (-.1,.3);
		\draw[thick] (-.3,.6) -- (0,1);
		\draw[thick] (-.1,.6) -- (.2,1);
		\draw[thick] (.7,-.1) -- (0,-1);
		\draw[thick] (.9,-.1) -- (.2,-1);
		\draw[thick] (-.9,.1) -- (-.6,.1);
		\draw[thick] (-.9,-.1) -- (-.6,-.1);
		\draw[thick] (-.3,.3) -- (-.3,.6);
		\draw[thick] (-.1,.3) -- (-.1,.6);
		\draw[thick] (.1,.1) -- (.4,.1);
		\draw[thick] (.1,-.1) -- (.4,-.1);
}}
\def\jwktildeone{\tikz[baseline=0]{
		\draw[->>] (-1,-1) -- (-1,.1);
		\draw (-1,.1) -- (-1,1);
		\draw[->>] (1,-1) -- (1,.1);
		\draw (1,.1) -- (1,1);
		\draw[->] (-1,1) -- (.1,1);
		\draw (.1,1) -- (1,1);
		\draw[->] (-1,-1) -- (.1,-1);
		\draw (.1,-1) -- (1,-1);
		\draw (.5,.5) circle (.1);
		\draw[thick] (-1,.1) -- (-.9,.1);
		\draw[thick] (-1,-.1) -- (-.9,-.1);
		\draw[thick] (-.6,.1) -- (-.3,.1);
		\draw[thick] (-.6,-.1) -- (.1,-.1);
		\draw[thick] (-.1,.1) -- (.1,.1);
		\draw[thick] (.4,.1) -- (1,.1);
		\draw[thick] (.4,-.1) -- (.7,-.1);
		\draw[thick] (.9,-.1) -- (1,-.1);
		\draw[thick] (-.3,.1) -- (-.3,.3);
		\draw[thick] (-.1,.1) -- (-.1,.3);
		\draw[thick] (-.3,.6) -- (0,1);
		\draw[thick] (-.1,.6) -- (.2,1);
		\draw[thick] (.7,-.1) -- (0,-1);
		\draw[thick] (.9,-.1) -- (.2,-1);
		\draw[thick] (-.9,.1) -- (-.6,.1);
		\draw[thick] (-.9,-.1) -- (-.6,-.1);
		\draw[thick] (-.3,.3) -- (-.3,.6);
		\draw[thick] (-.1,.3) -- (-.1,.6);
		\draw[thick] (.1,.1) arc[start angle = 90, end angle = -90, radius = .1];
		\draw[thick] (.4,.1) arc[start angle = 90, end angle = 270, radius = .1];
}}
\def\jwktildetwo{\tikz[baseline=0]{
		\draw[->>] (-1,-1) -- (-1,.1);
		\draw (-1,.1) -- (-1,1);
		\draw[->>] (1,-1) -- (1,.1);
		\draw (1,.1) -- (1,1);
		\draw[->] (-1,1) -- (.1,1);
		\draw (.1,1) -- (1,1);
		\draw[->] (-1,-1) -- (.1,-1);
		\draw (.1,-1) -- (1,-1);
		\draw (.5,.5) circle (.1);
		\draw[thick] (-1,.1) -- (-.9,.1);
		\draw[thick] (-1,-.1) -- (-.9,-.1);
		\draw[thick] (-.6,.1) -- (-.3,.1);
		\draw[thick] (-.6,-.1) -- (.1,-.1);
		\draw[thick] (-.1,.1) -- (.1,.1);
		\draw[thick] (.4,.1) -- (1,.1);
		\draw[thick] (.4,-.1) -- (.7,-.1);
		\draw[thick] (.9,-.1) -- (1,-.1);
		\draw[thick] (-.3,.1) -- (-.3,.3);
		\draw[thick] (-.1,.1) -- (-.1,.3);
		\draw[thick] (-.3,.6) -- (0,1);
		\draw[thick] (-.1,.6) -- (.2,1);
		\draw[thick] (.7,-.1) -- (0,-1);
		\draw[thick] (.9,-.1) -- (.2,-1);
		\draw[thick] (-.9,.1) -- (-.6,.1);
		\draw[thick] (-.9,-.1) -- (-.6,-.1);
		\draw[thick] (-.3,.3) arc[start angle = 180, end angle = 0, radius = .1];
		\draw[thick] (-.3,.6) arc[start angle = 180, end angle = 360, radius = .1];
		\draw[thick] (.1,.1) -- (.4,.1);
		\draw[thick] (.1,-.1) -- (.4,-.1);
}}
\def\jwktildethree{\tikz[baseline=0]{
		\draw[->>] (-1,-1) -- (-1,.1);
		\draw (-1,.1) -- (-1,1);
		\draw[->>] (1,-1) -- (1,.1);
		\draw (1,.1) -- (1,1);
		\draw[->] (-1,1) -- (.1,1);
		\draw (.1,1) -- (1,1);
		\draw[->] (-1,-1) -- (.1,-1);
		\draw (.1,-1) -- (1,-1);
		\draw (.5,.5) circle (.1);
		\draw[thick] (-1,.1) -- (-.9,.1);
		\draw[thick] (-1,-.1) -- (-.9,-.1);
		\draw[thick] (-.6,.1) -- (-.3,.1);
		\draw[thick] (-.6,-.1) -- (.1,-.1);
		\draw[thick] (-.1,.1) -- (.1,.1);
		\draw[thick] (.4,.1) -- (1,.1);
		\draw[thick] (.4,-.1) -- (.7,-.1);
		\draw[thick] (.9,-.1) -- (1,-.1);
		\draw[thick] (-.3,.1) -- (-.3,.3);
		\draw[thick] (-.1,.1) -- (-.1,.3);
		\draw[thick] (-.3,.6) -- (0,1);
		\draw[thick] (-.1,.6) -- (.2,1);
		\draw[thick] (.7,-.1) -- (0,-1);
		\draw[thick] (.9,-.1) -- (.2,-1);
		\draw[thick] (-.9,.1) -- (-.6,.1);
		\draw[thick] (-.9,-.1) -- (-.6,-.1);
		\draw[thick] (-.3,.3) arc[start angle = 180, end angle = 0, radius = .1];
		\draw[thick] (-.3,.6) arc[start angle = 180, end angle = 360, radius = .1];
		\draw[thick] (.1,.1) arc[start angle = 90, end angle = -90, radius = .1];
		\draw[thick] (.4,.1) arc[start angle = 90, end angle = 270, radius = .1];
}}
\def\jwktildefour{\tikz[baseline=0]{
		\draw[->>] (-1,-1) -- (-1,.1);
		\draw (-1,.1) -- (-1,1);
		\draw[->>] (1,-1) -- (1,.1);
		\draw (1,.1) -- (1,1);
		\draw[->] (-1,1) -- (.1,1);
		\draw (.1,1) -- (1,1);
		\draw[->] (-1,-1) -- (.1,-1);
		\draw (.1,-1) -- (1,-1);
		\draw (.5,.5) circle (.1);
		\draw[thick] (-1,.1) -- (-.9,.1);
		\draw[thick] (-1,-.1) -- (-.9,-.1);
		\draw[thick] (-.6,.1) -- (-.3,.1);
		\draw[thick] (-.6,-.1) -- (.1,-.1);
		\draw[thick] (-.1,.1) -- (.1,.1);
		\draw[thick] (.4,.1) -- (1,.1);
		\draw[thick] (.4,-.1) -- (.7,-.1);
		\draw[thick] (.9,-.1) -- (1,-.1);
		\draw[thick] (-.3,.1) -- (-.3,.3);
		\draw[thick] (-.1,.1) -- (-.1,.3);
		\draw[thick] (-.3,.6) -- (0,1);
		\draw[thick] (-.1,.6) -- (.2,1);
		\draw[thick] (.7,-.1) -- (0,-1);
		\draw[thick] (.9,-.1) -- (.2,-1);
		\draw[thick] (-.9,.1) arc[start angle = 90, end angle = -90, radius = .1];
		\draw[thick] (-.6,.1) arc[start angle = 90, end angle = 270, radius = .1];
		\draw[thick] (-.3,.3) -- (-.3,.6);
		\draw[thick] (-.1,.3) -- (-.1,.6);
		\draw[thick] (.1,.1) -- (.4,.1);
		\draw[thick] (.1,-.1) -- (.4,-.1);
}}
\def\jwktildefive{\tikz[baseline=0]{
		\draw[->>] (-1,-1) -- (-1,.1);
		\draw (-1,.1) -- (-1,1);
		\draw[->>] (1,-1) -- (1,.1);
		\draw (1,.1) -- (1,1);
		\draw[->] (-1,1) -- (.1,1);
		\draw (.1,1) -- (1,1);
		\draw[->] (-1,-1) -- (.1,-1);
		\draw (.1,-1) -- (1,-1);
		\draw (.5,.5) circle (.1);
		\draw[thick] (-1,.1) -- (-.9,.1);
		\draw[thick] (-1,-.1) -- (-.9,-.1);
		\draw[thick] (-.6,.1) -- (-.3,.1);
		\draw[thick] (-.6,-.1) -- (.1,-.1);
		\draw[thick] (-.1,.1) -- (.1,.1);
		\draw[thick] (.4,.1) -- (1,.1);
		\draw[thick] (.4,-.1) -- (.7,-.1);
		\draw[thick] (.9,-.1) -- (1,-.1);
		\draw[thick] (-.3,.1) -- (-.3,.3);
		\draw[thick] (-.1,.1) -- (-.1,.3);
		\draw[thick] (-.3,.6) -- (0,1);
		\draw[thick] (-.1,.6) -- (.2,1);
		\draw[thick] (.7,-.1) -- (0,-1);
		\draw[thick] (.9,-.1) -- (.2,-1);
		\draw[thick] (-.9,.1) arc[start angle = 90, end angle = -90, radius = .1];
		\draw[thick] (-.6,.1) arc[start angle = 90, end angle = 270, radius = .1];
		\draw[thick] (-.3,.3) -- (-.3,.6);
		\draw[thick] (-.1,.3) -- (-.1,.6);
		\draw[thick] (.1,.1) arc[start angle = 90, end angle = -90, radius = .1];
		\draw[thick] (.4,.1) arc[start angle = 90, end angle = 270, radius = .1];
}}
\def\jwktildesix{\tikz[baseline=0]{
		\draw[->>] (-1,-1) -- (-1,.1);
		\draw (-1,.1) -- (-1,1);
		\draw[->>] (1,-1) -- (1,.1);
		\draw (1,.1) -- (1,1);
		\draw[->] (-1,1) -- (.1,1);
		\draw (.1,1) -- (1,1);
		\draw[->] (-1,-1) -- (.1,-1);
		\draw (.1,-1) -- (1,-1);
		\draw (.5,.5) circle (.1);
		\draw[thick] (-1,.1) -- (-.9,.1);
		\draw[thick] (-1,-.1) -- (-.9,-.1);
		\draw[thick] (-.6,.1) -- (-.3,.1);
		\draw[thick] (-.6,-.1) -- (.1,-.1);
		\draw[thick] (-.1,.1) -- (.1,.1);
		\draw[thick] (.4,.1) -- (1,.1);
		\draw[thick] (.4,-.1) -- (.7,-.1);
		\draw[thick] (.9,-.1) -- (1,-.1);
		\draw[thick] (-.3,.1) -- (-.3,.3);
		\draw[thick] (-.1,.1) -- (-.1,.3);
		\draw[thick] (-.3,.6) -- (0,1);
		\draw[thick] (-.1,.6) -- (.2,1);
		\draw[thick] (.7,-.1) -- (0,-1);
		\draw[thick] (.9,-.1) -- (.2,-1);
		\draw[thick] (-.9,.1) arc[start angle = 90, end angle = -90, radius = .1];
		\draw[thick] (-.6,.1) arc[start angle = 90, end angle = 270, radius = .1];
		\draw[thick] (-.3,.3) arc[start angle = 180, end angle = 0, radius = .1];
		\draw[thick] (-.3,.6) arc[start angle = 180, end angle = 360, radius = .1];
		\draw[thick] (.1,.1) -- (.4,.1);
		\draw[thick] (.1,-.1) -- (.4,-.1);
}}
\def\jwktildeseven{\tikz[baseline=0]{
		\draw[->>] (-1,-1) -- (-1,.1);
		\draw (-1,.1) -- (-1,1);
		\draw[->>] (1,-1) -- (1,.1);
		\draw (1,.1) -- (1,1);
		\draw[->] (-1,1) -- (.1,1);
		\draw (.1,1) -- (1,1);
		\draw[->] (-1,-1) -- (.1,-1);
		\draw (.1,-1) -- (1,-1);
		\draw (.5,.5) circle (.1);
		\draw[thick] (-1,.1) -- (-.9,.1);
		\draw[thick] (-1,-.1) -- (-.9,-.1);
		\draw[thick] (-.6,.1) -- (-.3,.1);
		\draw[thick] (-.6,-.1) -- (.1,-.1);
		\draw[thick] (-.1,.1) -- (.1,.1);
		\draw[thick] (.4,.1) -- (1,.1);
		\draw[thick] (.4,-.1) -- (.7,-.1);
		\draw[thick] (.9,-.1) -- (1,-.1);
		\draw[thick] (-.3,.1) -- (-.3,.3);
		\draw[thick] (-.1,.1) -- (-.1,.3);
		\draw[thick] (-.3,.6) -- (0,1);
		\draw[thick] (-.1,.6) -- (.2,1);
		\draw[thick] (.7,-.1) -- (0,-1);
		\draw[thick] (.9,-.1) -- (.2,-1);
		\draw[thick] (-.9,.1) arc[start angle = 90, end angle = -90, radius = .1];
		\draw[thick] (-.6,.1) arc[start angle = 90, end angle = 270, radius = .1];
		\draw[thick] (-.3,.3) arc[start angle = 180, end angle = 0, radius = .1];
		\draw[thick] (-.3,.6) arc[start angle = 180, end angle = 360, radius = .1];
		\draw[thick] (.1,.1) arc[start angle = 90, end angle = -90, radius = .1];
		\draw[thick] (.4,.1) arc[start angle = 90, end angle = 270, radius = .1];
}}

\def\hexpunctured{\tikz[baseline=0]{
		\draw[->>] (-1,-1) -- (-1,.1);
		\draw (-1,.1) -- (-1,1);
		\draw[->>] (1,-1) -- (1,.1);
		\draw (1,.1) -- (1,1);
		\draw[->] (-1,1) -- (.1,1);
		\draw (.1,1) -- (1,1);
		\draw[->] (-1,-1) -- (.1,-1);
		\draw (.1,-1) -- (1,-1);
		\draw (.5,.5) circle (.1);
		\draw[thick] (-1,0) -- (1,0);
		\draw[thick] (-.2,0) -- (0,1);
		\draw[thick] (0,-1) -- (.2,0);
}}
\def\ranktwomonogonpunctured{\tikz[baseline=0]{
		\draw[->>] (-1,-1) -- (-1,.1);
		\draw (-1,.1) -- (-1,1);
		\draw[->>] (1,-1) -- (1,.1);
		\draw (1,.1) -- (1,1);
		\draw[->] (-1,1) -- (.1,1);
		\draw (.1,1) -- (1,1);
		\draw[->] (-1,-1) -- (.1,-1);
		\draw (.1,-1) -- (1,-1);
		\draw[thick] (-1,0) -- (1,0);
		\draw[thick] (-.2,0) -- (0,1);
		\draw[thick] (0,-1) -- (.06,-.7);
		\draw (.1,-.5) circle (.1);
		\draw[thick] (.1,-.5) circle (.2);
		\draw[thick] (.14,-.3) -- (.2,0);
		\draw[thick] (.4,0) -- (.4,.2);
		\draw[thick] (.8,0) -- (.8,.2);
		\draw[thick] (.4,.3) circle (.1);
		\draw[thick] (.8,.3) circle (.1);
		\draw[fill] (.55,.3) circle (.01);
		\draw[fill] (.6,.3) circle (.01);
		\draw[fill] (.65,.3) circle (.01);
		\node at (.6,.5) {$\overbrace{\begin{tikzpicture} \path (-.3,.3); \end{tikzpicture}}^{\tiny{n - 2}}$};
}}

\def\modifiedih{\tikz[baseline=0]{
		\draw[dashed] (0,0) circle (1);
		\draw (0,0) circle (.1);
		\draw[thick] (0,0) circle (.2);
		\draw[thick] (-.6,0) -- (-.2,0);
		\draw[thick] (.2,0) -- (.6,0);
		\draw[thick] (-.6,0) -- (-.9,.3);
		\draw[thick] (-.6,0) -- (-.9,-.3);
		\draw[thick] (.6,0) -- (.9,.3);
		\draw[thick] (.6,0) -- (.9,-.3);
		\draw[dotted] (-.4,0) ellipse (.3 and .2);
}}
\def\modifiedihi{\tikz[baseline=0]{
		\draw[dashed] (0,0) circle (1);
		\draw (0,0) circle (.1);
		\draw[thick] (0,0) circle (.2);
		\draw[thick] (-.141,.141) -- (-.9,.3);
		\draw[thick] (-.141,-.141) -- (-.9,-.3);
		\draw[thick] (.2,0) -- (.6,0);
		\draw[thick] (.6,0) -- (.9,.3);
		\draw[thick] (.6,0) -- (.9,-.3);
		\draw[dotted] (.4,0) ellipse (.3 and .2);
}}
\def\modifiedihii{\tikz[baseline=0]{
		\draw[dashed] (0,0) circle (1);
		\draw (0,0) circle (.1);
		\draw[thick] (0,0) circle (.2);
		\draw[thick] (-.141,.141) -- (-.9,.3);
		\draw[thick] (-.141,-.141) -- (-.9,-.3);
		\draw[thick] (.141,.141) -- (.9,.3);
		\draw[thick] (.141,-.141) -- (.9,-.3);
		\draw[dotted] (0,.15) ellipse (.2 and .1);
}}
\def\modifiedihiii{\tikz[baseline=0]{
		\draw[dashed] (0,0) circle (1);
		\draw (0,0) circle (.1);
		\draw[thick] (0,0) circle (.2);
		\draw[thick] (0,.2) -- (0,.3);
		\draw[thick] (-.9,.3) -- (.9,.3);
		\draw[thick] (-.141,-.141) -- (-.9,-.3);
		\draw[thick] (.141,-.141) -- (.9,-.3);
		\draw[dotted] (0,-.15) ellipse (.2 and .1);
}}
\def\modifiedihiv{\tikz[baseline=0]{
		\draw[dashed] (0,0) circle (1);
		\draw (0,0) circle (.1);
		\draw[thick] (0,0) circle (.2);
		\draw[thick] (0,.2) -- (0,.3);
		\draw[thick] (-.9,.3) -- (.9,.3);
		\draw[thick] (0,-.2) -- (0,-.3);
		\draw[thick] (-.9,-.3) -- (.9,-.3);
}}
\def\bigonhexpuncturedi{\tikz[baseline=0]{
		\draw[->>] (-1,-1) -- (-1,.1);
		\draw (-1,.1) -- (-1,1);
		\draw[->>] (1,-1) -- (1,.1);
		\draw (1,.1) -- (1,1);
		\draw[->] (-1,1) -- (.1,1);
		\draw (.1,1) -- (1,1);
		\draw[->] (-1,-1) -- (.1,-1);
		\draw (.1,-1) -- (1,-1);
		\draw[thick] (-1,0) -- (1,0);
		\draw[thick] (-.2,0) -- (0,1);
		\draw[thick] (0,-1) -- (.06,-.7);
		\draw (.1,-.5) circle (.1);
		\draw[thick] (.1,-.5) circle (.2);
		\draw[thick] (.14,-.3) -- (.2,0);
		\draw[dotted] (.17,-.15) ellipse (.1 and .2);
}}
\def\bigonhexpuncturedii{\tikz[baseline=0]{
		\draw[->>] (-1,-1) -- (-1,.1);
		\draw (-1,.1) -- (-1,1);
		\draw[->>] (1,-1) -- (1,.1);
		\draw (1,.1) -- (1,1);
		\draw[->] (-1,1) -- (.1,1);
		\draw (.1,1) -- (1,1);
		\draw[->] (-1,-1) -- (.1,-1);
		\draw (.1,-1) -- (1,-1);
		\draw (.2,0) circle (.1);
		\draw[thick] (.2,0) circle (.2);
		\draw[thick] (-1,0) -- (0,0);
		\draw[thick] (.4,0) -- (1,0);
		\draw[thick] (-.2,0) -- (0,1);
		\draw[thick] (.2,-.2) -- (0,-1);
		\draw[dotted] plot [smooth, tension=1] coordinates {(-.1,1) (-.22,-.1) (.1,1)};
		\draw[dotted] plot [smooth, tension=1] coordinates {((-.1,-1) (.22,-.1) (.1,-1)};
}}
\def\bigonhexpuncturediii{\tikz[baseline=0]{
		\draw[->>] (-1,-1) -- (-1,.1);
		\draw (-1,.1) -- (-1,1);
		\draw[->>] (1,-1) -- (1,.1);
		\draw (1,.1) -- (1,1);
		\draw[->] (-1,1) -- (.1,1);
		\draw (.1,1) -- (1,1);
		\draw[->] (-1,-1) -- (.1,-1);
		\draw (.1,-1) -- (1,-1);
		\draw (.25,-.4) circle (.1);
		\draw[thick] (-1,0) -- (1,0);
		\draw[thick] (-.6,0) -- (-.2,1);
		\draw[thick] (-.2,0) -- (.2,1);
		\draw[thick] (-.2,-1) -- (.2,0);
		\draw[thick] (.2,-1) -- (.6,0);
		\draw[dotted] (0,0) ellipse (.3 and .2);
}}
\def\bigonhexpuncturediv{\tikz[baseline=0]{
		\draw[->>] (-1,-1) -- (-1,.1);
		\draw (-1,.1) -- (-1,1);
		\draw[->>] (1,-1) -- (1,.1);
		\draw (1,.1) -- (1,1);
		\draw[->] (-1,1) -- (.1,1);
		\draw (.1,1) -- (1,1);
		\draw[->] (-1,-1) -- (.1,-1);
		\draw (.1,-1) -- (1,-1);
		\draw (.15,-.4) circle (.1);
		\draw[thick] (-1,0) -- (1,0);
		\draw[thick] (-.6,0) -- (-.4,1);
		\draw[thick] (-.4,-1) -- (-.2,0);
		\draw[thick] (.2,0) -- (.4,1);
		\draw[thick] (.4,-1) -- (.6,0);
		\draw[dotted] plot [smooth, tension=1] coordinates {(-1,.1) (-.5,0) (-1,-.1)};
		\draw[dotted] plot [smooth, tension=1] coordinates {(1,.1) (.5,0) (1,-.1)};
}}
\def\bigonhexpuncturedv{\tikz[baseline=0]{
		\draw[->>] (-1,-1) -- (-1,.1);
		\draw (-1,.1) -- (-1,1);
		\draw[->>] (1,-1) -- (1,.1);
		\draw (1,.1) -- (1,1);
		\draw[->] (-1,1) -- (.1,1);
		\draw (.1,1) -- (1,1);
		\draw[->] (-1,-1) -- (.1,-1);
		\draw (.1,-1) -- (1,-1);
		\draw (-.1,-.4) circle (.1);
		\draw[thick] (-1,0) -- (1,0);
		\draw[thick] (-.1,0) -- (.1,1);
		\draw[thick] (.1,-1) -- (.3,0);
		\draw[thick] (.1,0) -- (.9,1);
		\draw[thick] (.9,-1) -- (1,-.875);
		\draw[thick] (-1,-.875) -- (-.3,0);
		\draw[dotted] plot [smooth, tension=1] coordinates {(-1,.1) (-.2,0) (-1,-.1)};
		\draw[dotted] plot [smooth, tension=1] coordinates {(1,.1) (.2,0) (1,-.1)};
}}
\def\bigonhexpuncturedvi{\tikz[baseline=0]{
		\draw[->>] (-1,-1) -- (-1,.1);
		\draw (-1,.1) -- (-1,1);
		\draw[->>] (1,-1) -- (1,.1);
		\draw (1,.1) -- (1,1);
		\draw[->] (-1,1) -- (.1,1);
		\draw (.1,1) -- (1,1);
		\draw[->] (-1,-1) -- (.1,-1);
		\draw (.1,-1) -- (1,-1);
		\draw (0,-.4) circle (.1);
		\draw[thick] (-1,.1) -- (1,.1);
		\draw[thick] (-.5,.1) -- (-.5,-.1);
		\draw[thick] (-.3,.1) -- (.6,1);
		\draw[thick] (.6,-1) -- (1,-.6);
		\draw[thick] (-1,-.6) -- (-.5,-.1);
		\draw[thick] (.3,.1) -- (.9,1);
		\draw[thick] (.9,-1) -- (1,-.85);
		\draw[thick] (-1,-.85) -- (-.5,-.1);
}}
\def\bigonhexpuncturedvii{\tikz[baseline=0]{
		\draw[->>] (-1,-1) -- (-1,.1);
		\draw (-1,.1) -- (-1,1);
		\draw[->>] (1,-1) -- (1,.1);
		\draw (1,.1) -- (1,1);
		\draw[->] (-1,1) -- (.1,1);
		\draw (.1,1) -- (1,1);
		\draw[->] (-1,-1) -- (.1,-1);
		\draw (.1,-1) -- (1,-1);
		\draw (0,-.4) circle (.1);
		\draw[thick] (-1,0) -- (1,0);
		\draw[thick] (0,0) -- (.8,1);
		\draw[thick] (.8,-1) -- (1,-.75);
		\draw[thick] (-1,-.75) -- (-.4,0);
}}

\def\pgi{\tikz[baseline=0]{
		\draw[dashed] (0,0) circle (1);
		\draw[thick] (-.3,-.95) -- (0,-.65);
		\draw[thick] (.3,-.95) -- (0,-.65);
		\node at (0,-1.25) {$G_{1}$};
        \draw[fill] (0,-.4) circle (.02);
        \draw[fill] (0,-.65) circle (.02);
}}
\def\pgii{\tikz[baseline=0]{
		\draw[dashed] (0,0) circle (1);
		\draw[thick] (-.3,-.95) -- (0,-.65);
		\draw[thick] (.3,-.95) -- (0,-.65);
		\draw[thick] (0,0) circle (.4);
		\node at (0,-1.25) {$G_{2}$};
        \draw[fill] (0,-.4) circle (.02);
        \draw[fill] (0,-.65) circle (.02);
}}
\def\pgiii{\tikz[baseline=0]{
		\draw[dashed] (0,0) circle (1);
		\draw[thick] (-.3,-.95) -- (0,-.65);
		\draw[thick] (.3,-.95) -- (0,-.65);
		\draw[thick] (0,-.65) -- (0,-.4);
		\node at (0,-1.25) {$G_{3}$};
        \draw[fill] (0,-.4) circle (.02);
        \draw[fill] (0,-.65) circle (.02);
}}
\def\pgiv{\tikz[baseline=0]{
		\draw[dashed] (0,0) circle (1);
		\draw[thick] (-.3,-.95) -- (0,-.65);
		\draw[thick] (.3,-.95) -- (0,-.65);
		\draw[thick] (0,-.65) -- (0,-.4);
		\draw[thick] (0,0) circle (.4);
		\node at (0,-1.25) {$G_{4}$};
        \draw[fill] (0,-.4) circle (.02);
        \draw[fill] (0,-.65) circle (.02);
}}
\def\woikoi{\tikz[baseline=0]{
		\draw[->>] (-1,-1) -- (-1,.1);
		\draw (-1,.1) -- (-1,1);
		\draw[->>] (1,-1) -- (1,.1);
		\draw (1,.1) -- (1,1);
		\draw[->] (-1,1) -- (.1,1);
		\draw (.1,1) -- (1,1);
		\draw[->] (-1,-1) -- (.1,-1);
		\draw (.1,-1) -- (1,-1);
		\draw[thick] (-1,0) -- (1,0);
		\draw[thick] (-.2,0) -- (0,1);
		\draw[thick] (0,-1) -- (.2,0);
		\draw[thick] (-1,.5) -- (-.2,.5);
		\draw[thick] (0,.5) -- (1,.5);
}}
\def\woikoione{\tikz[baseline=0]{
		\draw[->>] (-1,-1) -- (-1,.1);
		\draw (-1,.1) -- (-1,1);
		\draw[->>] (1,-1) -- (1,.1);
		\draw (1,.1) -- (1,1);
		\draw[->] (-1,1) -- (.1,1);
		\draw (.1,1) -- (1,1);
		\draw[->] (-1,-1) -- (.1,-1);
		\draw (.1,-1) -- (1,-1);
		\draw[thick] (-1,0) -- (1,0);
		\draw[thick] (-.2,0) -- (-.2,.4);
		\draw[thick] (-.2,.4) arc[start angle = 0, end angle = 90, radius = .1];
		\draw[thick] (-1,.5) -- (-.3,.5);
		\draw[thick] (.2,0) -- (.2,-1);
		\draw[thick] (.2,1) -- (.2,.6);
		\draw[thick] (.2,.6) arc[start angle = 180, end angle = 270, radius = .1];
		\draw[thick] (.3,.5) -- (1,.5);
}}
\def\woikoitwo{\tikz[baseline=0]{
		\draw[->>] (-1,-1) -- (-1,.1);
		\draw (-1,.1) -- (-1,1);
		\draw[->>] (1,-1) -- (1,.1);
		\draw (1,.1) -- (1,1);
		\draw[->] (-1,1) -- (.1,1);
		\draw (.1,1) -- (1,1);
		\draw[->] (-1,-1) -- (.1,-1);
		\draw (.1,-1) -- (1,-1);
		\draw[thick] (-1,0) -- (1,0);
		\draw[thick] (.2,0) -- (.2,-1);
		\draw[thick] (-.2,0) -- (-.2,.2);
		\draw[thick] (.2,1) -- (.2,.8);
		\draw[thick] (-.2,.2) arc[start angle = 180, end angle = 100, radius = .1];
		\draw[thick] (-.15,.29) -- (1,.5);
		\draw[thick] (-1,.5) -- (.15,.71);
		\draw[thick] (.2,.8) arc[start angle = 0, end angle = -80, radius = .1];
}}
\def\woikoithree{\tikz[baseline=0]{
		\draw[->>] (-1,-1) -- (-1,.1);
		\draw (-1,.1) -- (-1,1);
		\draw[->>] (1,-1) -- (1,.1);
		\draw (1,.1) -- (1,1);
		\draw[->] (-1,1) -- (.1,1);
		\draw (.1,1) -- (1,1);
		\draw[->] (-1,-1) -- (.1,-1);
		\draw (.1,-1) -- (1,-1);
		\draw[thick] (-1,.2) -- (1,.2);
		\draw[thick] (-1,.-.2) -- (1,-.2);
		\draw[thick] (0,-.2) -- (0,.2);
		\draw[thick] (-.2,.2) -- (0,1);
		\draw[thick] (0,-1) -- (.2,-.2);
}}
\def\woikoifour{\tikz[baseline=0]{
		\draw[->>] (-1,-1) -- (-1,.1);
		\draw (-1,.1) -- (-1,1);
		\draw[->>] (1,-1) -- (1,.1);
		\draw (1,.1) -- (1,1);
		\draw[->] (-1,1) -- (.1,1);
		\draw (.1,1) -- (1,1);
		\draw[->] (-1,-1) -- (.1,-1);
		\draw (.1,-1) -- (1,-1);
		\draw[thick] (-1,.2) -- (-.4,.2);
		\draw[thick] (1,.2) -- (.4,.2);
		\draw[thick] (-1,-.2) -- (-.4,-.2);
		\draw[thick] (1,-.2) -- (.4,-.2);
		\draw[thick] (-.2,0) -- (-.4,.2);
		\draw[thick] (-.2,0) -- (-.4,-.2);
		\draw[thick] (.2,0) -- (.4,.2);
		\draw[thick] (.2,0) -- (.4,-.2);
		\draw[thick] (-.2,0) -- (.2,0);
		\draw[thick] (-.4,.2) -- (0,1);
		\draw[thick] (0,-1) -- (.4,-.2);
}}
\def\woikoixone{\tikz[baseline=0]{
		\draw[->>] (-1,-1) -- (-1,.1);
		\draw (-1,.1) -- (-1,1);
		\draw[->>] (1,-1) -- (1,.1);
		\draw (1,.1) -- (1,1);
		\draw[->] (-1,1) -- (.1,1);
		\draw (.1,1) -- (1,1);
		\draw[->] (-1,-1) -- (.1,-1);
		\draw (.1,-1) -- (1,-1);
		\draw[thick] (-1,.2) -- (1,.2);
		\draw[thick] (-1,.-.2) -- (1,-.2);
		\draw[thick] (-.2,.2) -- (0,1);
		\draw[thick] (0,-1) -- (.2,-.2);
}}
\def\woikoixtwo{\tikz[baseline=0]{
		\draw[->>] (-1,-1) -- (-1,.1);
		\draw (-1,.1) -- (-1,1);
		\draw[->>] (1,-1) -- (1,.1);
		\draw (1,.1) -- (1,1);
		\draw[->] (-1,1) -- (.1,1);
		\draw (.1,1) -- (1,1);
		\draw[->] (-1,-1) -- (.1,-1);
		\draw (.1,-1) -- (1,-1);
		\draw[thick] (-.4,.2) arc[start angle = 90, end angle = -90, radius = .2];
		\draw[thick] (.4,-.2) arc[start angle = 270, end angle = 90, radius = .2];
		\draw[thick] (-.4,.2) -- (-1,.2);
		\draw[thick] (-.4,-.2) -- (-1,-.2);
		\draw[thick] (.4,.2) -- (1,.2);
		\draw[thick] (.4,-.2) -- (1,-.2);
		\draw[thick] (-.4,.2) -- (0,1);
		\draw[thick] (.4,-.2) -- (0,-1);
}}
\def\woikoifive{\tikz[baseline=0]{
		\draw[->>] (-1,-1) -- (-1,.1);
		\draw (-1,.1) -- (-1,1);
		\draw[->>] (1,-1) -- (1,.1);
		\draw (1,.1) -- (1,1);
		\draw[->] (-1,1) -- (.1,1);
		\draw (.1,1) -- (1,1);
		\draw[->] (-1,-1) -- (.1,-1);
		\draw (.1,-1) -- (1,-1);
		\draw[thick] (-1,.2) -- (-.4,.2);
		\draw[thick] (1,.2) -- (.4,.2);
		\draw[thick] (-1,-.2) -- (-.4,-.2);
		\draw[thick] (1,-.2) -- (.4,-.2);
		\draw[thick] (-.2,0) -- (-.4,.2);
		\draw[thick] (-.2,0) -- (-.4,-.2);
		\draw[thick] (.2,0) -- (.4,.2);
		\draw[thick] (.2,0) -- (.4,-.2);
		\draw[thick] (-.2,0) -- (.2,0);
		\draw[thick] (0,0) -- (.2,1);
		\draw[thick] (.2,-1) -- (.4,-.2);
}}
\def\woikoixthree{\tikz[baseline=0]{
		\draw[->>] (-1,-1) -- (-1,.1);
		\draw (-1,.1) -- (-1,1);
		\draw[->>] (1,-1) -- (1,.1);
		\draw (1,.1) -- (1,1);
		\draw[->] (-1,1) -- (.1,1);
		\draw (.1,1) -- (1,1);
		\draw[->] (-1,-1) -- (.1,-1);
		\draw (.1,-1) -- (1,-1);
		\draw[thick] (0,-1) -- (0,-.2);
		\draw[thick] (0,0) circle (.2);
		\draw[thick] (0,1) -- (0,.2);
}}
\def\woiiioi{\tikz[baseline=0]{
		\draw[->>] (-1,-1) -- (-1,.1);
		\draw (-1,.1) -- (-1,1);
		\draw[->>] (1,-1) -- (1,.1);
		\draw (1,.1) -- (1,1);
		\draw[->] (-1,1) -- (.1,1);
		\draw (.1,1) -- (1,1);
		\draw[->] (-1,-1) -- (.1,-1);
		\draw (.1,-1) -- (1,-1);
		\draw[thick] (-1,-.5) -- (.5,1);
		\draw[thick] (.5,-1) -- (1,-.5);
		\draw[thick] (-1,0) -- (-.6,0);
		\draw[thick] (-.4,0) -- (1,0);
}}
\def\woiio{\tikz[baseline=0]{
		\draw[->>] (-1,-1) -- (-1,.1);
		\draw (-1,.1) -- (-1,1);
		\draw[->>] (1,-1) -- (1,.1);
		\draw (1,.1) -- (1,1);
		\draw[->] (-1,1) -- (.1,1);
		\draw (.1,1) -- (1,1);
		\draw[->] (-1,-1) -- (.1,-1);
		\draw (.1,-1) -- (1,-1);
		\draw[thick] (0,1) -- (0,-1);
}}
\def\woitwoo{\tikz[baseline=0]{
		\draw[->>] (-1,-1) -- (-1,.1);
		\draw (-1,.1) -- (-1,1);
		\draw[->>] (1,-1) -- (1,.1);
		\draw (1,.1) -- (1,1);
		\draw[->] (-1,1) -- (.1,1);
		\draw (.1,1) -- (1,1);
		\draw[->] (-1,-1) -- (.1,-1);
		\draw (.1,-1) -- (1,-1);
		\draw[thick] (-1,-.5) -- (1,.5);
		\draw[thick] (-1,.5) -- (0,1);
		\draw[thick] (0,-1) -- (1,-.5);
}}
\def\woinii{\tikz[baseline=0]{
		\draw[->>] (-1,-1) -- (-1,.1);
		\draw (-1,.1) -- (-1,1);
		\draw[->>] (1,-1) -- (1,.1);
		\draw (1,.1) -- (1,1);
		\draw[->] (-1,1) -- (.1,1);
		\draw (.1,1) -- (1,1);
		\draw[->] (-1,-1) -- (.1,-1);
		\draw (.1,-1) -- (1,-1);
		\draw[thick] (-1,0) -- (0,-1);
		\draw[thick] (0,1) -- (1,0);
}}
\def\woiii{\tikz[baseline=0]{
		\draw[->>] (-1,-1) -- (-1,.1);
		\draw (-1,.1) -- (-1,1);
		\draw[->>] (1,-1) -- (1,.1);
		\draw (1,.1) -- (1,1);
		\draw[->] (-1,1) -- (.1,1);
		\draw (.1,1) -- (1,1);
		\draw[->] (-1,-1) -- (.1,-1);
		\draw (.1,-1) -- (1,-1);
		\draw[thick] (-1,0) -- (0,1);
		\draw[thick] (0,-1) -- (1,0);
}}
\def\woiiooi{\tikz[baseline=0]{
		\draw[->>] (-1,-1) -- (-1,.1);
		\draw (-1,.1) -- (-1,1);
		\draw[->>] (1,-1) -- (1,.1);
		\draw (1,.1) -- (1,1);
		\draw[->] (-1,1) -- (.1,1);
		\draw (.1,1) -- (1,1);
		\draw[->] (-1,-1) -- (.1,-1);
		\draw (.1,-1) -- (1,-1);
		\draw[thick] (-1,0) -- (-.1,0);
		\draw[thick] (1,0) -- (.1,0);
		\draw[thick] (0,-1) -- (0,1);
}}

\author[Ellison]{Cole Ellison}
\address{Department of Mathematics, University of California, Santa Barbara, CA 93106, USA}
\email{coleellison@ucsb.edu}

\author[Higgins]{Vijay Higgins}
\address{Department of Mathematics, University of California, Los Angeles, CA 90095, USA}
\email{higginsv@math.ucla.edu}

\author[Sandey]{Mark Sandey}
\address{Department of Mathematics, University of California, Riverside, CA 92521, USA}
\email{mark.sandey@email.ucr.edu}

\title{$\rm{PGL}_2$ skein algebras of small surfaces}
\author[Sylvester]{Eden Sylvester}
\address{Department of Mathematics, University of California, Santa Barbara, CA 93106, USA}
\email{eeesylvester@gmail.com}

\thanks{
	2020 {\em Mathematics Subject Classification:} Primary 57K31. Secondary 17B37.\\
	{\em Key words and phrases: Skein algebra of a surface, product-to-sum formula, Jones-Wenzl projectors, chromatic skein algebra, Yamada polynomial}}

\begin{document}

\begin{abstract}
We study the $\rm{PGL}_2$ skein algebras of small surfaces, focusing on the closed torus and the once-punctured torus. We provide explicit finite presentations by generators and relations for the $\rm{PGL}_2$ skein algebra of the torus and once-punctured torus, analogous to those of Bullock and Przytycki for the Kauffman bracket skein algebras. We give explicit bases of the $\rm{PGL}_2$ skein algebras of the torus and once-punctured torus and describe the multiplicative structure of the basis for the torus, analogous to the product-to-sum formula of Frohman and Gelca. We twist the basis for the torus to obtain one with structure constants in $\mathbb{N}[q,q^{-1}].$ We compute the kernels of natural maps between the $\rm{PGL}_2$ and Kauffman bracket skein algebras associated to the once-punctured torus and closed torus. We compute the center of the $\rm{PGL}_2$ skein algebra of the torus, which depends on the specialization of the quantum parameter $q.$
\end{abstract}

\maketitle
\tableofcontents

\section{Introduction}

The Tutte polynomial, chromatic polynomial, and Yamada polynomials of trivalent graphs can be described using skein relations that are related to the Kauffman bracket skein relations. The relationship has been useful in studying TQFTs, planarity of graphs, graphs on tori, and symmetries of spatial graphs in works of Agol and Krushkal, Fendley and Krushkal, and Chbili \cite{AK16,FK09,FK10,FK23,Chb12}. The particular skein relations involved are modeled on the subcategory of the Temperley--Lieb category generated by the Jones--Wenzl projector on $2$ strands, and are most often referred to in the literature as either the $\rm{PGL}_2$ skein relations or $\rm{SO}_3$ skein relations. 

The goal of our paper is to find descriptions of the $\rm{PGL}_2$  skein algebras (defined in Section \ref{section PGL2 skein algebra}) of small surfaces analogous to those for the Kauffman bracket found by Frohman-Gelca \cite{FG00} and Bullock-Przytycki \cite{BP00}.

The relationship between $\rm{PGL}_2$ and Kauffman bracket skein modules of 3-manifolds is one focus of a more general Langlands duality conjecture for skein modules \cite{Jor23}, and we believe skein algebras of small surfaces should aid study in this direction. In the study of Kauffman bracket skein modules of 3-manifolds undertaken by Detcherry, Kalfagianni, and Sikora \cite{DKS24}, explicit presentations of the Kauffman bracket skein algebras of small surfaces played a prominent role. In particular, it was a key fact that the Kauffman bracket skein algebras of the 3-punctured sphere and the closed torus have simple explicit descriptions. We hope the analogous algebra presentations for the $\rm{PGL}_2$ skein algebras we present here will be similarly helpful in the study of $\rm{PGL}_2$ skein modules of 3-manifolds.

\subsection{Main results}

Denote by $\Sigma_{g,n}$ the surface obtained by removing $n$ points from a closed oriented genus $g$ surface. Let $\Sq(\Sigma_{g,n})$ and $\Y(\Sigma_{g,n})$ denote the Kauffman bracket and $\rm{PGL}_2$ skein algebras of $\Sigma_{g,n},$ respectively. Throughout the paper, we assume that $q+q^{-1} \neq 0.$ In Section \ref{preliminaries} we recall the relevant definitions and background on Kauffman bracket and $\rm{PGL}_2$ skein algebras. In Section \ref{torus section} we begin with the study of the $\rm{PGL}_2$ skein algebra of the torus, $\Y(\Sigma_{1,0}).$ We first obtain a basis for $\Y(\Sigma_{1,0}),$ which is analogous to the multicurve basis for the Kauffman bracket case except it contains one extra element, which is a web.

\begin{theorem}\label{intro basis torus}(Theorem \ref{Basis torus})
Let $(k,l) \in \Y(\Sigma_{1,0})$ denote the torus curve of slope $k/l$ and let $K \in \Y(\Sigma_{1,0})$ denote the element depicted by the following diagram on $\Sigma_{1,0}$: 
	\[K = \hexagon.\]
	Then
	\[\beta = \{\varnothing, K\} \cup \{(k,l)^{n} : \mathrm{gcd}(k,l) = 1, n \in \mathbb{Z}_{> 0}\}\]
	is a basis of $\Y(\Sigma_{1,0}).$
\end{theorem}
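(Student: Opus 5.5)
We prove spanning and linear independence separately.

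\emph{Spanning.} Every trivalent web on $\Sigma_{1,0}$ reduces, by the $\rm{PGL}_2$ skein relations (removal of trivial loops, monogons, and the bigon, triangle and $I=H$ relations coming from the Jones--Wenzl projector $p_2$), to a linear combination of webs with no contractible faces of length at most $4$. We will also need to eliminate contractible faces of length $5$ or more. For this, we use the square relation to slide edges and reduce the number of vertices bounding a face. An Euler characteristic count on the torus then shows the following: a trivalent graph embedded in $T^2$ all of whose faces are non-contractible, or are disks with at least $6$ sides, has average face degree $6$. This forces every face to be a hexagon, or forces the graph to be a curve system, after removing degenerate faces. Using the $I=H$ relation to perform flip moves, any hexagonal tiling with $2m$ vertices is reduced, modulo lower-vertex terms, to $K$ together with terms having fewer vertices. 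Inducting on the number of vertices, we show that webs with more than two vertices lie in the span of $\beta$; the key input is that the $\theta$-graph on the torus with more vertices contains a contractible square or bigon after suitable flips. Crossingless multicurves are unions of parallel copies $(k,l)^n$ of a single slope, so the span of $\beta$ contains all webs.

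\emph{Independence.} We use the embedding of $\Y$ into $\Sq$ given by replacing each edge with the Jones--Wenzl idempotent $p_2$ on two strands, as in Section~\ref{preliminaries}. This map is well defined and injective on the planar level, and we assume the paper's setup that it is an algebra map $\Y(\Sigma_{1,0})\to\Sq(\Sigma_{1,0})$. Expanding $p_2$ shows that $(k,l)^n$ maps to $(k,l)^{2n}$ plus lower terms in the multicurve filtration of $\Sq(\Sigma_{1,0})$. Similarly, $K$ maps to a combination whose leading terms are curves of total "degree" $1$ in each of three slopes, which smooth to multicurves like $(1,0)(0,1)$ and so on. Using the Frohman--Gelca basis $\{(k,l)_T\}$ and the leading-term behaviour, the images of the $(k,l)^n$ are triangular with distinct leading terms $(2nk,2nl)$. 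For $K$, we compute its image explicitly: it is an integer combination of $\varnothing$ and $(1,0)_T$, $(0,1)_T$, $(1,1)_T$, which have odd coordinates, and so do not lie in the span of the even-leading images. The image of $\varnothing$ is $\varnothing$.

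The main obstacle is the spanning step, namely the combinatorial reduction of arbitrary hexagonal webs on the torus down to $K$. If the injectivity of the map $\Y\to\Sq$ is unavailable, we would instead prove independence by evaluating against a pairing from the chromatic/Yamada evaluation on $T^2\times I$ glued into a solid torus.
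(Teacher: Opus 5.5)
\textbf{Gap: linear independence of $K$.} Your argument fails at $K$. You claim $JW(K)$ is a combination of $\varnothing$ and the odd classes $(1,0)_T,(0,1)_T,(1,1)_T$. This is false. $JW$ replaces every edge by two parallel strands, so its image lies in the even subalgebra spanned by the $(2r,2s)_T$.

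Concretely, expand the three projectors on $K$:
\begin{itemize}
\item the all-identity term is the boundary of a neighborhood of $K$, which is a trivial loop;
\item one cup--cap leaves two parallel copies of a cycle;
\item two or three cup--caps leave only trivial loops.
\end{itemize}
This gives
\[
JW(K)=\frac{1}{[2]}\bigl((1,0)^2+(0,1)^2+(1,1)^2\bigr)-\Bigl([2]+\frac{2}{[2]}\Bigr)\varnothing=\frac{1}{[2]}\,JW\bigl((1,0)+(0,1)+(1,1)-[3]\varnothing\bigr),
\]
using $JW((k,l))=(k,l)^2-\varnothing$ and $[3]=[2]^2-1$. Hence $JW(w)=0$ for $w=[2]K-(0,1)-(1,0)-(1,1)+[3]\varnothing$. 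No leading-term or parity argument in $\Sq(\Sigma_{1,0})$ can separate $K$ from the multicurves, because $JW(K)$ already lies in the span of their images.

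Your treatment of the multicurve part is fine and matches the paper: $JW((k,l)^n)=((k,l)^2-\varnothing)^n$ is triangular in the multicurve basis. What is missing is an invariant of $\Y(\Sigma_{1,0})$ that does not factor through $JW$.

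Your fallback does not supply one. $JW$ commutes with maps induced by embeddings, and it is injective on the solid torus $\Sigma_{0,2}\times I$ by Proposition~\ref{JW injective on pants}. So $w$ maps to $0$ as soon as $T^2\times I$ is glued into a solid torus, and this stays true after adding any web in the complement. Every such evaluation sees $[2]K$ as a combination of multicurves.

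\textbf{The paper's fix.} The paper uses the specialization $\hat P_G(A)$ of the Fendley--Krushkal flow polynomial for torus graphs. It is a signed sum over spanning subgraphs $H$ of $[2]^{2n(H)}A^{s(H)}$, where $s(H)=1$ exactly when $H$ has rank two in $H_1(\Sigma_{1,0})$. It respects the I-H relation by \cite{FK23}, and one checks the monogon and loop relations directly. Then $\hat P_K$ has degree one in $A$, while every multicurve evaluates to a constant, so $K\notin\mathrm{span}(\beta\setminus\{K\})$.

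\textbf{Spanning.} Your flip-based sketch has the same spirit as the paper. The paper uses the Dewdney--Negami theorem that rank-two torus graphs with the same number of vertices are I-H equivalent, and handles rank-one components via $\Y(\Sigma_{0,2})\cong\mathbb{C}[a]$. As written, though, your sketch is loose:
\begin{itemize}
\item the edge-sliding you need is the I-H relation itself; there is no separate square relation;
\item your Euler-characteristic count as stated holds only for cellular embeddings;
\item rank-one webs with vertices are never explicitly reduced.
\end{itemize}
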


We then study the multiplicative structure associated with our basis, giving a product-to-sum formula for the $\rm{PGL}_2$ case which is inspired by the one of Frohman-Gelca \cite{FG00} for the Kauffman bracket case. We first construct a modification of our basis which is inspired by the Chebyshev basis used by Frohman-Gelca. Let $w \in \Y(\Sigma_{1,0})$ be given by $w=[2]K-(0,1)-(1,0)-(1,1)+[3]\varnothing.$ Let $g=\mathrm{gcd}(k,l)$ and let $(k,l)_Y=Y_g((\frac{k}{g},\frac{l}{g}))$ for the polynomial $Y_g$ defined later, in Equation (\ref{Y}). Then it follows from Theorem \ref{intro basis torus} that the set $\{(k,l)_{Y} : k,l \in \mathbb{Z}\} \cup \{w\}$ is a basis for $\Y(\Sigma_{1,0})$. The following theorem provides a complete multiplication table for this basis.

\begin{theorem}(Theorem \ref{torus presentation})
For any $k,l,r,s \in \mathbb{Z}$, the product of basis elements of $\Y(\Sigma_{1,0})$ is determined by the following relations:
	\[\begin{split}
	(k,l)_{Y} \cdot (r,s)_{Y} & = q^{2d}(k + r, l + s)_{Y} + q^{-2d}(k - r, l - s)_{Y} + \alpha w,\\
	(k,l)_{Y} \cdot w & = w \cdot (k,l)_{Y} = (-1)^{g_{1}}2w,\\
	w^{2} & = 2[2]^{2}w,\\
	\end{split}\]
    for scalars $\alpha,d,g_1$ depending on $k,l,r,s$ given in Theorem \ref{torus presentation}.
\end{theorem}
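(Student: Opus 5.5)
The plan is to derive the multiplication table from the Frohman--Gelca product-to-sum formula, using the natural comparison between $\Y(\Sigma_{1,0})$ and the Kauffman bracket skein algebra. The $\rm{PGL}_2$ skein relations come from the Jones--Wenzl projector on two strands. So there is a map $\Y(\Sigma_{1,0})\to \Sq(\Sigma_{1,0})$: replace each $\rm{PGL}_2$ edge by two parallel strands carrying a $JW_2$ projector, and each trivalent vertex by the standard three-projector smoothing. Under this map the torus curve $(k,l)$ (with $\gcd(k,l)=1$) goes to the Kauffman bracket element of the same slope, twice doubled and projected. This element is $T_2$-like, namely $(k,l)_T^2$ minus a scalar, in Chebyshev notation.

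I would choose the polynomial $Y_g$ so that $(k,l)_Y$ maps exactly to the Frohman--Gelca element $(2k,2l)_T$, or a fixed $q$-normalized version of it. The Frohman--Gelca formula $(a,b)_T(c,d)_T=t^{ad-bc}(a+c,b+d)_T+t^{-(ad-bc)}(a-c,b-d)_T$ then gives the first relation modulo the kernel of the map. Here $d=kr-ls$ up to the normalization in the paper's conventions, and I expect $q=A^2$ or $A^4$, which accounts for the $q^{\pm 2d}$.

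The key second step is to identify the kernel of this comparison map. On the basis of Theorem \ref{Basis torus}, I would show the kernel is spanned by the single web combination $w=[2]K-(0,1)-(1,0)-(1,1)+[3]\varnothing$. I would verify this by computing the image of $K$ in $\Sq(\Sigma_{1,0})$: expand the projectors in the hexagon diagram and check that it equals $([2])^{-1}$ times the image of $(0,1)+(1,0)+(1,1)-[3]$. The images of the $(k,l)_Y$ are linearly independent Chebyshev elements, which shows the kernel is exactly one-dimensional. Hence $(k,l)_Y(r,s)_Y$ equals the lifted sum plus $\alpha w$ for some scalar $\alpha$.

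To determine $\alpha$, I would use a second functional that detects $w$. The natural choice is the map to the $\rm{PGL}_2$ skein algebra with the other spin structure or twisted sign, i.e.\ the twisted map to $\Sq$ sending $(k,l)\mapsto(-1)^{\epsilon(k,l)}$ times the curve, where $\epsilon$ is a $\mathbb{Z}/2$ homology character. The three-dimensional space of such characters together with the untwisted map should separate $w$. Alternatively, I would compute $\alpha$ directly via a $\mathrm{SL}_2(\mathbb{Z})$ reduction. The mapping class group acts on everything, so I can reduce to $(k,l)=(g_1\text{-multiple of }(1,0))$ and compute products with $(1,0)$ inductively. Then $\alpha$ and $g_1$ depend only on parities of $k,l,r,s$, matching the $(-1)^{g_1}$ factor.

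The relations with $w$ follow by a local argument. First, $w$ is central up to the stated scalar because it lies in the kernel, which is an ideal of dimension one, so $(k,l)_Y w=\lambda w=w(k,l)_Y$. I would compute $\lambda$ by evaluating the character: $(k,l)_Y$ acts on the one-dimensional ideal through the map to $\Y(\Sigma_{1,0})/\text{(all but }w)$. Concretely, I would resolve $(1,0)\cdot K$ using the bigon and trigon relations, getting $\lambda=\pm 2$ depending on parity, and then use $Y_g$ to propagate this. The same computation gives $w^2=2[2]^2w$.

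The main obstacle will be the explicit diagrammatic computation of $K\cdot(k,l)$ and of $\alpha$, which requires careful Yamada-type reductions on the torus. I would first try to handle it through the $\mathrm{SL}_2(\mathbb{Z})$-reduction to small slopes.
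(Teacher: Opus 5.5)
There is a genuine gap: your plan for determining $\alpha$ would not work. Lifting the Frohman--Gelca formula through $JW$, so that the product is known modulo $\ker(JW)=\mathrm{span}(w)$, is exactly what the paper does. The trouble is the second functional you propose. Take any ``twisted'' map $\sigma_\epsilon\circ JW$, where $\sigma_\epsilon$ is the sign-change automorphism of $\Sq(\Sigma_{1,0})$ attached to $\epsilon\in H^1(\Sigma_{1,0};\mathbb{Z}/2)$. Its kernel contains $\ker(JW)$, so it kills $w$ and carries no information about $\alpha$. In fact it simply equals $JW$: every edge is doubled, so $JW$ lands in the even subalgebra (e.g.\ $JW((k,l))=(k,l)^2-\varnothing$), and $\sigma_\epsilon$ acts trivially there. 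No map obtained by post-composing $JW$ can detect $w$.

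Your fallback, an $\mathrm{SL}_2(\mathbb{Z})$-reduction plus induction on products with $(1,0)$, only reduces the problem to the $w$-coefficient of $(1,0)\cdot(r,s)_Y$ for arbitrary $(r,s)$. That is the same unknown in a special case. Your expectation that $\alpha$ depends only on parities is also false, since $\alpha$ involves $q^{\pm 2d}$.

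The missing idea is already latent in your last paragraph. Because $\mathrm{span}(w)$ is a two-sided ideal, $a\,w=\chi(a)\,w$ defines an algebra homomorphism $\chi:\Y(\Sigma_{1,0})\to\mathbb{C}$ with $\chi(w)=2[2]^2\neq 0$. This is the functional that detects $w$. So prove the $w$-relations first, then multiply the product relation by $w$ to get $(-1)^{g_1+g_2}4=2(-1)^{g_3}q^{2d}+2(-1)^{g_4}q^{-2d}+2[2]^2\alpha$. That is exactly the paper's argument.

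Those $w$-relations need several pieces you did not supply:
\begin{enumerate}
\item The invariance $h_*(w)=w$ under orientation-preserving diffeomorphisms. Naturality of $JW$ gives $h_*(w)=cw$, and comparing $K$-coefficients gives $c=1$. Without this, the reduction to a single slope is unjustified.
\item A reason the left and right scalars coincide. A one-dimensional two-sided ideal a priori gives two characters; the paper uses the symmetry $q\mapsto q^{-1}$.
\item The $K$-coefficient of $w\cdot(0,1)$ using all five terms of $w$, not just $K\cdot(0,1)$. This gives $-1$ for a simple curve, and $(-1)^{g}2$ only after applying the recursion for $Y_g$.
\item For $w^2$, the value $\chi(K)$, which is not ``the same computation'' as for curves. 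It follows from the crossing expansion of $(1,0)\cdot(0,1)$, giving $[2]\chi(K)=q^2+q^{-2}$ and hence $\chi(w)=2[2]^2$. The paper instead lifts to $\Sigma_{1,1}$, where $JW$ is injective and $JW(w)=[2]x+[2]^2\varnothing$.
\end{enumerate}
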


These theorems provide the following corollaries.

\begin{corollary}(Corollary \ref{positive torus basis})
Using the substitutions $e=\frac{1}{2[2]^2}w$ and $c_{r,s}=(1-e)(r,s)_Y$ we obtain a basis with structure constants in $\mathbb{N}[q,q^{-1}]$ for the algebra $\Y(\Sigma_{1,0})$ given by $P=\{e\}\cup\{c_{r,s} \mid r,s \in \mathbb{Z}\}.$ The product of pairs of elements of this basis is given by:
\[\begin{split}
	c_{k,l} \cdot c_{r,s} & = q^{2(rl-ks)}c_{k+r,s+l} + q^{-2(rl-ks)}c_{k-r,s-l},\\
	c_{k,l} \cdot e & =0= e \cdot c_{k,l},\\
	e^2 & = e.
\end{split}\]    
\end{corollary}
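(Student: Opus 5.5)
This corollary follows from the multiplication table of Theorem \ref{torus presentation} (the second theorem stated above) together with the basis statement of Theorem \ref{intro basis torus}; I expect the computation to be purely algebraic.

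\emph{Idempotence of $e$.} From $w^2 = 2[2]^2 w$ we get
\[e^2 = \frac{1}{4[2]^4}\, w^2 = \frac{2[2]^2}{4[2]^4}\, w = e .\]
This uses that $[2]=q+q^{-1}\neq 0$, which is our standing assumption.

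\emph{Products with $e$.} The relation $(k,l)_Y\cdot w = (-1)^{g_1}2w$ gives $(k,l)_Y e = e(k,l)_Y = \epsilon\, e$, where $\epsilon=\pm 2$. Hence
\[c_{k,l}\, e = (1-e)(k,l)_Y\, e = \epsilon(e - e^2) = 0,\]
and $e$ commutes with everything in sight, so also $e\, c_{k,l}=0$. In particular $1-e$ is a central idempotent orthogonal to $e$.

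\emph{Products $c_{k,l}c_{r,s}$.} Since $(1-e)$ is a central idempotent,
\[c_{k,l}c_{r,s} = (1-e)^2 (k,l)_Y(r,s)_Y = (1-e)\bigl(q^{2d}(k+r,l+s)_Y + q^{-2d}(k-r,l-s)_Y + \alpha w\bigr).\]
The $w$ term is killed because $(1-e)w = w - 2[2]^2 e\cdot w/(2[2]^2)\cdot\ldots$; more simply $(1-e)w = 2[2]^2(1-e)e = 0$. So we obtain $q^{2d}c_{k+r,l+s}+q^{-2d}c_{k-r,l-s}$. I must check against Theorem \ref{torus presentation} that $d = rl-ks$, matching the stated exponent up to the sign convention in that theorem. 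One should also confirm the index conventions: the statement writes $c_{k+r,s+l}$ and $c_{k-r,s-l}$, and I would reconcile $s-l$ with $l-s$ via $c_{-a,-b}=c_{a,b}$. That identity should follow from $(k,l)_Y=(-k,-l)_Y$, since unoriented curves satisfy $(k,l)=(-k,-l)$.

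\emph{$P$ is a basis.} Both $\{(r,s)_Y\}\cup\{w\}$ and the set $P$ need care here, since $(r,s)_Y$ ranges over all of $\mathbb Z^2$ with the identification $(r,s)\sim(-r,-s)$. So I read the basis as indexed modulo $\pm$, and presumably $(0,0)_Y$ is a multiple of $\varnothing$. The transformation from $\{(r,s)_Y\}\cup\{w\}$ to $P$ is
\[c_{r,s} = (r,s)_Y - \epsilon_{r,s}\, e, \qquad e = \tfrac{1}{2[2]^2}\, w,\]
where $\epsilon_{r,s}\in\{\pm 2\}$ is the scalar from $(r,s)_Y e=\epsilon_{r,s}e$. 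This is unitriangular after rescaling $w$, hence invertible, so $P$ is a basis.

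\emph{Positivity.} The structure constants are $q^{\pm 2(rl-ks)}$ and $1$, which lie in $\mathbb N[q,q^{-1}]$.

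\emph{Main obstacle.} The only delicate point is the bookkeeping. One must verify that the scalar $d$ from Theorem \ref{torus presentation} equals $rl-ks$ and not its negative. One must also handle the case $(k,l)=(0,0)$, including how $(0,0)_Y$ is normalized and whether $c_{0,0}$ equals $2(1-e)$ or $(1-e)$. If $(0,0)_Y = 2\varnothing$, then the formula $c_{k,l}c_{0,0}=2c_{k,l}$ is consistent with the product rule, so this must be checked against Equation (\ref{Y}).
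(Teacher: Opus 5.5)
Your argument is the paper's: the change of basis from $\{w\}\cup\{(r,s)_Y\}$ to $P$ is triangular with nonzero diagonal entries, and the products follow from Theorem~\ref{torus presentation} once you use that $e$ is a central idempotent with $(1-e)w=0$ and $(k,l)_Y e=(-1)^{g_1}2e$. Your sign worry resolves at once, since $d=-(ks-lr)=rl-ks$. The case $(0,0)_Y=2\varnothing$ is also consistent: it gives $c_{0,0}=2(1-e)$, which fits the product rule.

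The one step that fails is your plan to reconcile the subscript $c_{k-r,s-l}$ in the statement with $c_{k-r,l-s}$ via $c_{-a,-b}=c_{a,b}$. That symmetry negates \emph{both} coordinates. As unoriented pairs, $(k-r,s-l)$ equals $\pm(k-r,l-s)$ only when $l=s$ or $k=r$.

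Take $(k,l)=(1,0)$ and $(r,s)=(0,1)$. Your computation gives $c_{1,0}c_{0,1}=q^{-2}c_{1,1}+q^{2}c_{1,-1}$. The displayed formula, read literally, would give $(q^{-2}+q^{2})c_{1,1}$. These differ, because $c_{1,1}$ and $c_{1,-1}$ are distinct elements of $P$.

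So the subscript in the statement is a misprint for $c_{k-r,l-s}$ (equivalently $c_{r-k,s-l}$). That corrected formula is what your computation proves, and it is also what the paper's proof proves, since it simply applies Theorem~\ref{torus presentation}. You should state this correction rather than try to force the literal version, which is false.
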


\begin{corollary}\label{intro Corollary zero divisors}(Corollaries \ref{JW not injective} and \ref{zero divisor})
The map $JW:\Y(\Sigma_{1,0}) \rightarrow \Sq(\Sigma_{1,0})$ from the $\rm{PGL}_2$ skein algebra of the torus to the Kauffman bracket skein algebra of the torus is not injective. Moreover, $\Y(\Sigma_{1,0})$ is not a domain and so there does not exist an injective algebra homomorphism $\Y(\Sigma_{1,0}) \rightarrow \Sq(\Sigma_{1,0}).$
\end{corollary}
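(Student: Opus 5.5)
The plan is to deduce both parts directly from the multiplication table of Theorem \ref{torus presentation} and the positive basis of Corollary \ref{positive torus basis}.

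\textbf{Zero divisors.} Set $e=\frac{1}{2[2]^2}w$. By the relations $w\cdot(k,l)_Y=(-1)^{g_1}2w$ and $w^2=2[2]^2w$, $e$ is an idempotent. For any $r,s$ we have $c_{r,s}=(1-e)(r,s)_Y$. Corollary \ref{positive torus basis} gives $c_{r,s}\cdot e=0=e\cdot c_{r,s}$.

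Both factors are nonzero because $P$ is a basis: $e$ and $c_{r,s}$ are distinct basis elements. Hence $e\,c_{0,1}=0$ is a product of two nonzero elements equal to zero, so $\Y(\Sigma_{1,0})$ is not a domain. For this we need $[2]\neq 0$, which is the standing assumption $q+q^{-1}\neq 0$.

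\textbf{No injective homomorphism.} $\Sq(\Sigma_{1,0})$ is a domain. This is standard: by Frohman--Gelca it embeds in the quantum torus via the Weyl ordering, or one can use a leading-term/filtration argument on the multicurve basis. An injective algebra map $\Y(\Sigma_{1,0})\to\Sq(\Sigma_{1,0})$ would make $\Y(\Sigma_{1,0})$ a subring of a domain, contradicting the previous step.

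\textbf{Non-injectivity of $JW$.} This follows once we know $JW$ is an algebra homomorphism. I expect the paper defines $JW$ by cabling edges with the second Jones--Wenzl projector $f_2$ (rescaled so that the $\rm{PGL}_2$ relations hold), and that this is compatible with stacking. Since $\Sq(\Sigma_{1,0})$ is a domain, $JW(e)\,JW(c_{0,1})=0$ forces $JW(e)=0$ or $JW(c_{0,1})=0$. Either way $JW$ has a nonzero kernel element.

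I would also like the more informative statement that $JW(w)=0$. For this I would compute $JW(K)$ directly: expand each of the projectors at the two trivalent vertices of the hexagon, then resolve into multicurves on the torus. This should give a combination of $(0,1)$, $(1,0)$, $(1,1)$ and $\varnothing$, and I would check that it equals $\frac{1}{[2]}\big((0,1)+(1,0)+(1,1)-[3]\big)$ after accounting for the images of the curves under $JW$.

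The main obstacle is that this direct check depends on the normalization conventions: a curve maps to the closure of $f_2$ on a two-cable, which is $T_2(x)=x^2-2$ of the underlying curve. The abstract argument avoids the calculation entirely, so I would give that first and the explicit kernel element as a complement.
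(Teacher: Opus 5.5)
\textbf{Gap: your argument for non-injectivity of $JW$ is circular.} Taking the paper's later results as given, your zero-divisor and no-embedding steps are valid; the no-embedding step is exactly the paper's. But the claim that the abstract argument ``avoids the calculation entirely'' is not true. The relation $c_{r,s}\cdot e=0$ you use comes from $(r,s)_Y\cdot w=(-1)^{\gcd(r,s)}2w$, i.e.\ from Lemma \ref{w*(r,s)}. That lemma's proof starts from ``$\ker(JW)$ is an ideal spanned by $w$''. Likewise, Lemma \ref{product of threaded curves}, which underlies Corollary \ref{positive torus basis}, only determines products modulo $\ker(JW)=\mathrm{span}(w)$. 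Both rest on Corollary \ref{JW not injective}, that is, on $JW(w)=0$, which is exactly what you want to deduce. The paper argues in the opposite direction:
\begin{itemize}
\item $w\neq0$, because its $K$-coefficient is $[2]\neq0$ and $K$ is not in the span of multicurves (Corollary \ref{LI beta}, via $\hat P$);
\item a direct computation gives $JW(w)=0$;
\item the non-domain statement uses only $w^2=2[2]^2w$.
\end{itemize}

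\textbf{A small change makes your idea non-circular.} Use only $e^2=e$. This is Lemma \ref{w^2}, which is proved on $\Sigma_{1,1}$ via Corollary \ref{JW injective on punctured torus} and never uses $\ker(JW)$ on the closed torus. Combine it with $e\neq0$ and $e\neq\varnothing$, both read off from the $K$-coefficient. Then $e(\varnothing-e)=0$ exhibits zero divisors. Since $\Sq(\Sigma_{1,0})$ is a domain, $JW(e)\,JW(\varnothing-e)=0$ forces a nonzero element into $\ker(JW)$ without computing it. This is a legitimate alternative to the paper's route. It is weaker, though: it does not identify $\ker(JW)=\mathrm{span}(w)$, which the paper needs afterwards. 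In fact the proof of Lemma \ref{w^2} already computes $JW(w)=[2]x+[2]^2\varnothing$ on $\Sigma_{1,1}$, which becomes $0$ once the puncture is filled, so the direct check is essentially free.

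\textbf{The normalization you anticipate for the explicit check is wrong.} With the paper's conventions, the Jones--Wenzl box is the identity plus $\frac{1}{[2]}$ times the cup--cap diagram. The closure of the cup--cap diagram around an annulus is a trivial loop, worth $-[2]$. Hence $JW((k,l))=(k,l)^2-\varnothing$, not $T_2((k,l))=(k,l)^2-2\varnothing$; the latter is $JW((k,l)_Y)$. With $x^2-2$ your check would give $JW(w)=3\varnothing$ instead of $0$.
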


We note that the fact from Corollary \ref{intro Corollary zero divisors} that $\Y(\Sigma_{1,0})$ is not a domain is in contrast with the case of the Kauffman bracket skein algebra $\Sq(\Sigma_{1,0})$, which is a domain by \cite{FG00} (and more generally, $\Sq(\Sigma_{g,n})$ is a domain for any oriented surface $\Sigma_{g,n}$ by \cite{BW11,PS19}). Additionally, the non-injectivity of $JW:\Y(\Sigma_{1,0}) \rightarrow \Sq(\Sigma_{1,0})$ from Corollary \ref{intro Corollary zero divisors} is in contrast to the injectivity of the analogous functor from the trivalent analogue of the Temperley-Lieb category to the Temperley-Lieb category, proven in \cite{FK10}.

We then study the center of $\Y(\Sigma_{1,0})$ for both $q$ generic and $q$ specialized to a root of unity.

\begin{theorem}(Theorem \ref{torus center})
\phantom\\
\begin{enumerate}[(a)]
\begin{item}
If $q$ is not a root of unity, then the center of $\Y(\Sigma_{1,0})$ is as follows:
\begin{equation*}
Z(\Y(\Sigma_{1,0})) = \mathrm{span}\{\varnothing,w\}.
\end{equation*}
\end{item}
\begin{item}
If $q=\omega_n$ is a primitive $n^{\text{th}}$ root of unity, then the center of $S_{\omega_{n}}^{Y}(\Sigma_{1,0})$ is
\begin{equation*}
Z(S_{\omega_{n}}^{Y}(\Sigma_{1,0})) = \mathrm{span}(\{(k,l)_{Y} : n \mid 4\mathrm{gcd}(k,l)\} \cup \{w\}).
\end{equation*}
\end{item}
\end{enumerate}
\end{theorem}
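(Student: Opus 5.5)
The natural tool is the positive basis $P=\{e\}\cup\{c_{r,s}\}$ of Corollary \ref{positive torus basis}. Its multiplication table splits $\Y(\Sigma_{1,0})$ as an algebra direct sum $\mathbb{C}e\oplus(1-e)\Y(\Sigma_{1,0})$. Here $e$ is a central idempotent, and $(1-e)\Y(\Sigma_{1,0})$ has basis $\{c_{r,s}\}$.

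On that second summand the products are
\[c_{k,l}c_{r,s}=q^{2(rl-ks)}c_{k+r,l+s}+q^{-2(rl-ks)}c_{k-r,l-s},\]
which is the Frohman--Gelca product-to-sum rule with $q$ replaced by $q^2$. We will also need $c_{-k,-l}=c_{k,l}$, since $(k,l)$ and $(-k,-l)$ are the same unoriented curve.

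The center is then $\mathbb{C}e\oplus Z\big((1-e)\Y(\Sigma_{1,0})\big)$. Because $w$ is a multiple of $e$, and $\varnothing=e+c_{0,0}$ (as $(0,0)_Y$ is the empty diagram), the whole problem reduces to computing the center of the algebra spanned by the $c_{r,s}$.

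For that center we take a general element $z=\sum_{(k,l)}a_{k,l}c_{k,l}$, a finite sum over representatives of $\mathbb{Z}^2/\pm$. We impose $c_{1,0}z=zc_{1,0}$ and $c_{0,1}z=zc_{0,1}$; these suffice because $c_{1,0}$ and $c_{0,1}$ generate, which follows from the product formula by induction, as in the Kauffman bracket case. Expanding gives
\[c_{1,0}c_{k,l}-c_{k,l}c_{1,0}=(q^{2l}-q^{-2l})\big(c_{k+1,l}-c_{k-1,l}\big)\]
up to sign conventions, and similarly for $c_{0,1}$ with the factor $(q^{2k}-q^{-2k})$.

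For $q$ generic, suppose some $a_{k,l}\neq0$ with $(k,l)\neq(0,0)$. Choose such a pair extremal in a lexicographic order. Then the coefficient of $c_{k+1,l}$ (or of $c_{k,l+1}$) in the commutator comes from that single term, so we need $q^{4l}=1$ and $q^{4k}=1$, forcing $k=l=0$, a contradiction. Hence the center of the $c$-part is $\mathbb{C}c_{0,0}$, and the full center is $\mathrm{span}\{e,c_{0,0}\}=\mathrm{span}\{\varnothing,w\}$.

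At $q=\omega_n$ the same computation shows that $c_{k,l}$ commutes with $c_{1,0}$ and $c_{0,1}$ exactly when $q^{4k}=q^{4l}=1$, that is, when $n\mid4k$ and $n\mid 4l$, equivalently $n\mid 4\gcd(k,l)$. For the converse, the leading-term argument shows that any central $z$ has its extremal term satisfying this divisibility. We subtract that term, which is central, and induct on the support. So the center is spanned by $e$ and $\{c_{k,l}: n\mid 4\gcd(k,l)\}$.

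The last step translates back to the original basis via $c_{k,l}=(k,l)_Y-e(k,l)_Y$. By Theorem \ref{torus presentation}, $e(k,l)_Y$ is a scalar multiple of $e$, proportional to $w$. So $\mathrm{span}\{e,c_{k,l}\}=\mathrm{span}\{w,(k,l)_Y\}$, which gives both statements.

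The main obstacle is the leading-term argument. Because of the identification $(k,l)\sim(-k,-l)$ and the two-term product, terms can cancel, so I will first fix a canonical half-lattice, $k>0$ or $k=0,l\ge0$. When the extremal term lies on the boundary of that half-lattice (for instance $k=0$), I will choose which commutator to use, with $c_{1,0}$ or with $c_{0,1}$, so that the top-degree coefficient is not cancelled.
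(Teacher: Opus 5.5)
Your proposal is correct in outline and takes a different route from the paper. The paper stays in the basis $\{w\}\cup\{(k,l)_Y\}$ and reads off the easy inclusion from $[(k,l)_Y,(r,s)_Y]$ via Theorem \ref{torus presentation}; the extra $w$-term is harmless because $w$ is central. For the reverse inclusion it tests the top term $(k_m,l_m)_Y$ of a central element against one B\'ezout partner $(r,s)$ whose determinant with $(k_m,l_m)$ is $g=\gcd(k_m,l_m)$, which forces $q^{4g}=1$ in one step.

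You instead split off the central idempotent $e$. This removes $w$ entirely and leaves the center of the symmetrized quantum torus spanned by the $c_{k,l}$. You test only against $c_{1,0}$ and $c_{0,1}$ and get $q^{4k}=q^{4l}=1$, which is the same condition $n\mid 4\gcd(k,l)$. Your lexicographic choice does handle the cancellation issue. Take $(k,l)$ lexicographically largest, with representatives in your half-lattice. Then the coefficient of $c_{k+1,l}$ in $[c_{1,0},z]$ is exactly $a_{k,l}(q^{-2l}-q^{2l})$, and that of $c_{k,l+1}$ in $[c_{0,1},z]$ is exactly $a_{k,l}(q^{2k}-q^{-2k})$. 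Apart from terms excluded by maximality, the only other candidate contribution (via $c_{-k,-l}=c_{k,l}$) comes from the $(0,0)$ term, whose prefactor vanishes. So no boundary case split is needed, and this spells out a step the paper states only briefly.

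One supporting claim is false, though you do not need it. Getting $c_{1,1}$ from $c_{1,0},c_{0,1}$ means solving $c_{1,0}c_{0,1}=q^{-2}c_{1,1}+q^{2}c_{1,-1}$ and $c_{0,1}c_{1,0}=q^{2}c_{1,1}+q^{-2}c_{1,-1}$, which requires $q^{8}\neq 1$. Take $n=8$, which is allowed since $[2]\neq 0$. Then $c_{1,0}$ and $c_{0,1}$ anticommute and have central squares. So they generate only the $\mathbb{C}[c_{1,0}^2,c_{0,1}^2]$-span of $1,c_{1,0},c_{0,1},c_{1,0}c_{0,1}$, which does not contain $c_{1,1}$. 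You can check this in the realization $c_{k,l}\mapsto e_{(k,l)}+e_{(-k,-l)}$ in the quantum torus, where $X^2,Y^2$ are central. Hence in (b) you cannot conclude that the subtracted extremal term is central just because it commutes with $c_{1,0}$ and $c_{0,1}$. Use the product formula instead:
\[[c_{k,l},c_{r,s}]=\bigl(q^{2(rl-ks)}-q^{-2(rl-ks)}\bigr)\bigl(c_{k+r,l+s}-c_{k-r,l-s}\bigr).\]
This vanishes for every $(r,s)$ once $n\mid 4\gcd(k,l)$, because $\gcd(k,l)\mid rl-ks$. That gives the easy inclusion. For the hard inclusion, run your subtract-and-induct argument inside the centralizer of $\{c_{1,0},c_{0,1}\}$; it only uses that central elements commute with these two.

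Two harmless slips. With the paper's convention $(0,0)_Y=2\varnothing$, so $\varnothing=e+\tfrac12 c_{0,0}$. And the $c$-product is Frohman--Gelca with $q^{1/2}$, not $q$, replaced by $q^{2}$; that is, $c_{k,l}$ multiplies like $(2k,2l)_T$.
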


We note that in contrast to the Kauffman bracket case, the center of $\Y(\Sigma_{1,0})$ is not spanned by the empty diagram when $q$ is generic. Also, the image of the Frobenius homomorphism constructed in \cite{HSSSSW26} is always contained in the center of $\Y(\Sigma_{1,0})$ but the central element $w$ might not be in this image. See Remark \ref{center comparison} for a detailed comparison with the Kauffman bracket case.

In Section \ref{1-punctured torus section} we study the 1-punctured torus $\Sigma_{1,1}$ and use it to further study the closed torus.

\begin{theorem}(Theorem \ref{punctured torus presentation})
If $q^4 \neq 1$, then $\Y(\Sigma_{1,1})$ has a finite algebra presentation given by 4 generators modulo the relations appearing in Theorem \ref{punctured torus presentation}.
\end{theorem}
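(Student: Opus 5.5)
We will prove this in the style of Bullock--Przytycki. First we build a spanning set, then compute relations, and finally show that the relations suffice by comparing with a basis. The natural generators are the three simple closed curves $x=(1,0)$, $y=(0,1)$, $z=(1,1)$ on $\Sigma_{1,1}$, as in the Kauffman bracket case, together with one trivalent web. For the web we take the analogue of $K$ on the punctured torus: a theta-type graph whose three edges run along the slopes $(1,0)$, $(0,1)$ and $(1,1)$, with the puncture placed in the complementary hexagon. The boundary-parallel curve $\partial$ around the puncture is not taken as a generator; it should be expressible in terms of these four, as in Bullock--Przytycki, where $\partial$ is a polynomial in $x,y,z$.

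\textbf{Step 1 (generation).} We reduce an arbitrary trivalent web on $\Sigma_{1,1}$ using the $\rm{PGL}_2$ skein relations: the bigon, trigon and square (I=H-type) reductions, and crossing resolution into the two smoothings plus the vertex-merge term. Arguing by induction on the number of vertices and crossings, every element becomes a combination of products of multicurves and webs with no faces of small size. Euler characteristic then bounds the reduced webs: on $\Sigma_{1,1}$ a reduced trivalent web has all disk faces of size at least $6$, so there are only finitely many combinatorial types up to the mapping class group. These are essentially the hexagon web and its variants around the puncture, the latter being the pictures labelled \texttt{ranktwomonogonpunctured} and \texttt{bigonhexpunctured}. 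Using the $SL_2(\mathbb Z)$ action, generated by Dehn twists, we express twisted copies via products of $x,y,z$ and the web. We will also use the Kauffman-bracket-style product $x\cdot y = q^{a}z + q^{-a}(\text{other diagonal}) + (\text{web terms})$ to show that all $(k,l)$ curves, and hence all basis elements, lie in the generated subalgebra.

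\textbf{Step 2 (relations).} We compute the commutation relations $q^{2}xy-q^{-2}yx = (\cdots)z + (\cdots)$ and their cyclic analogues. The coefficients will be $q^{\pm 2}$ rather than $q^{\pm1}$, because the $\rm{PGL}_2$ crossing acts with squared twist factors. We add the relations expressing products of each curve with the web, $x\cdot W$, $W\cdot x$ and so on, together with a cubic "Markov-type" relation. This cubic relation is obtained by computing $\partial$ in two ways, or by squaring the web $W^2$. The hypothesis $q^4\neq 1$ enters here: in these relations we will need to divide by $q^2-q^{-2}$ in order to solve for $z$ and for the web in terms of commutators. The requirement $q+q^{-1}\ne0$ is already in force globally.

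\textbf{Step 3 (completeness).} Let $A$ be the abstract algebra with these 4 generators and relations, so that $A\to\Y(\Sigma_{1,1})$ is surjective by Step 1. We exhibit a spanning set of $A$ consisting of ordered monomials, with PBW-type reduction coming from the $q$-commutation relations and the web-product relations. We then show that it maps bijectively onto a basis of $\Y(\Sigma_{1,1})$. That basis would be established by a diamond-lemma or confluence argument on reduced webs, exactly as for Theorem \ref{intro basis torus}: multicurves of the form $\partial^m$ times coprime-slope powers, plus the finitely many reduced web families. \textbf{Main obstacle.} The hardest step is this basis and independence comparison. Both the reduction argument showing that the monomials span $A$ and the linear-independence argument require careful control of the web terms, which have no Kauffman bracket analogue. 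If the independence argument stalls, the fallback is to check independence through the map $JW$ to $\Sq(\Sigma_{1,1})$ on the curve part, and to separate the web part by a filtration by the number of vertices.
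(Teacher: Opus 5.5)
Your Step 3 has a genuine gap, and it is the crux of the theorem: you have no working way to show that your monomials are linearly independent in $\Y(\Sigma_{1,1})$. A diamond-lemma or confluence argument on ``reduced webs'' is exactly what is not available for $\mathrm{PGL}_2$ webs. The I-H relation is not a reduction; it trades face sizes against each other, so there is no canonical reduced form. Theorem \ref{Basis torus} was \emph{not} proved that way: its independence used $JW$ together with a specialization of the Fendley--Krushkal flow polynomial to separate $K$ from the multicurves.

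The face-size count in your Step 1 is also false. For every $n$, honeycomb tilings of the torus with $2n$ vertices, with the puncture placed in one hexagon, have all disk faces of size $\ge 6$. So there are infinitely many ``reduced'' webs up to the mapping class group. These webs do reduce, but only by a global argument: two rank-two graphs with the same number of vertices are I-H equivalent, so you can move to a graph with a monogon, or, in the four-vertex punctured case, to a bubbled configuration evaluated by the trigon relation. Rank-one webs around the puncture additionally need Chbili's result on the pair of pants.

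Your fallback of a vertex-count filtration only moves the problem, since you would still need an invariant that detects the top-degree web part. Similarly, ``express twisted copies via Dehn twists'' does not prove generation. A mapping class acts by an automorphism, which does not place a twisted generator inside the subalgebra generated by the untwisted ones.

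The missing idea is that on $\Sigma_{1,1}$ none of this is needed, because $JW:\Y(\Sigma_{1,1})\to\Sq(\Sigma_{1,1})$ is injective on everything, webs included. Indeed $\Sigma_{1,1}\times I\cong\Sigma_{0,3}\times I$, and Chbili's injectivity on the pair of pants transfers along this diffeomorphism (Corollary \ref{JW injective on punctured torus}). On the closed torus injectivity fails precisely because the boundary-parallel curve, which carries the $K$-part of $JW(K)$, becomes a scalar.

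Take generators $a=(0,1)+\varnothing$, $b=(1,0)+\varnothing$, $c=(1,1)+\varnothing$ and $h=\frac{1}{[2]}w+q(0,1)+q^{-1}(1,0)+q(1,1)-q^{-1}\varnothing$. Then $JW(a),JW(b),JW(c)$ are the squares of the Kauffman bracket curves $(0,1),(1,0),(1,1)$, and $JW(h)$ is $q^{1/2}$ times their product. This gives the proof in three pieces:
\begin{enumerate}
\item Every relation in $R$ is checked inside the Bullock--Przytycki presentation, not by web calculus.
\item The relations show that $\{a^ib^jc^kh^l : ijk=0\}$ spans the abstract algebra. Its image under $JW$ is linearly independent by the Bullock--Przytycki monomial basis, which gives injectivity at once.
\item For surjectivity, use the identity $\frac{1}{q^2-q^{-2}}[(k,l),(r,s)]+(k-r,l-s)=(k+r,l+s)$ for curves meeting once, together with an $\mathrm{SL}_2(\mathbb{Z})$ word-length induction; this is where $q^4\neq 1$ is needed. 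It produces all torus knots from $(1,0),(0,1),(1,1)$. Then $K$, the boundary curve and the bigon webs $b_{k,l}$ are written as explicit polynomials in curves, each formula verified by applying the injective map $JW$.
\end{enumerate}
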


In Theorem \ref{puncture filling} we compute the kernel of the map $i_*: \Y(\Sigma_{1,1}) \rightarrow \Y(\Sigma_{1,0})$ induced by filling in the puncture of the torus. Our computation provides as a corollary a finite presentation of the $\rm{PGL}_2$ skein algebra of the closed torus.

\begin{corollary}(Corollary \ref{finite presentation torus})
If $q^4 \neq 1,$ $\Y(\Sigma_{1,0})$ has a finite presentation given by the 4 generators of $\Y(\Sigma_{1,1})$ modulo the relations from Theorem \ref{punctured torus presentation} along with the generators of $\ker(i_*)$ from Theorem \ref{puncture filling}.
\end{corollary}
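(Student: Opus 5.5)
The corollary should follow formally from two ingredients: the finite presentation of $\Y(\Sigma_{1,1})$ in Theorem \ref{punctured torus presentation}, and the description of $\ker(i_*)$ in Theorem \ref{puncture filling}. The plan is to show that the map $i_*:\Y(\Sigma_{1,1})\to\Y(\Sigma_{1,0})$ induced by filling in the puncture is a surjective algebra homomorphism. Then $\Y(\Sigma_{1,0})\cong \Y(\Sigma_{1,1})/\ker(i_*)$, and a presentation of the quotient is obtained by appending generators of the two-sided ideal $\ker(i_*)$ to the relations of $\Y(\Sigma_{1,1})$.

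First I would verify that $i_*$ is an algebra map. The inclusion $\Sigma_{1,1}\hookrightarrow\Sigma_{1,0}$ sends webs to webs. It respects the local $\rm{PGL}_2$ skein relations, since these are supported in disks. It respects stacking in the thickened surface. Hence $i_*$ is well defined and multiplicative.

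Next, surjectivity. By Theorem \ref{intro basis torus}, $\Y(\Sigma_{1,0})$ is spanned by $\varnothing$, $K$, and the powers $(k,l)^n$. Each of these is represented by a diagram that can be isotoped off a chosen point of the torus, since it is a finite graph. Each therefore lifts to $\Sigma_{1,1}$, and $i_*$ is onto.

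Then I would combine the pieces. With $q^4\neq 1$, Theorem \ref{punctured torus presentation} writes $\Y(\Sigma_{1,1})=F/R$, where $F$ is the free algebra on the 4 generators and $R$ is the ideal generated by the listed relations. Lift the generators of $\ker(i_*)$ from Theorem \ref{puncture filling} to elements of $F$ expressed as words in these 4 generators. Call the ideal they generate $R'$. By the third isomorphism theorem, $F/(R+R')\cong \Y(\Sigma_{1,1})/\ker(i_*)\cong\Y(\Sigma_{1,0})$.

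The only point needing care, and the main potential obstacle, is what exactly Theorem \ref{puncture filling} provides. We need $\ker(i_*)$ given as a two-sided ideal by finitely many explicit generators, written in terms of the 4 algebra generators. If that theorem instead lists kernel elements as diagrams, for instance differences of webs that become isotopic after filling, or describes the kernel as a span, then I would first express those elements as polynomials in the generators. This should be possible because the four generators generate $\Y(\Sigma_{1,1})$. I would then check that finitely many of them generate the whole kernel as an ideal. Granting the finite ideal generation stated there, the presentation is finite.
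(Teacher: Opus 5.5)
Your proposal is correct and is essentially the paper's argument. The corollary is read off directly from three ingredients: the surjectivity of $i_*$ (noted in the proof of Theorem \ref{puncture filling}), the finite presentation of Theorem \ref{punctured torus presentation}, and the description $\ker(i_*)=\langle aw,bw,cw\rangle$. The obstacle you anticipate does not arise, because that theorem already gives three explicit ideal generators, and $w$ is itself a polynomial in the generators, namely the image under $f$ of $[2](h-qa-q^{-1}b-qc)+2[2]^2$.
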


\subsection{Acknowledgments}

This work was supported by National Science Foundation Grant No. DMS-2136090 and was completed during the Geometry \& Topology REU at UCLA during Summer 2025. We thank Ko Honda for overseeing the program, and we thank Roman Krutowski, Jung Joo Suh, and Yan Tao for their mentorship and support.  We thank David Jordan, Sam Gunningham, and Slava Krushkal for helpful discussions.

\section{Preliminaries}\label{preliminaries}

Let $\Sigma$ be an oriented surface. Let $q \in \mathbb{C}\setminus \{0\}$ denote a nonzero complex number such that $q+q^{-1} \neq 0$. The $n^{th}$ \textit{quantum integer} $[n]$ is defined by the formula
\[[n] = \sum_{i=1}^{n} q^{n
+1-2i}, n \in \mathbb{Z}_{> 0}.\]

\subsection{Kauffman bracket skein algebra}
We first recall the Kauffman bracket skein algebra of an oriented surface $\Sigma$.
The \textit{Kauffman bracket skein algebra}  $\Sq(\Sigma)$ of $\Sigma$ over $\mathbb{C}$ is the $\mathbb{C}$-vector space spanned by unoriented framed link diagrams on $\Sigma$ up to surface isotopy, modulo the Kauffman bracket skein relations:
\[\underset{\text{Kauffman bracket relation}}{\positivecrossing = q^{\frac{1}{2}} \verticalresolution + q^{-\frac{1}{2}} \horizontalresolution}, \quad \underset{\text{loop relation}}{\trivialcomponent = -[2]\varnothing}.\]
Multiplication of two skeins is defined on link diagrams $L_{1},L_{2} \in \Sq(\Sigma)$ by taking $L_{1} \cdot L_{2}$ to be the diagram given by placing $L_{1}$ over $L_{2}$, and multiplication on $\Sq(\Sigma)$ is given by extending this definition linearly.\\

\subsection{$\rm{PGL}_2$ skein algebra}\label{section PGL2 skein algebra} Next, we define the $\rm{PGL}_2$ skein algebra, which is the subject of this paper.
The \textit{$\rm{PGL}_2$ skein algebra} $\Y(\Sigma)$ of $\Sigma$ over $\mathbb{C}$ is the $\mathbb{C}$-vector space spanned by diagrams of knotted trivalent ribbon graphs (called \textit{webs}) on $\Sigma$ up to surface isotopy, modulo the following relations. We also allow vertexless components.
\begin{equation*}
\begin{aligned}
	& \underset{\text{crossing relation}}{\positivecrossing = q^{2} \verticalresolution + (q^{-2} - 1) \horizontalresolution + [2]\itangle},\\
	& \underset{\text{I-H relation}}{\itangle - \frac{1}{[2]} \horizontalresolution = \htangle - \frac{1}{[2]} \verticalresolution},\\
	& \underset{\text{monogon relation}}{\monogon = 0}, \quad \underset{\text{loop relation}}{\trivialcomponent = [3]\varnothing}.\\
\end{aligned}
\end{equation*}
Multiplication of two webs is defined on diagrams $w_{1},w_{2} \in \Y(\Sigma)$ by taking $w_{1} \cdot w_{2}$ to be the diagram given by placing $w_{1}$ over $w_{2}$, and extending this definition linearly. The defining skein relations imply the following useful relations, which will be used frequently to shorten computation:

\begin{equation}\label{ngon formulae}
\begin{aligned}
	\bigon & = \left( \frac{1 - [3]}{[2]} \right) \strand,\\
	\trigon & = \left( \frac{2 - [3]}{[2]} \right) \vertex.\\
\end{aligned}
\end{equation}

\subsection{Relationship between $\rm{PGL}_2$ and Kauffman bracket skein algebras}

We record the following properties for later use.

\begin{proposition}\label{properties}
	\phantom\\
	\begin{enumerate}[(i)]
		\begin{item}
			There is a well-defined skein algebra homomorphism $JW : \Y(\Sigma) \to \Sq(\Sigma),$ given by performing the following operations on edges and at vertices:
			\[ \quad \strand \mapsto \jwstrand, \quad \vertex \mapsto \jwvertex,\]
			where the box denotes the element
			\[\jwstrand = \jwidentity + \frac{1}{[2]} \jwreturn,\]
			which is the Jones--Wenzl idempotent of the Temperley-Lieb algebra on $2$ strands.
		\end{item}
		\begin{item} \cite{Prz91}
			Let $\Sigma_{1}$ and $\Sigma_{2}$ be oriented surfaces. Then an embedding $f : \Sigma_{1} \times I \to \Sigma_{2} \times I$ induces homomorphisms of the underlying module structure of the skein algebras
			\[f_{\ast} : \Sq(\Sigma_{1}) \to \Sq(\Sigma_{2}), f_{\ast}^{Y} : \Y(\Sigma_{1}) \to \Y(\Sigma_{2})\]
			given by applying the embedding to the associated link or web and extending linearly. An embedding $g : \Sigma_{1} \to \Sigma_{2}$ induces an algebra homomorphism by applying the embedding to the link or web diagram and extending linearly.
		\end{item}
		\begin{item}\cite{BW11,PS19}
			For any oriented surface $\Sigma$, $\Sq(\Sigma)$ has no zero divisors.
		\end{item}
		\begin{item}\cite{Prz91}
			A basis of $\Sq(\Sigma)$ is given by isotopy classes of crossingless diagrams on $\Sigma$ without loops which bound a disk.\\
		\end{item}
	\end{enumerate}
\end{proposition}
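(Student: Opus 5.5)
Items (ii), (iii) and (iv) are quoted from \cite{Prz91} and \cite{BW11,PS19}, so I only plan to prove (i). I would first define $JW$ on a single web diagram $D$ on $\Sigma$ as follows. Replace every edge by two parallel strands carrying a Jones--Wenzl box, and replace every trivalent vertex by the three arcs shown, which join the doubled strands pairwise. A crossing of two edges becomes four Kauffman crossings of the doubled strands. This produces a linear combination of Kauffman bracket link diagrams, and I define $JW(D)$ to be that element of $\Sq(\Sigma)$.

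\emph{Isotopy invariance.} I need $JW(D)$ to be unchanged under isotopy of $D$ on $\Sigma$.
\begin{enumerate}[(a)]
\item Planar isotopy away from crossings and vertices changes nothing.
\item A box can slide along an edge through a crossing, because the Kauffman bracket relation gives Reidemeister II and III invariance for the doubled strands.
\item Two adjacent boxes merge into one, since the Jones--Wenzl element is idempotent.
\item The vertex image is rotationally symmetric, so it does not depend on a choice of cyclic order.
\end{enumerate}
Multiplicativity is then immediate: stacking $D_1$ over $D_2$ and doubling commute, so $JW(D_1\cdot D_2)=JW(D_1)\cdot JW(D_2)$. Linearity is by construction.

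\emph{The relations.} It remains to check that each defining relation of $\Y$ maps to $0$. Every relation is local, so each check happens in the Kauffman bracket skein of a disk with boundary Jones--Wenzl boxes.
\begin{enumerate}[(a)]
\item \emph{Loop relation.} The image of a loop is the closure of the projector. Using the loop value $-[2]$, this equals $[2]^2-\tfrac{[2]}{[2]}=[2]^2-1=[3]$.
\item \emph{Monogon relation.} In the image, two strands of one box are joined by a cap. A Jones--Wenzl projector is annihilated by a cap, so the image is $0$.
\item \emph{I--H relation and crossing relation.} These live in the space $\mathrm{Hom}(P_2\otimes P_2,P_2\otimes P_2)$ of the Temperley--Lieb category. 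This space is $3$-dimensional, corresponding to the channels $P_0,P_2,P_4$. The images of $\verticalresolution$, $\horizontalresolution$ and $\itangle$ are linearly independent in it, using $q+q^{-1}\neq 0$.
\begin{enumerate}[(1)]
\item For I--H, I would expand both sides of the image in a Temperley--Lieb basis of four-point diagrams, namely the diagrams with boxes. Comparing the images of $\itangle,\htangle$ with the two resolutions should show that both sides equal the same combination.
\item For the crossing, I would expand the four Kauffman crossings: $JW(\positivecrossing)$ is the doubled crossing. Projecting onto the basis above then yields the coefficients $q^2$, $q^{-2}-1$ and $[2]$.
\end{enumerate}
\end{enumerate}

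\emph{Main obstacle.} I expect the crossing relation to be the hardest step, because it requires bookkeeping with $q^{\pm1/2}$ over $16$ states. To avoid this, I would use the half-twist eigenvalues on the channels $P_0,P_2,P_4$ instead of expanding all sixteen states. The channel decomposition determines the three coefficients by solving a $3\times 3$ linear system. I would then confirm the system independently by closing up strands and comparing traces.
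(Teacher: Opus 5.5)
Your plan for (i) is the ``standard computation'' the paper has in mind; the paper gives no details there and simply cites (ii)--(iv). Your checks of the loop, monogon and I--H relations are correct.

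For I--H, write $D_{\mathrm v},D_{\mathrm h}$ for the images of the vertical and horizontal resolutions, and $D$ for the boxed diagram with one strand between each pair of adjacent boundary edges. Expanding the box on the internal edge gives $JW(\text{I-web})=D+\frac{1}{[2]}D_{\mathrm h}$ and $JW(\text{H-web})=D+\frac{1}{[2]}D_{\mathrm v}$, which is the relation on the nose. For the monogon, the box that gets capped is the one on the stem. The loop edge contributes the partial trace of $P_2$, which is $-\frac{[3]}{[2]}$ times an arc joining the stem's two strands.

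\textbf{The step that needs repair is your shortcut for the crossing relation.} A channel decomposition $P_2\otimes P_2\cong P_0\oplus P_2\oplus P_4$ with half-twist eigenvalues needs the projectors $\pi_0=\frac{1}{[3]}D_{\mathrm h}$ and $\pi_2=\frac{[2]}{1-[3]}JW(\text{I-web})$. These do not exist at some values of $q$ the paper allows:
\begin{itemize}
\item if $q^2$ is a primitive cube root of unity, then $[3]=0$;
\item if $q^4=-1$, then $[3]=1$, so the bigon constant $(1-[3])/[2]$ vanishes and $JW(\text{I-web})$ squares to zero.
\end{itemize}
The paper only assumes $q+q^{-1}\neq 0$, and it later works at roots of unity (e.g.\ the center theorem). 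So you must add one of two things.

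The first option is a genericity argument. In the crossingless basis of the disk with $8$ marked points, both sides have coefficients that are Laurent polynomials in $q^{1/2}$ divided by powers of $[2]$. Equality for all but finitely many $q$ therefore forces equality whenever $[2]\neq 0$.

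The second option is the direct expansion, which is shorter than you fear. Your independence claim holds for every $q$ with $[2]\neq 0$: each boxed diagram is its turnback-free matching plus matchings containing a turnback inside some boundary pair. Every such turnback matching is killed by the boxes, so among the $16$ states you only need the coefficients of the three turnback-free matchings. Only $7$ states survive:
\begin{itemize}
\item the all-$q^{1/2}$ state gives $q^2D_{\mathrm v}$;
\item the all-$q^{-1/2}$ state gives $q^{-2}D_{\mathrm h}$;
\item five states give $D$, with total coefficient $q+q+q^{-1}+q^{-1}-[2]=[2]$ (the state with coefficient $1$ contains a trivial loop).
\end{itemize}
Hence $JW$ of the crossing is $q^2D_{\mathrm v}+q^{-2}D_{\mathrm h}+[2]D=q^2D_{\mathrm v}+(q^{-2}-1)D_{\mathrm h}+[2]\,JW(\text{I-web})$, exactly as required.
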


Part (i) is checked by a standard computation.

\subsection{The annulus, three-punctured sphere and once-punctured torus}

\begin{proposition}\label{JW injective on pants} \cite{Chb12,Chb06}
$JW: \mathcal{S}_q^Y(\Sigma) \rightarrow \mathcal{S}_q(\Sigma)$ is injective for both $\Sigma=\Sigma_{0,2}$ and $\Sigma=\Sigma_{0,3}$: the annulus and three-punctured sphere.  Additionally, $\mathcal{S}_q^Y(\Sigma_{0,2})\cong \mathbb{C}[a],$ where $a$ is the diagram represented by the core loop of the annulus. 
\end{proposition}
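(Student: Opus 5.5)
The statement has two parts: injectivity of $JW$ on the annulus and on the three-punctured sphere, and the identification $\mathcal{S}_q^Y(\Sigma_{0,2})\cong\mathbb{C}[a]$. The plan for each surface is the same. First find a spanning set of $\mathcal{S}_q^Y(\Sigma)$ by reducing webs with the relations. Then show that $JW$ sends this spanning set to a linearly independent subset of $\Sq(\Sigma)$, using the basis of Proposition \ref{properties}(iv). Linear independence of the images forces the spanning set to be a basis and $JW$ to be injective at the same time.

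\textbf{Reduction to a spanning set.} Let $D$ be a web diagram on $\Sigma$.
\begin{enumerate}[(1)]
\item Use the crossing relation to remove all crossings.
\item Each crossingless web is a planar trivalent graph in $\Sigma\subset S^2$. By Euler characteristic, a component with vertices that meets the punctures in a limited way has a face with at most $5$ sides not containing a puncture. For the annulus and the pair of pants, a counting argument shows that either some face of degree $\le 3$ is a disk, or a disk face of degree $4$ exists.
\item Remove such faces. Monogons, bigons and trigons are removed by the monogon relation and \eqref{ngon formulae}. Squares are removed using the I-H relation, which trades them for diagrams with fewer vertices plus lower terms. This gives an induction on the number of vertices.
\end{enumerate}
The output is a spanning set of small normal forms.
\begin{itemize}
\item Annulus: only the powers $a^n$ of the core curve, after removing trivial loops with the loop relation. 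So $\mathbb{C}[a]\to\mathcal{S}_q^Y(\Sigma_{0,2})$ is surjective.
\item Pair of pants: multicurves in the three boundary-parallel curves $x,y,z$, together with a finite family of theta- and handcuff-type webs separating punctures. Each such web can be multiplied by powers of $x,y,z$.
\end{itemize}

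\textbf{Independence of the image.} Next compute $JW$ on the spanning set. On the annulus, $JW(a)=z^2-1$, where $z$ is the core curve in $\Sq(\Sigma_{0,2})\cong\mathbb{C}[z]$. The powers $(z^2-1)^n$ have distinct degrees, so they are independent. This gives injectivity and the isomorphism $\mathcal{S}_q^Y(\Sigma_{0,2})\cong\mathbb{C}[a]$ simultaneously. On the pants, $\Sq(\Sigma_{0,3})=\mathbb{C}[x,y,z]$ is commutative, and each spanning element maps to a polynomial. Filter by total degree and compute leading terms:
\begin{itemize}
\item a multicurve $x^iy^jz^k$ maps to leading term $x^{2i}y^{2j}z^{2k}$;
\item a web component contributes a monomial in which some variables have odd exponent, because of the parity of strands crossing each arc.
\end{itemize}
Suitably products of webs with curves should then have pairwise distinct leading monomials, which gives independence. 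A concrete check is the count of strands of $JW(D)$ around each puncture modulo $2$, which distinguishes the web types.

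\textbf{Expected main obstacle.} The step I expect to be hardest is the pants reduction. One must show that every crossingless web on $\Sigma_{0,3}$ reduces to the finite list of normal forms. One must also show that products such as theta-web times theta-web collapse back to that list, so that the leading-monomial bookkeeping is not defeated by cancellations. I would attack it with the Euler-characteristic face count, using $\chi(S^2)=2$ and three puncture faces, to guarantee a disk face of degree $\le 5$. I would first try to show that pentagon faces can always be avoided or reduced via I-H moves. If that fails, I would fall back on Chbili's explicit description of these modules in \cite{Chb12,Chb06}.
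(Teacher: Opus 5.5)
Your overall strategy is sound: reduce to a spanning set, then show $JW$ sends it to an independent set. The annulus case goes through once the reduction is repaired. The paper gives no argument of its own here and defers to Chbili. But two steps fail as written.

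\textbf{The face count in step (2) is false, on both surfaces.} The dodecahedron graph with two or three punctures placed in some of its faces is a crossingless web whose puncture-free faces are all pentagons. On $\Sigma_{0,3}$, three loops around the punctures, joined by sticks to a central vertex, leave a single puncture-free face, of degree $9$. So removing squares is not enough.

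The missing observation is that no degree bound is needed. An I-H flip on an edge of a face lowers that face's degree by one, while the faces at the two ends of the edge gain one. The flip costs only terms with two fewer vertices. Hence any face that is an empty disk can be shrunk to a trigon, bigon or monogon and removed.

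A connected component with $V$ vertices has $2+V/2$ faces. So every component with vertices on the annulus, and every one with $V\ge 4$ on the pants, has a puncture-free face. Any components nested inside that face lie in a disk and are evaluated first. This gives surjectivity of $\mathbb{C}[a]\to\Y(\Sigma_{0,2})$. On $\Sigma_{0,3}$ it leaves at most one component with vertices, namely $\theta$ or one of the three handcuff graphs, times boundary curves.

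\textbf{Your independence step on $\Sigma_{0,3}$ aims at something false.} One I-H move on the bridge gives $h=\theta+[2]^{-1}(xy-z)$, for the handcuff whose loops encircle the punctures of $x$ and $y$. So a spanning set containing both $\theta$ and the handcuffs is linearly dependent. Accordingly, your parity test cannot separate the web types.

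Write $x,y,z$ for the boundary curves in both algebras, so that $JW(x)=x^2-1$. One finds $JW(\theta)=xyz+[2]^{-1}(x^2+y^2+z^2-2)$ and $JW(h)=xyz+[2]^{-1}x^2y^2$. Their all-odd parts coincide. Moreover, the total-degree leading term of $JW(h)$ is $x^2y^2$, the same as for $JW(xy)=(x^2-1)(y^2-1)$.

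After eliminating the handcuffs, the spanning set is $\{x^ay^bz^c,\ \theta x^ay^bz^c\}$. Its images have leading monomials $x^{2a}y^{2b}z^{2c}$ and $x^{2a+1}y^{2b+1}z^{2c+1}$, all distinct, which proves injectivity.

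Your worry about $\theta\cdot\theta$ is unnecessary. A product is just another diagram, already covered by the spanning argument, and independence concerns only the images of the spanning set.
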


Although the skein relations Chbili uses involve graphs with no restrictions on the degrees of vertices, instead of only trivalent graphs, it is a routine computation to apply Chbili's methods to obtain the same results for our skein relations.

We next observe a useful corollary. Although we do not compute the $\rm{PGL}_2$ skein algebra of the once-punctured torus until Theorem \ref{punctured torus presentation}, we can use Proposition \ref{JW injective on pants} to observe that $JW$ is also injective on the once-punctured torus.

\begin{corollary}\label{JW injective on punctured torus}
The map $JW: \mathcal{S}_q^Y(\Sigma_{1,1}) \rightarrow \mathcal{S}_q(\Sigma_{1,1})$ is injective.    
\end{corollary}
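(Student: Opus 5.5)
The key observation is that the once-punctured torus $\Sigma_{1,1}$ and the three-punctured sphere $\Sigma_{0,3}$ have homeomorphic thickenings. Both are compact orientable $3$-manifolds with boundary, namely genus-two handlebodies, since each surface deformation retracts onto a wedge of two circles. So I would realize $\Sigma_{1,1}$ and $\Sigma_{0,3}$ as spines of the same handlebody $H$, and fix a homeomorphism $\phi : \Sigma_{1,1}\times I \to \Sigma_{0,3}\times I$. This is an embedding (indeed a homeomorphism) of thickened surfaces. By Proposition \ref{properties}(ii) it induces linear maps
\[\phi_*^Y : \Y(\Sigma_{1,1}) \to \Y(\Sigma_{0,3}), \qquad \phi_* : \Sq(\Sigma_{1,1}) \to \Sq(\Sigma_{0,3}).\]
These are not algebra maps in general, but they are vector space isomorphisms, with inverses induced by $\phi^{-1}$. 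This holds because the skein modules are really invariants of the $3$-manifold with framed webs (respectively framed links) in it; the surface picture is just a presentation via diagrams.

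Next I would check that $JW$ commutes with these maps: $\phi_*\circ JW = JW\circ \phi_*^Y$. The map $JW$ is defined locally, by replacing each ribbon edge with two parallel strands carrying the Jones--Wenzl idempotent and each vertex with the standard three-arc pattern. This local operation makes sense for framed webs in any oriented $3$-manifold. It is compatible with isotopy there because $JW$ is well defined, which is the standard computation behind Proposition \ref{properties}(i): one checks that the $\rm{PGL}_2$ relations map to consequences of the Kauffman bracket relations. Since $\phi$ is a homeomorphism of thickenings, it carries the doubled, projector-decorated framed link associated to a web $\Gamma$ to the one associated to $\phi(\Gamma)$. Hence the square commutes.

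Given commutativity, suppose $x\in\Y(\Sigma_{1,1})$ satisfies $JW(x)=0$. Then $JW(\phi_*^Y x)=\phi_*(JW(x))=0$. Proposition \ref{JW injective on pants} says $JW$ is injective on $\Sigma_{0,3}$, so $\phi_*^Y x = 0$, and since $\phi_*^Y$ is injective, $x=0$.

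The step needing the most care is the claim that the skein spaces, and the $JW$ map, depend only on the thickened $3$-manifold and not on the chosen surface. In particular, Proposition \ref{properties}(ii) as stated only gives homomorphisms $f_*$, not isomorphisms. I would handle this by defining both skein modules intrinsically for framed webs and links in $\Sigma\times I$ modulo local relations. Crossings in diagrams exactly encode framed isotopy in the thickening, via Reidemeister-type moves for ribbon graphs. Applying $\phi$ and $\phi^{-1}$ then gives mutually inverse maps $\phi_*^Y$ and $(\phi^{-1})_*^Y$, and the local relations in balls are preserved by any homeomorphism.
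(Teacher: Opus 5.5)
Your proposal is correct and follows essentially the same route as the paper. Both arguments use a diffeomorphism $\Sigma_{1,1}\times I\cong\Sigma_{0,3}\times I$, the induced isomorphisms of skein modules, the commutative square with $JW$, and the injectivity of $JW$ on the three-punctured sphere. Your extra justification that the induced maps are isomorphisms and that $JW$ is natural is welcome. One small correction: take $\phi$ orientation-preserving, which is always possible, since the chiral crossing relations are preserved only by orientation-preserving homeomorphisms and not by arbitrary ones.
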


\begin{proof}
The thickened surfaces $\Sigma_{1,1} \times I$ and $\Sigma_{0,3} \times I$ are diffeomorphic as 3-manifolds. Let $f : (\Sigma_{1,1}) \times I \to (\Sigma_{0,3}) \times I$ be such a diffeomorphism.

By Proposition \ref{JW injective on pants}, the map $JW: \Y(\Sigma_{0,3}) \rightarrow \Sq(\Sigma_{0,3})$ is injective. Recall the functoriality of $\Sq$ and $\Y$ described in part (ii) of Proposition \ref{properties}. This functoriality, along with the injectivity of $JW$ on $\Y(\Sigma_{0,3})$, implies that $JW$ is injective on $\Y(\Sigma_{1,1})$ by examining the following commutative diagram of skein modules:
	\[\begin{tikzcd}
		\Y(\Sigma_{1,1}) \arrow[d, "f_{\ast}^{Y}", "\sim"'] \arrow[r, hook, "JW"] & \Sq(\Sigma_{1,1})\\
		\Y(\Sigma_{0,3}) \arrow[r, hook, "JW"] & \Sq(\Sigma_{0,3}) \arrow[u, "f_{\ast}^{-1}", "\sim"'].\\
	\end{tikzcd}\]
\end{proof}

\section{The torus}\label{torus section}

Let $(k,l)$ denote the $(k,l)$ torus knot, which traverses the meridian $k$ times and the longitude $l$ times at a slope of $k/l$, as follows:
\[\pqknot.\]
Recall that we consider elements of the skein algebra as unoriented, so $(k,l) = (-k,-l)$. For the sake of computation, however, take the convention that $l \ge 0$ and $l = 0 \implies k > 0$. We will omit an indication of orientation of the knots except where relevant.

By part (iv) of Proposition \ref{properties}, a basis for the Kauffman bracket skein algebra $\mathcal{S}_q(\Sigma_{1,0})$ is given by the set of \textit{multicurves} on the surface: non-negative powers of $(k,l)$, for $k,l$ coprime.

\subsection{A basis for the torus}
In this section we will construct the following basis for the $\rm{PGL}_2$ skein algebra of the torus $\Y(\Sigma_{1,0})$ analogous to the multicurve basis of the Kauffman bracket skein algebra $\Sq(\Sigma_{1,0}).$

\begin{theorem}\label{Basis torus}
Let
	\[K = \hexagon.\]
	Then
	\[\beta = \{\varnothing, K\} \cup \{(k,l)^{n} : \mathrm{gcd}(k,l) = 1, n \in \mathbb{Z}_{> 0}\}\]
	is a basis of $\Y(\Sigma_{1,0})$.
\end{theorem}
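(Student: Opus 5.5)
We prove spanning and linear independence separately.

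\emph{Spanning.} Let $D$ be any web diagram on $\Sigma_{1,0}$. The crossing relation writes $D$ as a combination of crossingless trivalent graphs on the torus, so it suffices to reduce planar webs. For such a web $G$ with $V$ vertices and $E = 3V/2$ edges, the Euler characteristic count on the torus gives $\sum_f (6 - \deg f)$ over the faces that are disks, up to a correction from non-disk faces. Suppose every complementary region is a disk. Then $V - E + F = 0$ forces the average face degree to be exactly $6$. So either some disk face has degree $\le 5$, or every face is a hexagon. A face of degree $1,2,3$ is removed by the monogon relation or by \eqref{ngon formulae}. A face of degree $4$ or $5$ is handled with the I-H relation: applied to an edge of the face, it lowers the face degree and produces terms with fewer vertices. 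The corresponding ``square'' and ``pentagon'' reductions follow from I-H together with the bigon and trigon formulas. Inducting on the number of vertices, we reduce to webs all of whose disk faces are hexagons, or to webs with a non-disk face.

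A cellular trivalent graph on the torus with all faces hexagons is a quotient of the hexagonal lattice. The key step is to show that any such graph with more than $2$ vertices reduces, via I-H moves on an edge (which create a square and a pentagon or similar), to graphs with fewer vertices plus a multiple of $K$. We expect this step to be the main obstacle: it requires a careful case analysis, or a reduction that flips an edge until the result has a small face. $K$ itself is the unique $2$-vertex hexagonal web.

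If some face is not a disk, there is an essential simple closed curve in the complement. Cutting along it places the web, minus its loop components, inside an annulus. By Proposition~\ref{JW injective on pants} (the spanning statement $\Y(\Sigma_{0,2}) \cong \mathbb{C}[a]$), the web then reduces to a polynomial in the core loop, which is parallel copies of a single $(k,l)$. Together with the trivial loops removed by the loop relation, this shows $\beta$ spans.

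\emph{Independence.} Apply $JW$ and use the multicurve basis of $\Sq(\Sigma_{1,0})$ from Proposition~\ref{properties}(iv). The image $JW((k,l)^n)$ is the $(k,l)$ curve to the power $2n$ plus lower-order terms, namely $((k,l)^2 - 1)^n$ up to sign conventions. The image $JW(K)$ is a combination of multicurves built from curves of slopes $(1,0)$, $(0,1)$, $(1,1)$ together with constants. Leading terms show that the $(k,l)^n$ and $\varnothing$ are independent after $JW$.

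$JW$ cannot be used alone, however, since the paper notes it is not injective; $K$ may lie in the span of the others' images. To separate $K$, we would use a second invariant that kills all loop powers but not $K$. One candidate is a functional given by evaluating the skein on an embedding into a closed $3$-manifold, or by a $\mathbb{Z}/2$-homology grading. Such a functional exists because webs have well-defined mod-$2$ homology classes. The class of $K$ and the class of $(k,l)^n$ both need to be computed to see whether this separates them. If it does not, we would instead construct a trace or state-sum, for example a Yamada-type evaluation on the torus $3$-coloring, in which $K$ evaluates nontrivially while a chosen combination of the loop elements vanishes.
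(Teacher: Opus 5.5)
Your proposal leaves open the two steps that carry the content of the theorem. The one in the independence half is a missing idea, not a detail.

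\textbf{Independence.} Everything hinges on showing $K\notin\mathrm{span}(\beta\setminus\{K\})$, and $JW$ cannot detect this, since $[2]K-(0,1)-(1,0)-(1,1)+[3]\varnothing\in\ker JW$. You need a linear functional on $\mathcal{S}_q^Y(\Sigma_{1,0})$ that vanishes on every multicurve but not on $K$, and you do not supply one.

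The mod-$2$ homology idea cannot provide it. A trivalent web is not a mod-$2$ cycle. More decisively, in any grading where diagrams and the defining relations are homogeneous, $K$ lies in the same degree as a multicurve. Resolving the single crossing of $(1,0)\cdot(0,1)$ gives $q^{2}$ times a simple closed curve of slope $\pm1$, plus $(q^{-2}-1)$ times the other smoothing, plus $[2]K$, and $q^{2},[2]\neq0$.

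The paper's key ingredient is a specialization $\hat{P}_G(A)$ of the Fendley--Krushkal flow polynomial for torus graphs. Up to a global normalization, it is a sum over spanning subgraphs $H\subset G$ of $(-1)^{E(G)-E(H)}[2]^{2n(H)}A^{s(H)}$. Here $n(H)$ is the first Betti number of $H$, and $s(H)=1$ exactly when the image of $H_1(H)\to H_1(\Sigma_{1,0})$ has rank two. I-H invariance is due to Fendley and Krushkal, and the monogon and loop relations are checked by a local pairing of spanning subgraphs. A multicurve has no rank-two subgraph, so it maps into $\mathbb{C}$, whereas $\hat{P}_K$ has degree one in $A$. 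The coefficient of $A$ is therefore the separating functional. Your ``Yamada-type state sum'' gestures toward this, but it lacks the variable marking homologically rank-two subgraphs, which is exactly what makes it work. Your treatment of the multicurves via $JW((k,l)^n)=((k,l)^2-\varnothing)^n$ agrees with the paper.

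\textbf{Spanning.} The step you flag as the main obstacle is left undone, and it is not routine. An I-H flip on an edge lowers by one the degrees of the two faces along the edge. It raises by one the degrees of the two faces meeting the edge only at an endpoint. On the torus these faces can coincide: every flip of $K$ returns a two-vertex web whose single face is again a hexagon. So ``flip until a small face appears'' needs an actual combinatorial argument, and the same caveat applies to your square and pentagon reductions.

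Nor is the two-vertex hexagonal web unique up to isotopy. Flipping an edge of $K$ changes the set of slopes of its cycles, and such webs agree with $K$ only modulo multicurves, via the I-H relation. The paper avoids the case analysis by invoking the results of Dewdney and Negami: two connected rank-two torus graphs with the same number of vertices are related by I-H moves. Hence a rank-two web with $2n$ vertices equals, modulo webs with fewer vertices, $K$ if $n=1$. If $n>1$ it equals $K$ with $n-1$ monogons attached, which is $0$.

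Finally, your dichotomy between ``every face is a disk'' and ``an essential curve in the complement'' misses one case: a rank-two component together with components lying in disks. Then some face is not a disk, yet no curve in the complement is essential in the torus. The fix is to first evaluate the components contained in disks to scalars, i.e.\ organize the argument by the homological rank of each component, as the paper does.
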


\begin{proof}
The proof follows from the following Lemma \ref{span torus} and Corollary \ref{LI beta}.
\end{proof}

\begin{lemma}\label{span torus}
$\beta$ is a spanning set of $\Y(\Sigma_{1,0})$.
\end{lemma}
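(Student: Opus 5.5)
We reduce an arbitrary web on $\Sigma_{1,0}$ to a linear combination of elements of $\beta$. The argument is an induction on a complexity measure: first the number of crossings, then the number of vertices.

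\emph{Removing crossings.} The crossing relation writes any diagram as a combination of crossingless webs, since every term on the right-hand side has fewer crossings. So it suffices to show that every planar (crossingless) trivalent web embedded in the torus lies in $\mathrm{span}(\beta)$. We induct on the number of vertices $V$.

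\emph{Vertexless case, $V=0$.} A crossingless web with no vertices is a disjoint union of simple closed curves. Trivial loops are removed by the loop relation. The remaining essential curves in the torus are pairwise disjoint, so they are all parallel, of some slope $(k,l)$ with $\gcd(k,l)=1$. Such a web is therefore $(k,l)^n$ or $\varnothing$.

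\emph{Reducing vertices, $V>0$.} Each component with vertices is trivalent and embedded in the torus, so $V$ is even and $E=3V/2$. Consider the faces of the embedded graph, together with any loop components. For each face $f$, let $n_f$ be the number of edge-sides on its boundary. Suppose first that every face is a disk. Then Euler's formula on the torus gives $V-E+F=0$, so $F=V/2$. Hence the average face degree is $\sum n_f/F = 2E/F = 6$.

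Consequently, either some face has at most $5$ sides, or every face is a hexagon. A disk face with $1$, $2$ or $3$ sides is removed by the monogon relation or by the relations \eqref{ngon formulae}; each of these strictly lowers $V$. A square or pentagon face is handled by applying the I-H relation to one of its edges. This either shrinks the face or merges it, and the error terms $\frac{1}{[2]}(\cdots)$ have fewer vertices. The standard argument goes as follows. Apply I-H to an edge of an $n$-gon face with $n\le 5$; this produces a face with $n-1$ sides plus lower-vertex terms. Repeat until a trigon or bigon appears.

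If some face is not a disk, it contains an essential curve or an annulus. A component of the web then lies in an annulus or a disk region of the torus. In a disk region, planar-web evaluation reduces that component to a scalar; this uses the same Euler argument on the sphere, where some face has at most $5$ sides. In an annulus region, Proposition \ref{JW injective on pants} gives $\Y(\Sigma_{0,2})\cong\mathbb{C}[a]$, so the component becomes a polynomial in parallel core curves, and we are back in the case $V=0$ for that piece.

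\emph{All-hexagon case.} It remains to handle a web whose faces are all hexagonal disks. I expect this to be the main obstacle. The plan is to apply the I-H relation to an edge so as to move between hexagonal tilings. The aim is to show that every such tiling with $V>2$ equals, modulo webs with fewer vertices, a tiling containing a square face. The square face can then be reduced as above. For example, applying I-H to one edge of a hexagon turns two adjacent hexagons into a pentagon and a heptagon, or into a square and an octagon, depending on the configuration. A careful choice should produce a face with at most $5$ sides.

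The base case of this reduction is $V=2$. Here the unique trivalent graph embedded in the torus with a single hexagonal face is the theta graph, which is $K$, possibly after a change of slope. One then uses the I-H relation to rewrite theta graphs of other slopes as $K$ plus lower terms. An alternative is to note that the mapping class group acts on the torus, so all these theta webs correspond to $K$ up to the change-of-coordinates automorphism. Their expansion in $\beta$ comes from I-H, which relates any two such theta webs modulo curves.

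Verifying that an I-H move always creates a face with at most $5$ sides, and that the $V=2$ theta webs of all slopes reduce to $K$ plus multicurves, is the delicate combinatorial step.
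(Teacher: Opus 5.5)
Your route differs from the paper's. The paper sorts components by the rank of their image in $H_1(\Sigma_{1,0};\mathbb{Z})$ and handles rank zero and rank one via $\mathcal{S}^Y_q(\Sigma_{0,1})=\mathbb{C}$ and $\mathcal{S}^Y_q(\Sigma_{0,2})=\mathbb{C}[a]$. For a connected rank-two web it quotes the theorem of Dewdney and Negami that rank-two torus webs with the same number of vertices are connected by I-H moves. A $2n$-vertex web is then flipped, modulo webs with fewer vertices, to $K$ when $n=1$, and to $K$ with $n-1$ lollipops attached when $n>1$, which vanishes by the monogon relation. Your Euler-characteristic induction would avoid that citation. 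Your treatment of non-cellular webs and of faces of degree at most $5$ is essentially fine; for the latter, check that once monogons are gone such a face has simple boundary, so flipping one of its edges lowers its degree by exactly one. However, the two steps that carry the real content are stated as expectations rather than proved, so the induction does not close as written.

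\emph{All-hexagon case with $V\ge 4$.} A flip on an edge $e$ lowers by one the degrees of the two faces along $e$ and raises by one the degrees of the two faces at the endpoints of $e$ (generically two pentagons and two heptagons, not one of each). These four faces need not be distinct, and when the two pairs agree as multisets the flip just produces another all-hexagon web. This really happens. Take the honeycomb modulo the index-two sublattice $\langle\delta_1,2\delta_2\rangle$ of its translation lattice $\langle\delta_1,\delta_2\rangle$; this is a two-face tiling, and flipping any edge that separates the two faces leaves both hexagonal. So your ``careful choice'' needs an argument. For instance, suppose no flip created a face of degree at most $5$. Then in the dual triangulation every edge $xy$ would have apices $\{x,y\}$ as a multiset, so every triangle would have a repeated corner and hence a loop side. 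Both triangles adjacent to a loop at $a$ are then $(a,a,a)$, and by connectedness $F=1$, i.e.\ $V=2$.

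\emph{Case $V=2$.} You need every cellular theta web to be I-H equivalent to $K$ modulo multicurves. The mapping class group remark does not supply this: knowing that $h_*(K)$ is a theta web gives no expansion of it in $\beta$. Indeed, the paper's proof that $h_*(w)=w$ uses exactly this I-H equivalence. One way to prove it runs as follows.
\begin{itemize}
\item The three cycles of a cellular theta have slopes $\{\alpha,\beta,\alpha+\beta\}$ with $\det(\alpha,\beta)=\pm1$, and the web is determined up to isotopy by these slopes.
\item Flipping the edge shared by the $\alpha$- and $\beta$-cycles replaces $\alpha+\beta$ by $\alpha-\beta$, with only vertexless error terms.
\item The Euclidean algorithm then reaches the slopes of $K$.
\end{itemize}
With these two arguments supplied, or with the flip-connectivity theorem the paper cites, your approach would give a complete proof.
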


\begin{proof}
Any element in $\Y(\Sigma_{1,0})$ may be rewritten as a linear combination of diagrams without crossings. It suffices to show that a diagram that is a union of crossingless connected components is in the span of $\beta.$ Consider such a diagram $D$ in $\Y(\Sigma_{1,0}).$ For each connected component $G,$ the inclusion $i : G \to \Sigma_{1,0}$ induces a homomorphism of first homology groups $i_{\ast} : H_{1}(G; \mathbb{Z}) \to H_{1}(\Sigma_{1,0}; \mathbb{Z})$. We will denote the \textit{rank} of $G$ by the rank of the image of $i_{\ast}.$

If all components $G$ of $D$ are rank-zero graphs, then each component is contained in a disk and can be reduced to a scalar.

If one component $G$ of $D$ is a rank-one graph on $\Sigma_{1,0}$, then any other component $F$ of $D$ is contained in $\Sigma \setminus G$ and so $F$ is either a rank-one graph or rank-zero graph. In this case, $D$ is a union of graphs of rank zero or rank one. Each component is contained in an annulus or in a disk and is in the span of multicurves in $\Y(\Sigma_{1,0})$ by using the fact that $\Y(\Sigma_{0,2})=\mathbb{C}[a]$ and $\Y(\Sigma_{0,1})=\mathbb{C}$ and then applying part (ii) of Proposition \ref{properties}.

If one component $G$ of $D$ is a rank-two graph on $\Sigma_{1,0}$ then all other components are contained in $\Sigma \setminus G$ and must be rank-zero graphs. So any component other than $G$ can be reduced to a scalar. To finish the proof, we must show that a connected rank-two graph $G$ on $\Sigma_{1,0}$ can be rewritten as a linear combination of $\beta.$ To do this, we recall \cite{Dew73,Neg94} that two rank-two graphs with an equal number of vertices are related by I-H moves.

If $G$ is a connected rank-two graph on $\Sigma_{1,0}$ with exactly $2$ vertices, then since $K$ is also a rank-two graph on $\Sigma_{1,0}$ with 2 vertices,  we can apply I-H skein relations to $G$ in order to rewrite it as $G=K+G_0 \in \mathrm{span}(\beta)$ where $G_0$ is a linear combination of multicurves.

We then proceed by induction. If $G$ is a connected rank-two graph on $\Sigma_{1,0}$ with $2n$ vertices for $n>1$ then we can use I-H skein relations to rewrite $G$ as the following graph $K_{n - 1}$ plus terms with fewer vertices:

\[K_{n - 1} =\ranktwomonogon.\]

Since $K_{n - 1} = 0$ by the monogon relation, we have rewritten $G$ as a linear combination of terms with fewer vertices. By induction, we can rewrite $G$ as a linear combination of multicurves and $K.$ So $G \in \mathrm{span}(\beta).$

\end{proof}

Showing that $\beta$ is linearly independent requires a more subtle argument due to two main reasons. First, although we will use the map $JW: \Y(\Sigma_{1,0})\rightarrow \Sq(\Sigma_{1,0})$ to show that the multicurve elements of $\beta$ are linearly independent, it turns out that $JW(K)$ is in $\text{span}(JW(\beta\setminus\{K\}))$ so it is not immediately clear that $K$ is linearly independent from the multicurves. Second, the defining skein relations for $\rm{PGL}_2$ webs do not satisfy an easy notion of confluence in the sense of \cite{SW07} in the same way as the Kauffman bracket relations or even the Kuperberg webs \cite{Kup96}. Instead, we will make use of a specialization of Fendley and Krushkal's version of the flow polynomial for torus graphs from \cite{FK23}.

In Eq. (3.4) of \cite{FK23}, Fendley and Krushkal define $P_G(Y,W,A)$ which is a polynomial invariant for a torus graph $G.$ Here we will use a renormalized version of $P_G$ and use the specialization $Y=[2]^2, W=1$ to yield a polynomial invariant $\hat{P}_G(A)$ for elements $G$ of $\Y(\Sigma_{1,0})$ represented by a torus graph $G.$ We next describe the definition of $\hat{P}_G(A).$

Given a crossingless web on $\Sigma_{1,0}$ we view it as a graph $G$ where any loop components are considered to consist of 1 vertex and 1 edge. For any subgraph $H \subset G$ let $n(H)=\mathrm{rank}(H_1(H;\mathbb{Z})),$ which does not depend on the embedding of $H$, and let $s(H)=1$ if $H$ is a rank-two graph on $\Sigma_{1,0}$ and $s(H)=0$ if $H$ is a rank-zero or rank-one graph on $\Sigma_{1,0}$. Let $V(G)$ denote the number of trivalent vertices of $G$ and let $E(G)$ and $E(H)$ denote the numbers of edges of $G$ and $H$, respectively.  Then the invariant $\hat{P}_G\in \mathbb{C}[A]$ is defined as follows.

\begin{equation}\label{hat p}
\hat{P}_G(A)=\left(\frac{-1}{[2]} \right)^{E(G) - V(G)} \sum_{H \subset G} (-1)^{E(G)-E(H)} [2]^{2n(H)}A^{s(H)},
\end{equation}
where the sum is taken over all spanning subgraphs $H$ of $G.$

\begin{proposition}\label{invariant}
The assignment $G \mapsto \hat{P}_G(A)$ extends to a well-defined $\mathbb{C}\text{-linear}$ homomorphism $\Y(\Sigma_{1,0}) \rightarrow \mathbb{C}[A].$
\end{proposition}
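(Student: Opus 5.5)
To prove the statement, I define $\hat{P}$ on crossingless webs by Equation (\ref{hat p}), extend it to all diagrams by resolving crossings with the crossing relation, and show that the result respects the defining relations. It suffices to check four things: (a) $\hat P$ is invariant under surface isotopy of crossingless diagrams; (b) it respects the monogon, loop and I-H relations among crossingless webs; (c) it is compatible with the bigon and trigon formulas (\ref{ngon formulae}), which are used in reductions; and (d) defining the value of a diagram with crossings by expanding every crossing gives something independent of the diagram, i.e. it respects the Reidemeister-type moves of webs on $\Sigma_{1,0}$. Isotopy invariance is immediate, since $n(H)$ and $s(H)$ depend only on the embedded subgraph up to isotopy.

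The local relations I would handle by a contraction–deletion recursion. For an edge $e$ of $G$, split the sum over $H$ according to whether $e\in H$. For a non-loop edge, deleting versus contracting $e$ gives the usual flow-polynomial recursion, with the $A^{s(H)}$ weight unchanged because contracting a non-loop edge preserves the image in $H_1(\Sigma_{1,0})$. The checks are then as follows.

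\emph{Loop relation.} A trivial loop is one vertex and one edge, and $E-V=0$. The subgraphs $H$ without and with the edge contribute $-1+[2]^2=[3]$, with $s$ unaffected because the loop is null-homologous.

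\emph{Monogon relation.} The monogon is attached by a bridge edge $b$. Subgraphs containing $b$ and subgraphs omitting $b$ have the same $n$ and $s$, so the signs $(-1)^{E(G)-E(H)}$ cancel in pairs and the sum is $0$.

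\emph{I-H relation.} The I and H tangles differ only in a middle edge $e$. The terms omitting $e$ reproduce the resolutions appearing in the relation; this is where the $\frac{1}{[2]}$ coefficients and the normalization $(-1/[2])^{E-V}$ are designed to match. The terms containing $e$ are identical for the I and H pictures after contracting $e$ to a 4-valent vertex. Here one must check that $s(H)$ is the same in both, which holds since the contracted graphs coincide as embedded graphs.

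The cleaner route to the I-H check is to note that Fendley–Krushkal \cite{FK23} already prove that their $P_G(Y,W,A)$ satisfies the chromatic-algebra relations for torus graphs. I would cite that result and only verify that the renormalization factor $(-1/[2])^{E-V}$ and the specialization $Y=[2]^2$, $W=1$ convert their relations into ours, with the bigon and trigon coefficients $\frac{1-[3]}{[2]}$ and $\frac{2-[3]}{[2]}$ serving as consistency checks.

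The main obstacle is (d), well-definedness for diagrams with crossings. Expanding crossings produces crossingless diagrams that are no longer embedded graphs in the original form, and for an arbitrary diagram on the torus one must show that the value is independent of the chosen projection. My plan follows the standard argument that the crossing relation is a consequence of the trivalent (chromatic) relations in the sense of \cite{FK10}. The crossing relation defines a braiding compatible with I-H, monogon and loop, so Reidemeister II and III and the vertex-slide moves already hold in the planar quotient, by local computations in a disk. Since those moves are local, they remain valid on $\Sigma_{1,0}$ once $\hat P$ is known to satisfy the planar relations (b).

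I would therefore reduce to verifying Reidemeister II and III and the vertex-slide move as identities among crossingless local pictures in a disk. These identities are consequences of the loop, monogon and I-H relations. I expect the delicate point to be confirming that these local identities use only relations already verified for $\hat P$, with no global torus input.
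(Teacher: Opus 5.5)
Your proposal is correct and follows the same route as the paper. Both extend $\hat P$ to diagrams with crossings by expanding them, check the loop relation via the count $-1+[2]^2=[3]$, and check the monogon relation by pairing subgraphs with and without the bridge edge (the paper's cancellation of $G_1$ with $G_3$ and of $G_2$ with $G_4$). Both also obtain the I-H relation by citing Fendley--Krushkal.

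Three points go beyond what the paper writes:
\begin{enumerate}[(i)]
\item Your deletion--contraction sketch of I-H is a correct self-contained alternative to the citation.
\item Your locality argument for Reidemeister-type moves is correct; the paper just asserts that checking the three planar relations suffices.
\item Your convention that a loop component has $E-V=0$ is exactly what the paper's loop computation needs. Counting only trivalent vertices, as the paper's wording suggests, would give $-[3]/[2]$ instead of $[3]$.
\end{enumerate}
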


\begin{proof}
The definition of the assignment $G \mapsto \hat{P}_G(A)$ for crossingless $G$ extends to diagrams with crossings by requiring that we expand each crossing using the crossing relation before applying $\hat{P}$. To check that the map is well-defined on the skein module, we only need to check that it respects the I-H relation, monogon relation, and loop relation. 

The polynomial $P_{G}(Y,W,A)$ from \cite{FK23} already satisfies the I-H relation, as they have verified. Consequently, our specialization $\hat{P}_G(A)=P_G([2]^2,1,A)$ satisfies the I-H relation as well. We will check that the monogon and loop skein relations are satisfied. Let $M(H)$ denote the monomial given by the summand corresponding to the subgraph $H$ in Equation (\ref{hat p}).\\

Suppose that $\tilde{G}$ is a crossingless graph on the torus and contains a monogon. Let $G$ denote the graph given by removing the monogon from $\tilde{G}$. If $H$ is a spanning subgraph of $G,$ let $M(H)$ denote the monomial given by the summand corresponding to the subgraph $H$ in Equation (\ref{hat p}).

In a neighborhood $U$ of the monogon of $\tilde{G}$, a spanning subgraph $\tilde{H}$ of $\tilde{G}$ looks like one of the following four diagrams.

\[\pgi, \quad \pgii, \quad \pgiii, \quad \pgiv.\]

Here, the edges at the bottom of the diagram are drawn only to represent placeholders for the rest of $\tilde{H}.$

We can partition $\hat{P}_{\tilde{G}}$ into sums of subgraphs based on how they look in the neighborhood $U$. We may compute

\begin{align*}
\hat{P}_{\tilde{G}}(A)&=\left( \frac{-1}{[2]} \right)^{E(\tilde{G})-V(\tilde{G})}\sum_{\tilde{H}\subset \tilde{G}}M(\tilde{H})\\
&= \left( \frac{-1}{[2]} \right)^{E(\tilde{G})-V(\tilde{G})}\sum_{H\subset G}\sum_{i=1}^4 M(\tilde{H}_i),\\
\end{align*}

where $\tilde{H}_i=H \cup G_i$ is the unique decomposition of a spanning subgraph of $\tilde{G}$ into the union of a spanning subgraph $H$ of $G$ and one of the graphs $G_i=\tilde{H} \cap U$ from above. Since $\tilde{H}_1$ and $\tilde{H}_3$ have the same ranks and nullities, but $E(\tilde{H}_3)=E(\tilde{H}_1)+1,$ we have that the contributions of $M(\tilde{H}_1)$ and $M(\tilde{H}_3)$ cancel. Similarly, since $\tilde{H}_4$ has one more edge than $\tilde{H}_2$ but the same rank and nullity, the contributions from these two graphs cancel as well. Thus, if $\tilde{G}$ contains a monogon, we have $\hat{P}_{\tilde{G}}(A)=0,$ as required.

Similarly, we may consider a graph $\tilde{G}$ with a trivial loop component, and a graph $G$ given by removing the trivial loop component. Let $\tilde{H} \subseteq \tilde{G}$ be a spanning subgraph. In a neighborhood $U$ of the trivial component of $\tilde{G}$, $\tilde{H} \cap U$ consists of either a single vertex or the trivial loop consisting of a single vertex and a single edge. We can partition $\hat{P}_{\tilde{G}}$ into sums of subgraphs based on how they look in the neighborhood $U$:
\begin{align*}
\hat{P}_{\tilde{G}}(A) & = \left( \frac{-1}{[2]} \right)^{E(\tilde{G}) - V(\tilde{G})} \sum_{\tilde{H} \subseteq \tilde{G}} M(\tilde{H})\\
&=\left( \frac{-1}{[2]} \right)^{E(\tilde{G})-V(\tilde{G})}\sum_{H\subset G}\sum_{i=1}^2 M(\tilde{H}_i),\\
\end{align*}

where $\tilde{H}_1$ satisfies $\tilde{H}_1\cap U$ consists of a single vertex and $\tilde{H}_{2}$ satisfies $\tilde{H}_2 \cap U$ consists of the trivial loop consisting of a single vertex and a single edge.

Since $\tilde{H}_{2}$ has the same rank as $\tilde{H}_{1}$, one more edge, and nullity one greater, we have the equality
\[M(\tilde{H}_{2}) = -[2]^{2}M(\tilde{H}_{1}).\]
Thus,
\begin{align*}
P_{\tilde{G}}(A) & = \left( - \frac{1}{[2]} \right)^{E(\tilde{G}) - V(\tilde{G})} \sum_{H \subset G} \left( 1 - [2]^{2} \right) M(\tilde{H}_{1})\\
& = \left( - \frac{1}{[2]} \right)^{E(G) - V(G)} \sum_{H \subset G} -\left( 1 - [2]^{2} \right) M(G)\\
& = ([2] - 1) \hat{P}_{G}(A)\\
& = [3] \hat{P}_{G}(A)\\
\end{align*}
Therefore, all planar skein relations are satisfied. It follows that $G \mapsto \hat{P}_{G}(A)$ is well-defined on $\Y(\Sigma_{1,0})$.
\end{proof}

\begin{corollary}\label{LI beta}
$\beta$ is a linearly independent subset of $\Y(\Sigma_{1,0}).$
\end{corollary}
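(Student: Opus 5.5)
Suppose we have a linear relation
\[
c_0\varnothing + c_K K + \sum_{(k,l),n} c_{(k,l),n}(k,l)^n = 0
\]
in $\Y(\Sigma_{1,0})$. The plan has two steps: first use $JW$ to kill all multicurve coefficients up to a correction coming from $K$, then use $\hat P$ from Proposition \ref{invariant} to show $c_K = 0$.

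\textbf{Step 1: apply $JW$.} Applying $JW$ gives a relation in $\Sq(\Sigma_{1,0})$. Each $JW((k,l)^n)$ is $2n$ parallel copies of $(k,l)$ with Jones--Wenzl projectors inserted. Expanding the projectors, this equals $(k,l)^{2n}$ plus multicurves with strictly fewer parallel copies of the same slope. By part (iv) of Proposition \ref{properties}, multicurves form a basis of $\Sq(\Sigma_{1,0})$. Hence, by a triangularity argument ordering by number of components within each slope, the images $JW(\varnothing)$ and $JW((k,l)^n)$ are linearly independent.

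So if $c_K = 0$, all coefficients vanish. It therefore remains to show $c_K = 0$. Because $JW(K)$ lies in the span of the other images, $JW$ alone cannot detect $c_K$; this is why we turn to $\hat P$.

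\textbf{Step 2: apply $\hat P$.} Apply the linear map of Proposition \ref{invariant} to the relation. For $\varnothing$ and every multicurve $(k,l)^n$, every spanning subgraph $H$ has rank at most one, so $s(H) = 0$ and $\hat P$ is a constant polynomial in $A$. For $K$, the whole graph $K$ is a rank-two spanning subgraph of itself, so $A$ appears. We must check that the coefficient of $A$ in $\hat P_K(A)$ is nonzero.

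$K$ has $2$ vertices and $3$ edges. Note that $\Sigma_{1,0}\setminus K$ is a hexagonal disk, so $K$ fills the torus, and $H_1(K) \cong \mathbb{Z}^2$ maps onto $H_1(\Sigma_{1,0})$. Removing any edge of $K$ leaves a graph with nullity one, hence rank at most one. So the only spanning subgraph with $s(H) = 1$ is $K$ itself, with $n(K) = 2$. The coefficient of $A$ in $\hat P_K$ is therefore
\[
\left(\frac{-1}{[2]}\right)^{1}[2]^4 = -[2]^3,
\]
which is nonzero since $q + q^{-1} \neq 0$.

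Comparing coefficients of $A$ in $\hat P$ applied to the relation then gives $c_K \cdot (-[2]^3) = 0$, so $c_K = 0$. Step 1 then finishes the proof.

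\textbf{Main obstacle.} The delicate point is the triangularity claim for $JW((k,l)^n)$. This needs an argument that, in the annulus, the closure of the second Jones--Wenzl projector raised to the $n$th power is a monic polynomial of degree $2n$ in the core curve. This follows from Proposition \ref{JW injective on pants} together with the isomorphism $\Sq(\Sigma_{0,2}) \cong \mathbb{C}[a]$, or directly: $JW(a) = a^2 + \tfrac{1}{[2]}(-[2]) = a^2 - 1$, so $JW(a^n) = (a^2-1)^n$ is monic of degree $2n$. Pushing this forward along the annular neighborhood of $(k,l)$ gives the claim.
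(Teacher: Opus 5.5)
Your proof is correct and follows essentially the same route as the paper. The invariant $\hat P$ separates $K$ from the multicurves, and you usefully make explicit that the $A$-coefficient of $\hat P_K$ is $-[2]^3\neq 0$, which the paper only states as ``degree one.'' The identity $JW((k,l)^n)=((k,l)^2-\varnothing)^n$, combined with the multicurve basis of $\Sq(\Sigma_{1,0})$, then gives independence of the rest of $\beta$. One small slip: injectivity of $JW$ on the annulus is not what makes $JW(a^n)$ monic of degree $2n$, but your direct computation $JW(a)=a^2-1$ establishes this.
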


\begin{proof}

First, we will use the invariant $\hat{P}_K(A)$ to observe that the element $K$ is not in the span of $\beta \setminus \{K\}$, which consists of the multicurve elements of $\beta$. We observe that since $K$ is a rank-two graph on $\Sigma_{1,0}$ containing no other rank-two subgraph, $\hat{P}_K(A)$ is a degree-one polynomial of $\mathbb{C}[A].$ On the other hand, any multicurve consists of only graphs of rank zero or rank one, and so for any multicurve $c,$ $\hat{P}_c(A) \in \mathbb{C}.$ Since by Proposition \ref{invariant}, $\hat{P}$ defines a linear map, we conclude that $K$ is not in the span of the multicurves.

To finish the proof, we verify that $\beta \setminus \{K\}$ is linearly independent by showing that $JW$ maps $\beta \setminus \{K\}$ injectively to a linearly independent set in $\Sq(\Sigma_{1,0}).$ We see that for each torus curve $(k,l) \in \Y(\Sigma_{1,0})$ for $k,l$ coprime we have by the definition of the $JW$ map, $JW((k,l))=(k,l)^2 - \varnothing \in \Sq(\Sigma_{1,0}).$

Thus,

\[JW((k,l)^n)=((k,l)^2-\varnothing)^n.\]

By using the fact that the set of multicurves is a linearly independent set in $\Sq(\Sigma_{1,0}),$ we conclude that $\beta \setminus \{K\}$ is linearly independent in $\Y(\Sigma_{1,0}).$ Now, since $K$ is not in the span of $\beta \setminus \{K\}$ we have that $\beta$ is linearly independent.
\end{proof}

We observe the following corollary, which is in contrast to the injectivity of $JW$ in the cases of $\Sigma=\Sigma_{0,1},\Sigma_{0,2},\Sigma_{0,3}$ from \cite{Chb12}.

\begin{corollary}\label{JW not injective}
The map $JW: \Y(\Sigma_{1,0}) \rightarrow \Sq(\Sigma_{1,0})$ is not injective for the torus. Its kernel is 1-dimensional.
\end{corollary}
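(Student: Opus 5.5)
The plan is to use the basis $\beta$ of Theorem \ref{Basis torus} and compare images under $JW$. The proof of Corollary \ref{LI beta} shows that $JW$ restricted to $\mathrm{span}(\beta\setminus\{K\})$ is injective. Indeed, $JW((k,l)^n)=((k,l)^2-\varnothing)^n$ is a polynomial in $(k,l)$ of degree $2n$ with leading term $(k,l)^{2n}$. These leading terms are distinct multicurves for distinct pairs (slope, $n$), so a triangularity argument with respect to the multicurve basis of $\Sq(\Sigma_{1,0})$ gives linear independence of the images. Hence $\ker(JW)\cap \mathrm{span}(\beta\setminus\{K\})=0$, and the kernel has dimension at most $1$ because $\beta\setminus\{K\}$ has codimension one in $\beta$.

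To get dimension exactly $1$, I will exhibit a nonzero kernel element. The paper already asserts that $JW(K)\in \mathrm{span}(JW(\beta\setminus\{K\}))$, so I will compute $JW(K)$ explicitly. Expanding the three edges of $K$ with $\jwstrand = \jwidentity + \frac{1}{[2]}\jwreturn$ gives $2^3=8$ states, corresponding to the pictures $\jwktildezero$ through $\jwktildeseven$ in the punctured-torus setting, here with the puncture filled. Each state is a crossingless diagram on the torus, so it is a product of the curves $(1,0)$, $(0,1)$, $(1,1)$ (or $(1,-1)$ depending on the convention for the hexagon) and trivial loops, with coefficients powers of $1/[2]$ and of $-[2]$.

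Thus $JW(K)$ is some combination $c_0\varnothing + c_1\,(1,0)^2+c_2\,(0,1)^2+c_3\,(1,1)^2+\dots$. The all-identity state resolves the vertices into a single closed curve pattern giving two parallel copies of slope-type curves; rewriting these in the images $JW((k,l))=(k,l)^2-\varnothing$ produces an expression
\[JW(K)=JW\bigl(a\,(1,0)+b\,(0,1)+c\,(1,1)+d\,\varnothing\bigr)\]
for explicit scalars $a,b,c,d$. I expect $a=b=c=1/[2]$ and $d=-[3]/[2]$, matching the element $w=[2]K-(0,1)-(1,0)-(1,1)+[3]\varnothing$ introduced earlier. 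Then $w\neq 0$ by Theorem \ref{Basis torus}, since its coefficient on $K$ is $[2]\neq0$, and $JW(w)=0$. This shows $JW$ is not injective and that $\ker JW=\mathrm{span}\{w\}$.

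The main obstacle is the state-sum bookkeeping for $JW(K)$. Specifically, one must identify which of the 8 states give essential curves of which slopes and which give trivial loops, while keeping track of signs from the loop value $-[2]$. I would do this by placing $K$ as the hexagon picture and tracking arcs state by state. As a check, I would apply the invariant $\hat P$: it need not vanish on $w$, but it confirms that $w$ involves $K$ nontrivially.
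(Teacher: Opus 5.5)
Your proposal is correct and follows the paper's argument. Injectivity of $JW$ on $\mathrm{span}(\beta\setminus\{K\})$ (via $JW((k,l)^n)=((k,l)^2-\varnothing)^n$) bounds the kernel's dimension by one, and $w=[2]K-(0,1)-(1,0)-(1,1)+[3]\varnothing$ is a nonzero kernel element, checked by the same kind of state sum the paper does in Lemma \ref{w^2}.

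One correction to your bookkeeping, which also supplies the verification of $JW(w)=0$ that you left as an expectation:
\begin{itemize}
\item The all-identity state is the boundary of a neighborhood of $K$. It bounds the hexagonal face, so it is a trivial loop contributing $-[2]\varnothing$; in the punctured-torus version it is the curve $x$.
\item The doubled curves $(1,0)^2,(0,1)^2,(1,1)^2$ come from the three states with exactly one cup--cap, each with coefficient $\frac{1}{[2]}$.
\item The states with two cup--caps contribute $3\cdot\frac{-[2]}{[2]^2}$, and the state with three contributes $\frac{[2]^2}{[2]^3}$.
\end{itemize}
Summing gives
\[
JW(K)=\frac{1}{[2]}\bigl((1,0)^2+(0,1)^2+(1,1)^2\bigr)-[2]-\frac{2}{[2]},
\]
which recovers your predicted $a=b=c=1/[2]$ and $d=-[3]/[2]$.
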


\begin{proof}

Let

\begin{equation}\label{def of w}
w=[2]K-(0,1)-(1,0)-(1,1)+[3] \varnothing \in \Y(\Sigma_{1,0}).
\end{equation}

Since $[2] \neq 0$ and $K$ is not in the span of the multicurves we have that $w \neq 0.$ However, a routine computation yields that $JW(w)=0\in \Sq(\Sigma_{1,0})$ (see the proof of Lemma \ref{w^2} for a similar computation). Therefore, $JW: \Y(\Sigma_{1,0}) \rightarrow \Sq(\Sigma_{1,0})$ is not injective. Since $JW$ is injective when restricted to the multicurve elements of the basis $\beta,$ we have that the kernel of $JW$ is precisely $\text{span}(w).$
\end{proof}

\subsection{Kauffman bracket skein algebra of the torus}

We first recall the presentation of the Kauffman bracket skein algebra of the torus given by the product-to-sum formula of Frohman-Gelca \cite{FG00}. The Chebyshev polynomials of the first kind, $T_n(x)$, are given recursively by

\[T_{0}(x) = 2, T_{1}(x) = x, T_{n}(x) = xT_{n - 1}(x) - T_{n - 2}(x).\]

The multicurve basis for $\Sq(\Sigma_{1,0})$ is given by the torus links $\{(k,l)^{n} : \mathrm{gcd}(k,l) = 1, n \in \mathbb{Z}_{\geq 0}\}.$ There is a unitriangular change of basis to the Frohman-Gelca Chebyshev basis given by $(k,l)^n \mapsto T_n((k,l)).$ The multiplication of these basis elements is given by the following product-to-sum presentation.

We use the following notation.

Let $k,l \in \mathbb{Z}$. Let $g = \mathrm{gcd}(k,l)$, with the convention $\mathrm{gcd}(0,0) = 0$. Define
\[(k,l)_{T} = T_g \left( \frac{k}{g}, \frac{l}{g} \right)\]
for $g$ nonzero and $(0,0)_{T} = 2\varnothing$.
\begin{theorem}\cite{FG00}
	For any $k,l,r,s \in \mathbb{Z}$,
	\[(k,l)_{T} \cdot (r,s)_{T} = q^{-\frac{1}{2}\begin{vmatrix} k & l \\ r & s \end{vmatrix}}(k + r,l + s)_{T} + q^{\frac{1}{2}\begin{vmatrix} k & l \\ r & s \end{vmatrix}}(k - r,l - s)_{T}.\]
\end{theorem}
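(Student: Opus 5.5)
The plan is to compare $\Sq(\Sigma_{1,0})$ with the quantum torus and to transfer the obvious product-to-sum formula there.

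Let $\mathbb{T}_q$ be the algebra with basis $e_{k,l}$ for $(k,l)\in\mathbb{Z}^2$ and multiplication $e_{k,l}e_{r,s}=q^{-\frac12(ks-lr)}e_{k+r,l+s}$. In $\mathbb{T}_q$ the elements $\theta(k,l)=e_{k,l}+e_{-k,-l}$ satisfy the claimed identity directly:
\[
\theta(k,l)\,\theta(r,s)=q^{-\frac12\det}\theta(k+r,l+s)+q^{\frac12\det}\theta(k-r,l-s),
\]
where $\det=\begin{vmatrix} k & l \\ r & s \end{vmatrix}$. So it suffices to prove two things.

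First, $(k,l)_T\mapsto\theta(k,l)$ defines a linear injection $\Sq(\Sigma_{1,0})\to\mathbb{T}_q$. This holds because, by Proposition \ref{properties}(iv), the elements $(k,l)_T$ are related to the multicurve basis by a unitriangular change of basis. Their images $\theta(k,l)$, taken over pairs up to sign, are linearly independent.

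Second, this map is multiplicative. Equivalently, the product-to-sum formula itself holds in $\Sq(\Sigma_{1,0})$, and that is what I will prove directly.

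\textbf{Symmetry reduction.} The mapping class group $SL_2(\mathbb{Z})$ acts on $\Sq(\Sigma_{1,0})$ by algebra automorphisms. It sends $(k,l)_T$ to $(k',l')_T$ with $(k',l')=(k,l)A$, and both the determinant and the gcd are preserved. Using this action, I first move any primitive $(k,l)$ to $(1,0)$.

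\textbf{Primitive case, by induction on $|\det|$.} Here $g=1$, so I need the formula for $(1,0)\cdot(r,s)_T$. I do strong induction on $|s|=|\det|$.
\begin{itemize}
\item[$\bullet$] For $s=0$ the two curves are disjoint and parallel. The formula then reduces to the Chebyshev relation $xT_n=T_{n+1}+T_{n-1}$.
\item[$\bullet$] For $(r,s)$ primitive with $|s|=1$, the curves meet exactly once. One Kauffman bracket resolution gives $q^{\mp\frac12}(r\pm1,s)$, which is the base case.
\item[$\bullet$] For general primitive $(r,s)$, I choose $(a,b),(c,d)$ with $(a,b)+(c,d)=(r,s)$, $ad-bc=\pm1$ and $|b|,|d|<|s|$. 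By the $|\det|=1$ case, $(r,s)$ is a combination of $(a,b)(c,d)$ and $(a-c,b-d)$. I multiply by $(1,0)$ on the left and use associativity. The induction hypothesis then expands each product $(1,0)(a,b)$, $(1,0)(a-c,b-d)$, and the resulting products with $(c,d)$, since all of these have smaller $|\det|$. Collecting the terms should telescope to the desired two-term formula.
\end{itemize}

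\textbf{Non-primitive $(r,s)_T$.} Write $(r,s)_T=T_n(p)$ with $p$ primitive and use the recursion $T_n=pT_{n-1}-T_{n-2}$. Applying the primitive case to $(1,0)\cdot p$ and induction on $n$ gives the formula. This completes the case where the first factor is primitive.

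\textbf{Non-primitive first factor.} I induct on $g$ via $(gk,gl)_T=(k,l)\,((g-1)k,(g-1)l)_T-((g-2)k,(g-2)l)_T$. I then multiply on the right by $(r,s)_T$ and use associativity together with the primitive-first-factor case.

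\textbf{Main obstacle.} I expect the hardest step to be the bookkeeping of $q$-exponents in the telescoping step of the primitive induction. The four products produce terms with exponents $\pm\frac12$ times various determinants. Verifying that the cross terms cancel requires the identity $\det((1,0),(a,b))+\det((1,0),(c,d))=\det((1,0),(r,s))$ and careful tracking of signs. The way I would organize this is to perform the computation inside $\mathbb{T}_q$, where the identities are automatic, and to check only that every product used has already been established in the skein algebra by the induction.
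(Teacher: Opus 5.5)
The paper gives no proof of this statement; it quotes it from Frohman--Gelca, so your argument has to stand on its own. Your framework is reasonable: the $SL_2(\mathbb{Z})$ reduction, the base cases $|\det|\le 1$, the Farey decomposition with associativity, and checking products through the linear injection into $\mathbb{T}_q$. That injection also removes the exponent bookkeeping you flag as the main obstacle.

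The real problem is that the induction in the primitive case is not well founded as written. Since $\det((1\pm a,\pm b),(c,d))=d\pm\epsilon$ with $\epsilon=ad-bc=\pm1$, one of the products with $(c,d)$ has $|\det|=|s|$ whenever $|b|=1$.

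For $|s|=2$ this cannot be avoided. The only admissible decomposition of $(2m+1,2)$ is $\{(m,1),(m+1,1)\}$. For instance, $(r,s)=(1,2)$ with $(a,b)=(1,1)$, $(c,d)=(0,1)$ leads you to $(2,1)\cdot(0,1)$, which is the case you are proving, up to symmetry. So $|\det|=2$ needs its own base case: resolve both crossings of $(1,0)\cup(1,2)$ directly (the two mixed states give trivial circles). For $|s|\ge 3$, you must take $(a,b)$ to be the Farey parent with the larger $|b|$, so that $|d\pm 1|<|s|$.

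Second, the products you generate are not of the shape covered by your hypothesis ``$(1,0)\cdot(r',s')$ with $(r',s')$ primitive and $|s'|<|s|$''.
\begin{itemize}
\item The left factor $(1\pm a,\pm b)_T$ can be non-primitive. For $(r,s)=(1,3)$ with the good choice $(a,b)=(1,2)$, $(c,d)=(0,1)$, you get $(2,2)_T\cdot(0,1)$.
\item The primitive factor $(c,d)$ sits on the right, while $SL_2(\mathbb{Z})$ acts by order-preserving automorphisms, so you cannot move it to $(1,0)$ on the left.
\item Your non-primitive step has the same defect: $[(1,0)\cdot p]\cdot T_{n-1}(p)$ produces $((1,0)\pm p)_T\cdot T_{n-1}(p)$, where both factors may be non-primitive. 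So ``primitive case first, then non-primitive'' is not a valid order.
\end{itemize}

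The repair has two parts. First, run a single induction on $|\det|$ asserting the formula for all pairs $(k,l)_T,(r,s)_T$ at that level. Handle primitive and non-primitive cases together at each level, the latter from lower levels via the Chebyshev recursion. Second, add the anti-automorphism of $\mathcal{S}_q(\Sigma_{1,0})$ induced by $(x,t)\mapsto(\rho(x),1-t)$, where $\rho$ is an orientation-reversing involution of the torus. It reverses products, sends $(k,l)_T\mapsto(k,-l)_T$, and preserves the formula. This lets you move a primitive right-hand factor to the left before normalizing it to $(1,0)$. With these additions the argument closes.
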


\subsection{$\rm{PGL}_2$ skein algebra of the torus and product-to-sum formula}

For the analogues of Chebyshev polynomials for $\rm{PGL}_2$ we will use polynomials $Y_n$ given by

\begin{equation}\label{Y}
Y_{0}(x) = 2, Y_{1}(x) = x - 1, Y_{2}(x) = x^{2} - 2x - 1, Y_{n}(x) = xY_{n - 1}(x) - xY_{n - 2}(x) + Y_{n - 3}(x).
\end{equation}

It can be checked by induction that the polynomial $Y_n$ satisfies

\[Y_{n}(x^{2} - 1) = T_{2n}(x).\]

Let $k,l \in \mathbb{Z}$. Let $g = \mathrm{gcd}(k,l)$, with the convention $\mathrm{gcd}(0,0) = 0$. Define

\[(k,l)_{Y} = Y_g \left( \frac{k}{g}, \frac{l}{g} \right)\]
for $g$ nonzero and $(0,0)_{Y} = 2\varnothing$.
Recall the element $w \in \Y(\Sigma_{1,0})$ from Equation (\ref{def of w}). Since each $Y_n$ is a monic polynomial of degree $n$ and since $[2] \neq0,$ it follows from Theorem \ref{Basis torus} that the set $\{(k,l)_{Y} : k,l \in \mathbb{Z}\}\cup\{w\}$ is a basis for $\Y(\Sigma_{1,0})$. We will show the product-to-sum formulas governing the multiplication in this basis are the following:

\begin{theorem}\label{torus presentation}
	For any $k,l,r,s \in \mathbb{Z}$,
	\[\begin{split}
	(k,l)_{Y} \cdot (r,s)_{Y} & = q^{2d}(k + r, l + s)_{Y} + q^{-2d}(k - r, l - s)_{Y} + \alpha w,\\
	(k,l)_{Y} \cdot w & = w \cdot (k,l)_{Y} = (-1)^{g_{1}}2w,\\
	w^{2} & = 2[2]^{2}w,\\
	\end{split}\]
	where
	\[\begin{split}
		d & = - \begin{vmatrix} k & l \\ r & s \end{vmatrix},\\
		\alpha & = \frac{(-1)^{g_{1} + g_{2}}2 - (-1)^{g_{3}}q^{2d} - (-1)^{g_{4}}q^{-2d}}{[2]^{2}},\\
		g_{1} & = \mathrm{gcd}(k,l),\\
		g_{2} & = \mathrm{gcd}(r,s),\\
		g_{3} & = \mathrm{gcd}(k + r, l + s),\\
		g_{4} & = \mathrm{gcd}(k - r, l - s).\\
	\end{split}\]
\end{theorem}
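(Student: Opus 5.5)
The plan is to work through the map $JW$ together with the one-dimensional kernel $\ker(JW)=\mathrm{span}(w)$ from Corollary \ref{JW not injective}. The first equation then holds modulo $w$ by Frohman--Gelca, and the leftover coefficient $\alpha$ is pinned down by a character of $\Y(\Sigma_{1,0})$ built from $w$.

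\textbf{Step 1: the product formula modulo $w$.} For a primitive curve $(a,b)$ we have $JW((a,b))=(a,b)^2-\varnothing$. Since $JW$ is an algebra map and $Y_g(x^2-1)=T_{2g}(x)$, this gives
\[JW((k,l)_Y)=T_{2g}\bigl((k/g,l/g)\bigr)=(2k,2l)_T.\]
This also holds when $(k,l)=(0,0)$, where both sides equal $2\varnothing$. The Frohman--Gelca formula gives
\[(2k,2l)_T(2r,2s)_T=q^{-2\det}(2k+2r,2l+2s)_T+q^{2\det}(2k-2r,2l-2s)_T,\]
where $\det=\begin{vmatrix} k & l \\ r & s \end{vmatrix}$. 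Since $d=-\det$, these coefficients are $q^{2d}$ and $q^{-2d}$. Hence $(k,l)_Y(r,s)_Y-q^{2d}(k+r,l+s)_Y-q^{-2d}(k-r,l-s)_Y$ lies in $\ker JW=\mathrm{span}(w)$, so it equals $\alpha w$ for some scalar $\alpha$.

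\textbf{Step 2: how $w$ multiplies.}
\begin{itemize}
\item Because $\ker JW$ is a two-sided ideal, both $x w$ and $w x$ lie in $\mathrm{span}(w)$ for every $x\in\Y(\Sigma_{1,0})$. So there are scalars $\lambda(x),\lambda'(x)$ with $xw=\lambda(x)w$ and $wx=\lambda'(x)w$.
\item Both $\lambda$ and $\lambda'$ are algebra characters, since $xyw=\lambda(x)\lambda(y)w$.
\item Equivariance under the mapping class group: $JW$ commutes with diffeomorphisms, so $\ker JW$ is preserved by $\mathrm{SL}_2(\mathbb{Z})$. Thus $\phi(w)=c\,w$ for each $\phi$. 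Writing $\phi(K)$ as $K$ plus multicurves via I-H moves and comparing $K$-coefficients in the basis $\beta$ gives $c=1$.
\item Consequently $\lambda((a,b))=\lambda((1,0))$ for every primitive $(a,b)$, and likewise for $\lambda'$.
\end{itemize}

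\textbf{Step 3: the main computation.} The real obstacle is computing $\lambda((1,0))$ and $\lambda(K)$ directly. The plan is to draw $(1,0)$ over $K$, which meets $K$ in few points, and expand the crossings with the crossing relation. Then reduce bigons and trigons using (\ref{ngon formulae}) and rewrite any rank-two $4$-vertex webs via the I-H relation and the monogon argument of Lemma \ref{span torus}. Finally read off the coefficient of $K$ in $(1,0)\cdot w$; this coefficient equals $[2]\lambda((1,0))$. Doing the same with $(1,0)$ placed under $K$ gives $\lambda'$.

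The expected outcome is $\lambda((1,0))=\lambda'((1,0))=-1$. With $x$ satisfying $x^2-1=-1$, i.e. $x=0$, this yields
\[\lambda((k,l)_Y)=Y_g(-1)=T_{2g}(0)=2(-1)^g,\]
which is the second relation. In the same way, computing $K\cdot w$ via its $K$-coefficient gives $\lambda(K)$, and then
\[\lambda(w)=[2]\lambda(K)+3+[3].\]
The target value is $\lambda(w)=2[2]^2$, which is the third relation $w^2=2[2]^2w$. As a fallback for evaluating these coefficients, one can apply $\hat P$ and compare coefficients of $A$: multicurves contribute nothing in degree one, while $K$ has a nonzero linear term.

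\textbf{Step 4: solving for $\alpha$.} Apply the character $\lambda$ to the identity from Step 1. Using $\lambda(w)=2[2]^2$ and $\lambda((k,l)_Y)=2(-1)^{g_1}$, we get
\[4(-1)^{g_1+g_2}=2q^{2d}(-1)^{g_3}+2q^{-2d}(-1)^{g_4}+2[2]^2\alpha.\]
Solving gives exactly the stated $\alpha$.
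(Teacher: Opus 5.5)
Your proposal is correct, and most of it follows the paper. Steps 1, 2, the curve computation in Step 3, and Step 4 correspond to Lemmas \ref{product of threaded curves}, \ref{h_*(w)=w}, \ref{w*(r,s)}, and the paper's extraction of $\alpha$ by multiplying by $w$. Recasting multiplication by $w$ as characters $\lambda,\lambda'$ turns the paper's induction on the recursion for $Y_n$ into the evaluation $Y_g(-1)=T_{2g}(0)=2(-1)^g$; this is the same content, packaged more cleanly.

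The genuine difference is the relation $w^2=2[2]^2w$. The paper works on $\Sigma_{1,1}$:
\begin{itemize}
\item it lifts $w$ there and computes $JW(K)$;
\item it squares $JW(w)=[2]x+[2]^2\varnothing$ in $\Sq(\Sigma_{1,1})$;
\item it pulls back using injectivity of $JW$ on the punctured torus, which rests on Chbili's results for the pair of pants;
\item it then fills the puncture.
\end{itemize}
Your route never leaves the closed torus. You only need the scalar $\lambda(w)$, so neither the punctured torus nor Chbili's injectivity is needed for this theorem. The paper's route has a side benefit: it establishes the formula for $JW(w)$ on $\Sigma_{1,1}$, which is reused in Section \ref{1-punctured torus section}.

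Your proposed way of finding $\lambda(K)$, via the $K$-coefficient of $K\cdot w$, forces you to reduce $K\cdot K$. That is a substantial computation with higher-vertex rank-two webs, and it is unnecessary. Since $\lambda$ is a character, apply it to the one-crossing expansion
\[(1,0)\cdot(0,1)=q^{2}D_1+(q^{-2}-1)D_2+[2]K\]
from the proof of Lemma \ref{w*(r,s)}, where $D_1,D_2$ are the two primitive diagonal curves and the trivalent resolution is literally $K$. Because $\lambda=-1$ on all primitive curves, this gives $1=-q^{2}-q^{-2}+1+[2]\lambda(K)$. Hence $[2]\lambda(K)=q^{2}+q^{-2}$, and
\[\lambda(w)=q^{2}+q^{-2}+3+[3]=2[2]^{2}.\]
With this shortcut, the whole theorem rests on the single diagrammatic computation $\lambda((1,0))=-1$ and its mirror for $\lambda'$.
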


\begin{proof}

The proof follows from Lemma \ref{w^2}, Lemma \ref{w*(r,s)_Y}, and Lemma \ref{product of threaded curves}.
\end{proof}

\begin{lemma}\label{w^2}
The identity $w^2=2[2]^2w$ holds in $\Y(\Sigma_{1,0}).$
\end{lemma}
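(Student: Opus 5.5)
Since $JW$ is an algebra homomorphism and $JW(w)=0$ by Corollary \ref{JW not injective}, we get $JW(w^2)=JW(w)^2=0$. That corollary also identifies $\ker(JW)=\mathrm{span}(w)$. Hence there is a scalar $c\in\mathbb{C}$ with
\[w^2=c\,w .\]
The whole problem is therefore to show that $c=2[2]^2$. This is the same qualitative argument that gives $(k,l)_Y\cdot w\in\mathrm{span}(w)$, and I would set it up in a way that can be reused there.

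To find $c$, I will read off the coefficient of $K$ in the basis $\beta$ of Theorem \ref{Basis torus}. The coefficient of $K$ in $w$ is $[2]\neq 0$, so $c$ equals the $K$-coefficient of $w^2$ divided by $[2]$. This coefficient can be detected by a linear functional. By Proposition \ref{invariant}, $\hat P$ is linear. It sends every multicurve to a constant in $\mathbb{C}$, while $\hat P_K(A)$ has degree exactly one in $A$. So the $K$-coefficient of any $x$ equals $[A^1]\hat P_x$ divided by $[A^1]\hat P_K$. Note that $\hat P$ is not multiplicative, so we cannot evaluate $\hat P$ on $w$ and square. We must actually expand the products.

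The expansion goes as follows. Write
\[w=[2]K-m+[3]\varnothing, \qquad m=(0,1)+(1,0)+(1,1),\]
so that
\[w^2=[2]^2K^2-[2](Km+mK)+2[2][3]K+m^2-2[3]m+[3]^2\varnothing .\]
The terms $[3]^2\varnothing$ and $-2[3]m$ contribute nothing to the $K$-coefficient, and $2[2][3]K$ contributes $2[2][3]$. I would treat the remaining products in increasing order of difficulty.

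\begin{enumerate}[(a)]
\item Products of two simple closed curves $(a,b)(c,d)$ with $|ad-bc|=1$. Each has one crossing. The crossing relation yields two multicurves plus $[2]$ times an $I$-tangle resolution. That resolution is a connected rank-two graph with two vertices, so it equals $K$ plus multicurves, via I-H moves exactly as in Lemma \ref{span torus}. The $K$-coefficient can be tracked through the I-H relation, or obtained directly from $[A^1]\hat P$ of the resolved graph. The squares $(a,b)^2$ contribute no $K$.
\item The products $K\cdot(a,b)$ and $(a,b)\cdot K$. Choose the curve so that it meets $K$ in as few points as possible, typically $1$ or $2$ crossings. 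Resolve these crossings. Each resulting crossingless web has rank two, so it reduces to $K$ plus multicurves after removing bigons and trigons with Equation \eqref{ngon formulae}, and monogons by the monogon relation. Its $K$-coefficient is again read off via $\hat P$.
\item The product $K^2$. This is the main obstacle, since two hexagonal webs overlay with several crossings. First I would isotope the two copies of $K$ to minimize intersections; two copies of the $\theta$-graph on the torus can be arranged to meet in $2$ points. Then I resolve both crossings, getting $9$ terms. I simplify each crossingless web with the bigon and trigon formulas \eqref{ngon formulae} and the I-H relation, then compute $[A^1]\hat P$ of each term using the subgraph expansion \eqref{hat p}. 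Only spanning subgraphs of rank two matter, which keeps the enumeration manageable.
\end{enumerate}

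Summing all contributions should give a $K$-coefficient of $2[2]^3$ for $w^2$, and hence $c=2[2]^2$.

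As a sanity check, I would apply $\hat P$ at the specialization where the multicurve values are easy, to confirm that the constant terms of $w^2$ match $c$ times those of $w$. Since the product $K^2$ is where errors are most likely, I would cross-check its $K$-coefficient by computing it two ways: once with the crossings resolved in the order above, and once after first applying an I-H move to one copy of $K$.
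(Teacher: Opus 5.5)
Your reduction is correct and not circular. $JW$ is an algebra map, and both $JW(w)=0$ and $\ker(JW)=\mathrm{span}(w)$ come from Corollary~\ref{JW not injective}, whose proof does not use this lemma. So $w^2=cw$, and $c$ is the $K$-coefficient of $w^2$ divided by $[2]$, which $[A^1]\hat P$ detects.

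\textbf{The gap.} The argument above only establishes $w^2\in\mathrm{span}(w)$. The content of the lemma is the value $c=2[2]^2$, and the proposal never derives it: ``summing all contributions should give $2[2]^3$'' just restates the claim. Everything except $K\cdot K$ is in fact easy:
\begin{itemize}
\item for $(a,b)\in\{(0,1),(1,0),(1,1)\}$, each of $K\cdot(a,b)$ and $(a,b)\cdot K$ has $K$-coefficient $1$;
\item each of the six cross terms of $m^2$ has $K$-coefficient $[2]$.
\end{itemize}
These contributions cancel in $-[2](Km+mK)+m^2$. The $K$-coefficient of $w^2$ is therefore $[2]^2k+2[2][3]$, where $k$ is the $K$-coefficient of $K\cdot K$. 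So the lemma is equivalent to $k=2/[2]$. That is exactly the nine-term computation of your step (c), including an $8$-vertex web, which you postpone. Until that number is actually computed, nothing in the proposal determines $c$.

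\textbf{How to close it on the closed torus.} You can avoid $K\cdot K$ entirely by expanding only one factor:
\[w^2=w\cdot w=[2]\,wK-w\,m+[3]w.\]
Your step (b) computation, combined with $w\cdot(a,b)\in\ker(JW)=\mathrm{span}(w)$, gives $w\cdot(a,b)=-w$ for every simple curve. This is Lemma~\ref{w*(r,s)}, whose proof is independent of the present lemma. Hence $wm=-3w$.

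For $wK$, use the one-crossing identity $(1,0)\cdot(0,1)=q^{2}c_1+(q^{-2}-1)c_2+[2]K$ from the proof of that lemma, where $c_1,c_2$ are simple closed curves. Multiply it on the left by $w$. The left side becomes $(w\cdot(1,0))\cdot(0,1)=w$. The right side becomes $-(q^2+q^{-2}-1)w+[2]wK$. Hence $[2]wK=(q^2+q^{-2})w$, and
\[w^2=(q^2+q^{-2}+3+[3])w=2[2]^2w.\]

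\textbf{Comparison with the paper.} The paper avoids web manipulations on the closed torus altogether. It lifts to $\Sigma_{1,1}$, where $JW$ is injective (Corollary~\ref{JW injective on punctured torus}) and $JW(w)=[2]x+[2]^2\varnothing$ is a polynomial in the boundary curve. It squares there, takes the unique preimage, and pushes down via $x\mapsto[3]\varnothing$. That route trades diagrammatics for the injectivity input coming from Chbili's result on the pair of pants. Your route, once completed as above, stays on the closed torus and needs only the basis theorem, $\hat P$, and single-crossing resolutions.
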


\begin{proof}
The fact that $w^2=2[2]^2w$ can be verified by direct computation but we find it easier to lift the computation to a corresponding computation on the once-punctured torus where we can use the injectivity of the map $JW: \Y(\Sigma_{1,1}) \rightarrow \Sq(\Sigma_{1,1})$ from Corollary \ref{JW injective on punctured torus}.

By abuse of notation, let $K, (k,l) \in \Y(\Sigma_{1,1})$ denote elements represented by the same underlying diagrams as the elements $K,(k,l) \in \Y(\Sigma_{1,0})$, but existing in the punctured torus. See Section \ref{1-punctured torus section} for details. The canonical embedding
\[i : \Sigma_{1,1} \to \Sigma_{1,0}\]
induces an epimorphism of skein algebras
\[i_{\ast}^{Y} : \Y(\Sigma_{1,1}) \to \Y(\Sigma_{1,0}).\]

We can compute $w^{2}$ in $\Y(\Sigma_{1,0})$ by squaring the image of its corresponding element in $\Y(\Sigma_{1,1})$ under $JW$, then finding the unique preimage. Define $x$ to be the boundary-parallel curve. We can compute that in $\Y(\Sigma_{1,1})$,
	\[\begin{split}\label{jwk}
		JW(K) & = \jwktilde\\
			      & = \jwktildezero + \frac{1}{[2]} \jwktildeone + \frac{1}{[2]} \jwktildetwo + \frac{1}{[2]^{2}} \jwktildethree\\
			      & \quad + \frac{1}{[2]} \jwktildefour \frac{1}{[2]^{2}} \jwktildefive + \frac{1}{[2]^{2}} \jwktildesix + \frac{1}{[2]^{3}} \jwktildeseven.\\
			      & = x + \frac{1}{[2]} (1,1)^{2} + \frac{1}{[2]} (0,1)^{2} + \frac{1}{[2]^{2}}(-[2]\varnothing)\\
			      & \quad + \frac{1}{[2]} (1,0)^{2} + \frac{1}{[2]^{2}} (-[2]\varnothing) + \frac{1}{[2]^{2}} (-[2]\varnothing) + \frac{1}{[2]^{3}}(-[2]\varnothing)^{2}\\
			      & = x + \frac{1}{[2]}((0,1)^{2} + (1,0)^{2}  + (1,1)^{2} - 2\varnothing),\\
	\end{split}\]
	so
	\[\begin{split}
		JW(w^{2}) & = (JW(w))^{2}\\
				  & = (JW([2]K - (0,1) - (1,0) - (1,1) + [3]\varnothing))^{2}\\
				  & = ([2]JW(K) - JW((0,1)) - JW((1,0)) - JW((1,1)) + [3]\varnothing)^{2}\\
				  & = ([2]x + (0,1)^{2} + (1,0)^{2} + (1,1)^{2} - 2\varnothing\\
				  & \quad - ((1,0)^{2} - \varnothing) - ((0,1)^{2} - \varnothing) - ((1,1)^{2} - \varnothing) + [3]\varnothing)^{2}\\
				  & = ([2]x + (1 + [3])\varnothing)^{2}\\
				  & = ([2]x + [2]^{2}\varnothing)^{2}\\
				  & = [2]^{2}x^{2} + 2[2]^{3}x + [2]^{4}\varnothing,\\
	\end{split}\]
	which has unique preimage in $\Y(\Sigma_{1,1})$ of
	\[\begin{split}
		w^{2} & = JW^{-1}(JW(w)^{2})\\
			      & = [2]^{2}(x + \varnothing) + 2[2]^{2}(w - [2]^{2} \varnothing) + [2]^{4} \varnothing.\\
	\end{split}\]
	Now, the epimorphism from the punctured torus to the unpunctured torus allows for the boundary-parallel curve to be resolved, mapping to $[3]\varnothing$. Thus, in $\Y(\Sigma_{1,0})$ we have
	\[\begin{split}
		w^{2} & = i_{\ast}^{Y}(w^{2})\\
		      & = [2]^{2}([3] + 1)\varnothing + 2[2]^{2}(w - [2]^{2} \varnothing) + [2]^{4}\varnothing\\
		      & = 2[2]^{2}w.\\
	\end{split}\]
\end{proof}

To make the other computations easier, we make the observation that $w=h_*(w)$ if $h$ is an orientation-preserving diffeomorphism of $\Sigma_{1,0}.$

\begin{lemma}\label{h_*(w)=w}
If $h: \Sigma_{1,0} \rightarrow \Sigma_{1,0}$ is an orientation-preserving diffeomorphism of $\Sigma_{1,0}$ then we have $h_*(w)=w.$
\end{lemma}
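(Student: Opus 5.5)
The plan is to reduce to generators of the mapping class group and use the structure of $\ker(JW)$. The mapping class group of $\Sigma_{1,0}$ is $\mathrm{SL}_2(\mathbb{Z})$, generated by the Dehn twists $T_a$ and $T_b$ about the meridian and longitude. Orientation-preserving diffeomorphisms isotopic to each other induce the same map on skeins, so it suffices to show $h_*(w)=w$ for $h=T_a^{\pm1}$ and $h=T_b^{\pm1}$. Since inverses follow from $h_*(w)=w \Rightarrow w = h^{-1}_*(w)$, it is enough to treat $T_a$ and $T_b$.

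The cleanest route avoids diagram manipulation with $K$ entirely.

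\textbf{Step 1: $JW$ commutes with diffeomorphisms.} The map $JW$ is defined locally on edges and vertices, so $JW\circ h_*^Y = h_*\circ JW$ by Proposition \ref{properties}(ii).

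\textbf{Step 2: $h_*(w)$ lies in the kernel.} By Step 1, $JW(h_*(w)) = h_*(JW(w)) = 0$. Corollary \ref{JW not injective} says $\ker JW=\mathrm{span}(w)$, so $h_*(w)=c_h w$ for some scalar $c_h$. Since $h_*$ is invertible, $c_h\neq 0$.

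\textbf{Step 3: determine the scalar.} Write $h_*(w)$ in the basis $\beta$ and compare the coefficient of $K$:
\[
h_*(w)=[2]\,h_*(K)-h_*((0,1))-h_*((1,0))-h_*((1,1))+[3]\varnothing .
\]
Each $h_*((k,l))$ is again a single torus curve, so it contributes nothing to $K$ or $\varnothing$ in $\beta$. The graph $h_*(K)$ is a connected rank-two crossingless graph with two vertices. By the argument in Lemma \ref{span torus} (the I-H moves of \cite{Dew73,Neg94}), it equals $K$ plus a combination of multicurves, with $K$-coefficient exactly $1$. Hence the $K$-coefficient of $h_*(w)$ is $[2]$, the same as for $w$, which forces $c_h=1$.

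Alternatively, compare the $\varnothing$-coefficients, but this requires tracking the multicurve terms, so the $K$-coefficient comparison is preferable.

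\textbf{Main obstacle.} The delicate point is the claim in Step 3 that $h_*(K)=K+(\text{multicurves})$ with coefficient exactly $1$, with no hidden rescaling of $K$. In Lemma \ref{span torus}, one I-H move replaces a graph by the other graph plus $\pm\frac{1}{[2]}$ times two-vertex-free terms. Those extra terms are multicurves, since they have no vertices, and the I-H-transformed graph keeps coefficient $1$. Iterating therefore preserves coefficient $1$ on the two-vertex rank-two graph.

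If this bookkeeping felt shaky, I would instead use the invariant $\hat P$ of Proposition \ref{invariant}. The coefficient of $A$ in $\hat P$ vanishes on multicurves, and for a two-vertex rank-two graph it is a quantity determined only by the abstract graph together with the rank function. Applying $h$ preserves that data, so $\hat P_{h_*(K)}$ and $\hat P_K$ have equal $A$-coefficients. This also gives $c_h=1$.
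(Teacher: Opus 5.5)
Your Steps 1--3 are the paper's argument: $JW$ commutes with $h_*$, so $h_*(w)=cw$ because $\ker(JW)=\mathrm{span}(w)$, and comparing $K$-coefficients, using that $h_*(K)$ is a two-vertex rank-two graph I-H equivalent to $K$, forces $c=1$. The reduction to Dehn twists is never used and can be dropped; your $\hat P$ variant is also valid, since $\hat P\circ h_*=\hat P$ and $\hat P(w)$ has nonzero $A$-coefficient.
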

\begin{proof}
The map $JW$ commutes with $h_*$, so
\[JW(h_{\ast}(w)) = h_{\ast}(JW(w)) = h_{\ast}(0) = 0.\]
Thus, $h_{\ast}(w) \in \mathrm{ker}(JW)$. By Corollary \ref{JW not injective}, $h_*(w) \in \text{span}(w)$ and there exists $c \in \mathbb{C}$ such that
\begin{equation}\label{h(w) 1}
 h_*(w)=cw.   
\end{equation}
On the other hand, we can directly compute that

\begin{align*}
h_*(w)&=h_*\left([2]K-(1,0)-(0,1)-(1,1)+[3]\varnothing\right)\\
&= [2]h_*(K)-h_*((1,0))-h_*((0,1))-h_*((1,1))+[3]h_*(\varnothing).
\end{align*}

Since $h_*$ preserves the number of vertices and rank, we have that $h_*(K)$ is a two-vertex rank-two graph. Therefore, it is I-H equivalent to $K$. Thus, by applying I-H skein relations to $h_*(w)$ we have that

\begin{equation}\label{h(w) 2}
 h_*(w)=[2]K + (\text{linear combination of multicurves}).   
\end{equation}

Since we also have that

\begin{equation}\label{h(w) 3}
 w=[2]K+(\text{linear combination of multicurves}),   
\end{equation}

and since $K$ is a basis element not spanned by multicurves, equations (\ref{h(w) 1}), (\ref{h(w) 2}), and (\ref{h(w) 3}) imply that $c=1.$ We conclude that $h_*(w)=w.$
\end{proof}

\begin{lemma}\label{w*(r,s)}
The product of a torus knot and $w$ is given by
\begin{equation*}
(r,s) \cdot w=-w=w \cdot (r,s).
\end{equation*}
\end{lemma}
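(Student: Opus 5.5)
The plan is to reduce to a single curve using Lemma \ref{h_*(w)=w}, then use $\ker(JW)=\mathrm{span}(w)$ to get $(r,s)\cdot w = cw$, and finally find $c$ by tracking the coefficient of $K$.

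\textbf{Reduction to one curve.} For coprime $(r,s)$ there is an orientation-preserving diffeomorphism $h$ of $\Sigma_{1,0}$, coming from $\mathrm{SL}_2(\mathbb{Z})$, with $h((r,s))=(1,0)$. Since $h_*$ is an algebra automorphism and $h_*(w)=w$ by Lemma \ref{h_*(w)=w}, we get
\[h_*((r,s)\cdot w)=(1,0)\cdot w, \qquad h_*(w\cdot(r,s))=w\cdot(1,0).\]
So it suffices to treat $(1,0)$. In fact we may pick whichever slope is most convenient relative to the fixed diagram of $K$. I would choose the slope of a curve that meets $K$ transversally in exactly one point, namely a curve parallel to the horizontal cycle of $K$, pushed off it so that it crosses only the slanted edge.

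\textbf{$(1,0)\cdot w$ is a multiple of $w$.} Because $JW$ is an algebra homomorphism and $JW(w)=0$,
\[JW((1,0)\cdot w)=JW((1,0))\cdot JW(w)=0 .\]
By Corollary \ref{JW not injective}, $(1,0)\cdot w=c\,w$ for some scalar $c$; the same argument gives $w\cdot(1,0)=c'\,w$. Since $w=[2]K+(\text{multicurves})$ and $\beta$ is a basis (Theorem \ref{Basis torus}), $c$ is determined by the $K$-coefficient:
\[c=\frac{1}{[2]}\cdot\bigl(\text{coefficient of } K \text{ in } (1,0)\cdot w\bigr).\]
The product of $(1,0)$ with a multicurve is again a combination of multicurves, so only $[2]\,(1,0)\cdot K$ contributes. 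Hence $c$ equals the $K$-coefficient of $(1,0)\cdot K$.

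\textbf{Computing the $K$-coefficient of $(1,0)\cdot K$.} This is the main step. With the chosen curve there is a single crossing, and the crossing relation turns the product into three crossingless terms.
\begin{itemize}
\item The $q^{2}$ and $(q^{-2}-1)$ smoothings each produce a connected rank-two graph with $2$ vertices. By the I-H argument of Lemma \ref{span torus}, each equals $K$ plus multicurves, contributing $q^2$ and $q^{-2}-1$ to the $K$-coefficient.
\item The $[2]$ term (the I-tangle) produces a rank-two graph with $4$ vertices. Running the reduction of Lemma \ref{span torus}, it becomes a $K_1$ (which is $0$ by the monogon relation) plus two-vertex terms whose $K$-coefficient must be recorded.
\end{itemize}
The obstacle is this bookkeeping. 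Each I-H move shifts terms by multiples of $1/[2]$, and possibly creates bigons and trigons, which are handled by \eqref{ngon formulae}. I expect the total to collapse to $c=-1$, independent of $q$; that independence is itself a good consistency check.

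\textbf{Cross-check via the punctured torus.} As a safeguard I would redo the computation in $\Y(\Sigma_{1,1})$, where $JW$ is injective (Corollary \ref{JW injective on punctured torus}), and then push forward by $i_*^Y$ as in Lemma \ref{w^2}. The required identity there is
\[JW\bigl((1,0)\cdot \tilde w\bigr)=\bigl((1,0)^2-\varnothing\bigr)\bigl([2]x+[2]^2\varnothing\bigr).\]

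\textbf{Right multiplication.} For $w\cdot(1,0)$ the same one-crossing computation applies with the crossing reversed. Alternatively, the mirror/orientation-reversing symmetry combined with the diffeomorphism $h$ exchanges left and right products and sends the $K$-coefficient to the same value. Either way $c'=c=-1$, which gives $(r,s)\cdot w=-w=w\cdot(r,s)$.
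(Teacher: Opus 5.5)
Your reduction to one slope via Lemma~\ref{h_*(w)=w}, the observation $(1,0)\cdot w=c\,w$, and reading off $c$ from the $K$-coefficient all match the paper. The gap is the sentence claiming that the product of $(1,0)$ with a multicurve is again a combination of multicurves, so that only $[2]\,(1,0)\cdot K$ contributes. This is false in $\Y(\Sigma_{1,0})$. The crossing relation contains the term $[2]\cdot(\text{I-tangle})$, so two simple closed curves meeting once multiply to $q^{2}(\cdot)+(q^{-2}-1)(\cdot)+[2]\,\theta$. Here $\theta$ is a two-vertex rank-two web, i.e.\ $K$ plus multicurves.

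Your curve $\gamma$, the push-off of the horizontal cycle of $K$, is parallel to one of the three curves occurring in $w$. It meets each of the other two exactly once, so those two terms of $w$ contribute $-[2]-[2]$ to the $K$-coefficient of $\gamma\cdot w$. The omission changes the answer. The $K$-coefficient of $\gamma\cdot K$ is $q^{2}+(q^{-2}-1)+(3-[3])=1$: the two smoothings are rank-two thetas, and the $[2]$-times-four-vertex term contributes $3-[3]$ (e.g.\ by two I-H moves and the trigon formula). So your recipe ``$c$ equals the $K$-coefficient of $(1,0)\cdot K$'' gives $c=1$. The correct tally is $[2]\cdot 1-[2]-[2]=-[2]$, hence $c=-1$, which is how the paper's count goes. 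Your expectation that the bookkeeping ``collapses to $-1$'' could not have come out of your setup.

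Your punctured-torus cross-check could repair this, but as written the displayed identity is just multiplicativity of $JW$ and carries no information. The content lies in identifying the preimage. Take $\gamma=(k,l)$ and use Lemma~\ref{JW of generators}, with the paper's abuse of notation for $x$. Then both sides of
\[\gamma\cdot\tilde w=[2]\,b_{k,l}-x-\tilde w+([2]^{2}-2)\,\gamma+[3]\varnothing\]
have the same image under the injective map $JW$ on $\Y(\Sigma_{1,1})$. Apply $i_*$ with $i_*(x)=[3]\varnothing$ and $i_*(b_{k,l})=\frac{1-[3]}{[2]}\gamma$. This gives $\gamma\cdot w=([2]^{2}-1-[3])\gamma-w=-w$. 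Since $JW(\tilde w)=[2]x+[2]^{2}\varnothing$ is central, the same holds for $w\cdot\gamma$. Carried out, this would be a valid alternative to the paper's diagrammatic count. The main line of your proposal, as stated, yields the wrong constant.
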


\begin{proof}
We wish to compute $w \cdot (r,s)$. Since $\mathrm{ker}(JW)$ is an ideal spanned by $w$, $w \cdot (r,s) = cw$ for some $c \in \mathbb{C}$. We claim $c$ does not depend on $(r,s)$.\\
Let $h : \Sigma_{1,0} \to \Sigma_{1,0}$ be an orientation-preserving diffeomorphism sending the curve of slope $(0,1)$ to the curve of slope $(r,s)$. Then we have that the induced map $h_*$ is an algebra homomorphism $h_*: \Y(\Sigma_{1,0})\rightarrow \Y(\Sigma_{1,0})$ and satisfies $h_*(w)=w$ by Lemma \ref{h_*(w)=w}. Consequently, we have
\begin{align*}
	h_{\ast}(cw) & = cw\\
		     & = w \cdot (r,s)\\
		     & = h_{\ast}(w) \cdot h_{\ast}((0,1))\\
		     & = h_{\ast}(w \cdot (0,1)).
\end{align*}
Since $h$ is a diffeomorphism, $h_{\ast}$ is actually an isomorphism, so $cw = w \cdot (0,1)$. Thus, $c$ does not depend on $(r,s)$, and we may compute the product of any curve with $w$ by diagrammatic computation of $w \cdot (0,1)$.

Since we know that $w \cdot (0,1)=cw=c([2]K-(1,0)-(0,1)-(1,1)+[3]\varnothing)$, we can compute the value of $c$ by computing the coefficient of $K$ when expressing $w \cdot(0,1)$ as a linear combination of $K$ and multicurves. The computation of the coefficient of $K$ can be achieved by summing the coefficients associated to diagrams that are rank-two webs with two vertices since these are all I-H equivalent to $K.$ We can compute
\begin{align*}
	K \cdot (0,1) & = \woikoi\\
		      & = q^{2} \woikoione + (q^{-2} - 1) \woikoitwo + [2] \woikoithree.
\end{align*}
The first two terms are rank-two two-vertex webs and thus contribute $q^{2}$ and $q^{-2} - 1$ to the $K$ coefficient of the product. The third term may be reduced as
\[[2] \woikoithree = [2] \woikoifour + \woikoixone - \woikoixtwo.\]
The second two terms on the right-hand side do not contribute to the coefficient of $K$ as they are not rank-two. The first term may be reduced as
\[[2] \woikoifour = [2] \woikoifive + \woikoitwo - \woikoixthree.\]
The second term of the right side contributes $1$ to the coefficient of $K$, the third term does not contribute as it is not rank-two, and the first term contributes $2-[3]$ after applying the following triangle reduction from Equation (\ref{ngon formulae}):
\[[2] \woikoifive = (2 - [3]) \hexagon.\]
Next, we can compute
\begin{align*}
	(1,0) \cdot (0,1) & = \woiiooi\\
			  & = q^{2} \woinii + (q^{-2} - 1) \woiii + [2] \hexagon,\\
	(1,1) \cdot (0,1) & = \woiiioi\\
			  & = q^{2} \woiio + (q^{-2} - 1) \woitwoo + [2] \woikoitwo,
\end{align*}
each of which only contributes to the coefficient of $K$ by the $[2]$ in the third term. The products $(0,1) \cdot (0,1)$ and $\varnothing \cdot (0,1)$ do not contribute. Summing the coefficients, we have that the coefficient of $K$ in $w \cdot (0,1)$ is
\[[2](q^{2} + q^{-2} - 1 + 1 + 2 - [3]) - [2] - [2] = -[2].\]
Then $c = -[2]/[2] = -1$. Therefore, $w \cdot (0,1) = -w$, so $w \cdot (r,s) = -w$ for all curves $(r,s)$. The computation of $(r,s) \cdot w=-w$ follows from symmetry by replacing $q$ with $q^{-1}.$
\end{proof}

Now recall the definitions of the polynomials $Y_n$ from Equation (\ref{Y}) and the elements $(r,s)_Y.$

\begin{lemma}\label{w*(r,s)_Y}
\begin{equation*}
(r,s)_{Y} \cdot w = (-1)^{\mathrm{gcd}(r,s)}2w= w \cdot (r,s)_{Y}.
\end{equation*}
\end{lemma}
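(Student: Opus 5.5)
Write $g=\mathrm{gcd}(r,s)$ and $x=(r/g,s/g)$, so that $(r,s)_Y=Y_g(x)$. Lemma \ref{w*(r,s)} applies to any primitive torus curve, so $x\cdot w=-w=w\cdot x$. By induction, $x^n\cdot w=(-1)^n w=w\cdot x^n$ for every $n\ge 0$. Extending linearly, for any polynomial $p$ we get
\[p(x)\cdot w = p(-1)\,w = w\cdot p(x).\]
The statement therefore reduces to the scalar identity $Y_g(-1)=(-1)^g\,2$. For $g=0$ the convention $(0,0)_Y=2\varnothing$ gives $2w=(-1)^0\,2w$ directly. For $g\ge1$ the product is $Y_g(-1)w$, so the same identity is all we need.

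To verify $Y_n(-1)=2(-1)^n$ I would use the identity $Y_n(x^2-1)=T_{2n}(x)$ recorded after Equation (\ref{Y}). Setting $x=0$ gives $Y_n(-1)=T_{2n}(0)$. The Chebyshev recursion at $0$ reads $T_m(0)=-T_{m-2}(0)$ with $T_0(0)=2$, so $T_{2n}(0)=2(-1)^n$.

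As a sanity check against the recursion (\ref{Y}) directly: $Y_0(-1)=2$, $Y_1(-1)=-2$, $Y_2(-1)=1+2-1=2$. The recursion at $x=-1$ reads $Y_n(-1)=-Y_{n-1}(-1)+Y_{n-2}(-1)+Y_{n-3}(-1)$. If $Y_m(-1)=2(-1)^m$ for the three previous indices, the right-hand side is $2(-1)^n(1+1-1)=2(-1)^n$, so induction closes the argument without invoking the Chebyshev identity.

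There is no real obstacle. The only point needing care is that the argument uses both sides of Lemma \ref{w*(r,s)}, which is what makes left and right multiplication agree, together with associativity, to pass from $x$ to $x^n$. The basis identity $Y_n(x^2-1)=T_{2n}(x)$ is only cited from the paper, which is why the direct induction above is also given.
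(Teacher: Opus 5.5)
Your proof is correct and is essentially the paper's argument. The paper inducts directly on $Y_n((r,s))\cdot w$ using the recursion (\ref{Y}), the relation $(r,s)\cdot w=-w$, and base cases $n=0,1,2$; that is your scalar induction for $Y_n(-1)=2(-1)^n$ carried out inside the algebra, and your reduction to $p(x)\cdot w=p(-1)\,w$ (with the alternative check via $Y_n(x^2-1)=T_{2n}(x)$ at $x=0$) is a cleaner packaging of the same computation.
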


\begin{proof}
We will prove this by induction. We can factor $\gcd(r,s)$ out of both $r,s$ so it suffices to prove the statement for $(nr,ns)$ in the case that $r,s$ are coprime.

We begin with the base cases.

\[\begin{split}
		(0r,0s)_{Y} \cdot w & = 2 \varnothing  \cdot w\\
				    & = 2w\\
				    & = (-1)^{0}2w,\\
		(r,s)_{Y} \cdot w & = Y_{1}(r,s) \cdot w\\
				  & = ((r,s) - \varnothing) \cdot w\\
				  & = (r,s) \cdot w - \varnothing \cdot w\\
				  & = -w - w\\
				  & = -2w\\
				  & = (-1)^{1}2w,\\
		(2r,2s)_{Y} \cdot w & = Y_{2}(r,s) \cdot w\\
				    & = ((r,s)^{2} - 2(r,s) - \varnothing) \cdot w\\
				    & = (r,s)^{2} \cdot w - 2 (r,s) \cdot w - \varnothing \cdot w\\
				    & = -(r,s) \cdot w + 2 w - w\\
				    & = w + 2w - w\\
				    & = 2w\\
				    & = (-1)^{2}2w.\\
	\end{split}\]

Proceeding by induction, let $n \ge 3$ such that $(kr,ks)_{Y} \cdot w = (-1)^{k}2w$ for all $0 \le k < n$. Then
	\[\begin{split}
		(nr,ns)_{Y} \cdot w & = Y_{n}(r,s) \cdot w\\
				    & = ((r,s) \cdot Y_{n - 1}(r,s) - (r,s) \cdot Y_{n - 2}(r,s) + Y_{n - 3}(r,s)) \cdot w\\
                    & = (r,s) \cdot Y_{n - 1}(r,s) \cdot w - (r,s) \cdot Y_{n - 2}(r,s) \cdot w + Y_{n - 3}(r,s) \cdot w\\
				    & = (r,s) \cdot (-1)^{n - 1}2w - (r,s) \cdot (-1)^{n - 2}2w + (-1)^{n - 3}2w\\
				    & = (-1)^{n}2w - (-1)^{n - 1}2w + (-1)^{n - 3}2w\\
				    & = (-1)^{n}2w.\\
	\end{split}\]
	The desired result follows by induction. Since $w$ commutes with any $(r,s)$, it commutes with $(nr,ns)_{Y}$, giving us the second equality.

\end{proof}

\begin{lemma}\label{product of threaded curves}
\begin{equation*}
(k,l)_{Y} \cdot (r,s)_{Y} = q^{-2 \begin{vmatrix} k & l \\ r & s \end{vmatrix}} (k + r, l + s)_{Y} + q^{2 \begin{vmatrix} k & l \\ r & s \end{vmatrix}} (k - r, l - s)_{Y} + \alpha w
\end{equation*}
\end{lemma}

\begin{proof}
The key idea is that the $JW$ map restricts to an injective linear map on the set $\mathrm{span}\{\beta \setminus \{K\}\}$ spanned by basis multicurves. Thus, we can use it, along with the Frohman-Gelca formula, to compute $(k,l)_{Y} \cdot (r,s)_Y$ up to the addition of an element $\alpha w \in \ker(JW).$ The main insight is that we can use Lemmas \ref{w^2} and \ref{w*(r,s)_Y} to compute the coefficient $\alpha.$ We have

\[\begin{split}
		JW((k,l)_{Y} \cdot (r,s)_{Y}) & = JW((k,l)_{Y}) \cdot JW((r,s)_{Y})\\
					      & = (2k,2l)_{T} \cdot (2r,2s)_{T}\\
					      & = q^{-\frac{1}{2} \begin{vmatrix} 2k & 2l\\ 2r & 2s \end{vmatrix}}(2k + 2r,2l + 2s)_{T} + q^{\frac{1}{2} \begin{vmatrix} 2k & 2l\\ 2r & 2s \end{vmatrix}}(2k - 2r,2l - 2s)_{T}\\
					      & = q^{-2 \begin{vmatrix} k & l \\ r & s \end{vmatrix}} (2(k + r),2(l + s))_{T} + q^{2 \begin{vmatrix} k & l \\ r & s \end{vmatrix}} (2(k - r),2(l - s))_{T}\\
					      & = q^{-2 \begin{vmatrix} k & l \\ r & s \end{vmatrix}} JW((k + r, l + s)_{Y}) + q^{2 \begin{vmatrix} k & l \\ r & s \end{vmatrix}} JW((k - r, l - s)_{Y}).\\
	\end{split}\]
	Thus,
	\[\begin{split}
		(k,l)_{Y} \cdot (r,s)_{Y} & \in q^{-2 \begin{vmatrix} k & l \\ r & s \end{vmatrix}} (k + r, l + s)_{Y} + q^{2 \begin{vmatrix} k & l \\ r & s \end{vmatrix}} (k - r, l - s)_{Y} + \mathrm{ker}(JW)\\
					  & = q^{-2 \begin{vmatrix} k & l \\ r & s \end{vmatrix}} (k + r, l + s)_{Y} + q^{2 \begin{vmatrix} k & l \\ r & s \end{vmatrix}} (k - r, l - s)_{Y} + \mathrm{span}(w).\\
	\end{split}\]
	Therefore, we can write
	\[(k,l)_{Y} \cdot (r,s)_{Y} = q^{-2 \begin{vmatrix} k & l \\ r & s \end{vmatrix}} (k + r, l + s)_{Y} + q^{2 \begin{vmatrix} k & l \\ r & s \end{vmatrix}} (k - r, l - s)_{Y} + \alpha w,\]
	for some $\alpha \in \mathbb{C}$. To compute such a coefficient, first multiply both sides of the equation by $w$:
    
	\[\begin{split}
		& w \cdot \left( (k,l)_{Y} \cdot (r,s)_{Y} \right) = w \cdot \left( q^{-2 \begin{vmatrix} k & l \\ r & s \end{vmatrix}} (k + r, l + s)_{Y} + q^{2 \begin{vmatrix} k & l \\ r & s \end{vmatrix}} (k - r, l - s)_{Y} + \alpha w \right).\\
    \end{split}\]

    Using Lemma \ref{w*(r,s)_Y} twice on the left-hand side, and using Lemmas \ref{w*(r,s)_Y} and \ref{w^2} on the right-hand side, we obtain

    \[\begin{split}
		 & (-1)^{g_{1} + g_{2}}4w = q^{-2 \begin{vmatrix} k & l \\ r & s \end{vmatrix}} (-1)^{g_{3}}2w + q^{2 \begin{vmatrix} k & l \\ r & s \end{vmatrix}} (-1)^{g_{4}}2w + 2[2]^{2} \alpha w.\\
    \end{split}\]

    We then equate coefficients of $w$ to solve for $\alpha$ and obtain

    \[\begin{split}
		& \alpha = \frac{(-1)^{g_{1} + g_{2}}2 - q^{-2 \begin{vmatrix} k & l \\ r & s \end{vmatrix}} (-1)^{g_{3}} - q^{2 \begin{vmatrix} k & l \\ r & s \end{vmatrix}} (-1)^{g_{4}}}{[2]^{2}},\\
	\end{split}\]
	as desired.
\end{proof}

\begin{corollary}\label{zero divisor}
The algebra $\Y(\Sigma_{1,0})$ is not a domain and thus there does not exist an injective algebra homomorphism $\Y(\Sigma_{1,0}) \rightarrow \Sq(\Sigma_{1,0}).$
\end{corollary}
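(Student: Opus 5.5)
We exhibit an explicit pair of nonzero elements of $\Y(\Sigma_{1,0})$ whose product vanishes, using the relations already established for $w$. By Lemma \ref{w*(r,s)}, for any torus knot $(r,s)$ with $\gcd(r,s)=1$ we have $(r,s)\cdot w=-w$. Hence
\[
\bigl((r,s)+\varnothing\bigr)\cdot w = -w + w = 0 .
\]
For concreteness we take $(r,s)=(0,1)$.

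It remains to check that both factors are nonzero. The element $w$ is nonzero, as observed in the proof of Corollary \ref{JW not injective}: its expansion in the basis $\beta$ of Theorem \ref{Basis torus} has coefficient $[2]\neq 0$ on $K$. The element $(0,1)+\varnothing$ is a linear combination of the two distinct basis elements $(0,1)$ and $\varnothing$ of $\beta$, each with coefficient $1$, so it is nonzero by linear independence (Corollary \ref{LI beta}). Alternatively, Lemma \ref{w^2} gives $w\cdot\bigl(w-2[2]^2\varnothing\bigr)=0$. Here $w-2[2]^2\varnothing\neq 0$ because its $K$-coefficient is $[2]\neq 0$, so this also works. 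Either way, $\Y(\Sigma_{1,0})$ has zero divisors and is not a domain.

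For the second assertion, suppose $\phi:\Y(\Sigma_{1,0})\to\Sq(\Sigma_{1,0})$ is an injective algebra homomorphism. Then
\[
\phi\bigl((0,1)+\varnothing\bigr)\,\phi(w)=\phi(0)=0,
\]
and both factors are nonzero by injectivity. This would make them zero divisors in $\Sq(\Sigma_{1,0})$, contradicting part (iii) of Proposition \ref{properties}.

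There is no real obstacle here. The only point needing care is the nonvanishing of the factors, and that follows directly from the basis theorem.
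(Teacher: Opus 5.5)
Your proof is correct and is essentially the paper's argument. The paper deduces that $w$ is a zero divisor from $w^2=2[2]^2w$, which is your alternative route, and your primary factorization $\bigl((0,1)+\varnothing\bigr)\cdot w=0$ via Lemma \ref{w*(r,s)} is an equally valid one-line variant. You also make explicit the check $w-2[2]^2\varnothing\neq 0$, which the paper leaves implicit, and citing part (iii) of Proposition \ref{properties} rather than \cite{FG00} for $\Sq(\Sigma_{1,0})$ being a domain is likewise fine.
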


\begin{proof}
The element $w$
is a nontrivial linear combination of basis elements from Theorem \ref{Basis torus}, so it is nonzero. The equation $w^2=2[2]^2w$ from Lemma \ref{w^2} shows that $w$ is a zero divisor.

Since $\Sq(\Sigma_{1,0})$ is a domain by \cite{FG00}, we see that no algebra embedding $\Y(\Sigma_{1,0}) \hookrightarrow \Sq(\Sigma_{1,0})$ could exist.
\end{proof}

\begin{corollary}\label{positive torus basis}
The algebra $\Y(\Sigma_{1,0})$ is isomorphic to $\mathbb{C} \times \Sq(\Sigma_{1,0})^{ev}$, where $\Sq(\Sigma_{1,0})^{ev}$ is the even subalgebra of $\Sq(\Sigma_{1,0})$, generated by those diagrams whose intersection number mod $2$ with any simple curve is $0$. In particular, the algebra $\Y(\Sigma_{1,0})$ has a positive basis $P$ with structure constants in $\mathbb{N}[q,q^{-1}]$ given by $P=\{e\}\cup\{c_{r,s} \mid r,s \in \mathbb{Z}\}$ where $e=\frac{1}{2[2]^2}w$ and $c_{r,s}=(1-e)(r,s)_Y.$ The product of pairs of elements of this basis is given by:
\[\begin{split}
	c_{k,l} \cdot c_{r,s} & = q^{2(rl-ks)}c_{k+r,s+l} + q^{-2(rl-ks)}c_{k-r,s-l},\\
	c_{k,l} \cdot e & =0= e \cdot c_{k,l},\\
	e^2 & = e.
\end{split}\]
\end{corollary}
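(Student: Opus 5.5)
The plan is to split $\Y(\Sigma_{1,0})$ along a central idempotent built from $w$, and to identify the complementary summand with the image of $JW$.

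\textbf{Step 1: the idempotent $e$.} Set $e=\frac{1}{2[2]^2}w$. Lemma \ref{w^2} gives $e^2=\frac{1}{4[2]^4}\cdot 2[2]^2 w=e$. Lemma \ref{w*(r,s)_Y} gives $e\cdot(k,l)_Y=(k,l)_Y\cdot e=(-1)^{g_1}2e$. Since $\{(k,l)_Y\}\cup\{w\}$ is a basis, $e$ commutes with every basis element and is therefore central. Hence $\Y(\Sigma_{1,0})=e\Y(\Sigma_{1,0})\times(1-e)\Y(\Sigma_{1,0})$ as algebras. The first factor is $\mathbb{C}e\cong\mathbb{C}$, because $e$ times any basis element is a multiple of $e$.

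\textbf{Step 2: the basis $P$ and its structure constants.} Define $c_{r,s}=(1-e)(r,s)_Y=(r,s)_Y-(-1)^{\gcd(r,s)}2e$. Passing from the basis $\{w\}\cup\{(r,s)_Y\}$ to $P=\{e\}\cup\{c_{r,s}\}$ is unitriangular, so $P$ is a basis; here the indices $(r,s)$ are taken modulo $(r,s)\sim(-r,-s)$ as in the paper's convention. Using $(1-e)^2=1-e$, centrality, and $(1-e)w=0$, Theorem \ref{torus presentation} gives
\[c_{k,l}c_{r,s}=(1-e)(k,l)_Y(r,s)_Y=q^{2d}c_{k+r,l+s}+q^{-2d}c_{k-r,l-s},\]
because the $\alpha w$ term is killed. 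Here $d=-(ks-lr)=rl-ks$, matching the stated formula. We also get $c_{k,l}e=(1-e)e(k,l)_Y=0$. All structure constants are therefore $0$, $1$, or $q^{\pm 2d}$, which lie in $\mathbb{N}[q,q^{-1}]$.

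\textbf{Step 3: the isomorphism.} Define $\Phi:\Y(\Sigma_{1,0})\to\mathbb{C}\times\Sq(\Sigma_{1,0})$ by $y\mapsto(\lambda(y),JW(y))$, where $ey=\lambda(y)e$. Then $\lambda$ is multiplicative because $e$ is a central idempotent, and $JW$ is an algebra map, so $\Phi$ is an algebra map.
\begin{itemize}
\item \emph{Injectivity.} By Corollary \ref{JW not injective}, $\ker JW=\mathrm{span}(w)=\mathbb{C}e$, and $\lambda(e)=1\neq 0$, so $\Phi$ is injective.
\item \emph{Image of $JW$.} From the proof of Lemma \ref{product of threaded curves}, $JW((r,s)_Y)=(2r,2s)_T$. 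Hence $JW(c_{r,s})=(2r,2s)_T$ and $JW(e)=0$, so $\Phi(e)=(1,0)$ and $\Phi(c_{r,s})=(0,(2r,2s)_T)$.
\end{itemize}
It remains to show that $\mathrm{span}\{(2r,2s)_T\}$ is exactly $\Sq(\Sigma_{1,0})^{ev}$; this is the main obstacle.

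\textbf{Step 4: identifying the even subalgebra.} A multicurve $(a,b)^n$ with $\gcd(a,b)=1$ has mod-$2$ intersection number zero with every simple curve exactly when its mod-$2$ homology class $n(a,b)$ vanishes, i.e.\ when $n$ is even. So $\Sq(\Sigma_{1,0})^{ev}$ is spanned by the multicurves $(a,b)^{2m}$. Since $T_{2m}$ is an even polynomial, monic of degree $2m$, the elements $(2ma,2mb)_T=T_{2m}((a,b))$ span the same space by unitriangularity. This space equals $\mathrm{span}\{(2r,2s)_T\}$. It is closed under multiplication by the Frohman--Gelca formula, so it is a subalgebra. Therefore $\Phi$ is onto $\mathbb{C}\times\Sq(\Sigma_{1,0})^{ev}$, which completes the proof.
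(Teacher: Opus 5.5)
Your argument is correct and is essentially the paper's. The paper likewise obtains $P$ by a triangular change of basis, reads off the products from Theorem~\ref{torus presentation}, and gets $\Y(\Sigma_{1,0})\cong\mathbb{C}\times\Sq(\Sigma_{1,0})^{ev}$ by splitting along the central idempotent $e$ and applying $JW$ to the $(1-e)$-summand; it phrases this as a map of split short exact sequences rather than your explicit $\Phi=(\lambda,JW)$. Your Step 4 supplies the surjectivity of $JW$ onto $\Sq(\Sigma_{1,0})^{ev}$, which the paper leaves implicit. To close it fully, note that the Kauffman bracket relations preserve the $\mathbb{Z}/2$-homology class, so even diagrams with crossings also expand into even multicurves.
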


\begin{proof}
The linear map from $\Y(\Sigma_{1,0})$ to itself defined on the basis $\{w\} \cup \{(r,s)_{Y} : r,s \in \mathbb{Z}\}$ by $w \mapsto e, (r,s)_{Y} \mapsto (1 - e)(r,s)_{Y}$ is upper triangular with respect to this basis, with $1$ on the diagonal everywhere except the first entry, which is nonzero. Thus, the image of this basis, $P$, is also a basis. The multiplication identities for pairs of $P$ follow by applying the identities in Theorem \ref{torus presentation}. We have a chain map of exact sequences
\[\begin{tikzcd}
0 \arrow[r] & \left< e \right> \arrow[r, "\iota"] \arrow[d, "{[-]_{\{e\}}}"] & \Y(\Sigma_{1,0}) \arrow[r, "{(1 - e) \cdot -}"] \arrow[d, equal] & \left< 1-  e \right> \arrow[r] \arrow[d, "JW"] & 0\\
0 \arrow[r] & \mathbb{C} \arrow[r, "{e \cdot -}"] & \Y(\Sigma_{1,0}) \arrow[r, "JW"] & \Sq(\Sigma_{1,0})^{ev} \arrow[r] & 0
\end{tikzcd}\]
where $[-]_{\{e\}}$ is the coordinate map of the basis $\{e\}$. The vertical map $JW$ is an isomorphism since the other two vertical maps are. Since $e$ is a central idempotent, the top sequence is split, so the bottom must be too. That is, $\Y(\Sigma_{1,0}) \cong \mathbb{C} \times \Sq(\Sigma_{1,0})^{ev}$. Note that the basis $P$ corresponds to the basis $\{(1,0)\} \cup \{(0,(2r,2s)_{T}) : r,s \in \mathbb{Z}\}$ of $\mathbb{C} \times \Sq(\Sigma_{1,0})^{ev}$.
\end{proof}

\subsection{Center of the $\rm{PGL}_2$ skein algebra of the torus}
We can now compute the center of $\Y(\Sigma_{1,0}).$

\begin{theorem}\label{torus center}
\phantom\\
\begin{enumerate}[(a)]
\begin{item}
If $q$ is not a root of unity, then the center of $\Y(\Sigma_{1,0})$ is as follows:
\begin{equation*}
Z(\Y(\Sigma_{1,0})) = \mathrm{span}\{\varnothing,w\}.
\end{equation*}
\end{item}
\begin{item}
If $q=\omega_n$ is a primitive $n^{\text{th}}$ root of unity, then the center of $S_{\omega_{n}}^{Y}(\Sigma_{1,0})$ is
\begin{equation*}
Z(S_{\omega_{n}}^{Y}(\Sigma_{1,0})) = \mathrm{span}(\{(k,l)_{Y} : n \mid 4\mathrm{gcd}(k,l)\} \cup \{w\}).
\end{equation*}
\end{item}
\end{enumerate}
\end{theorem}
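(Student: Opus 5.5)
We work in the basis $\{w\}\cup\{(r,s)_Y\}$ and use the product-to-sum formulas of Theorem \ref{torus presentation}. A cleaner route is the decomposition of Corollary \ref{positive torus basis}: $\Y(\Sigma_{1,0})\cong \mathbb{C}e\times (1-e)\Y(\Sigma_{1,0})$, with $e=\frac{1}{2[2]^2}w$ a central idempotent. The second factor has basis $c_{r,s}=(1-e)(r,s)_Y$ (with $c_{r,s}=c_{-r,-s}$) and multiplication
\[c_{k,l}c_{r,s}=q^{2(rl-ks)}c_{k+r,l+s}+q^{-2(rl-ks)}c_{k-r,l-s}.\]
The center of a product is the product of the centers, so
\[Z(\Y(\Sigma_{1,0}))=\mathbb{C}e\oplus Z\big((1-e)\Y(\Sigma_{1,0})\big).\]
It therefore suffices to compute the center of the algebra spanned by the $c_{r,s}$, which is a quantum-torus-type (symmetrized) algebra.

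Take a central element $z=\sum_{(k,l)}a_{k,l}c_{k,l}$, a finite sum over representatives with $(k,l)\sim(-k,-l)$. Commuting with $c_{1,0}$ and with $c_{0,1}$ gives, for each generator,
\[c_{k,l}c_{r,s}-c_{r,s}c_{k,l}=(q^{2D}-q^{-2D})\big(c_{k+r,l+s}-c_{k-r,l-s}\big),\qquad D=rl-ks.\]
Hence $[z,c_{r,s}]=\sum_{k,l}a_{k,l}(q^{2D}-q^{-2D})(c_{k+r,l+s}-c_{k-r,l-s})$, and we need this to vanish for $(r,s)=(1,0),(0,1)$. The standard Frohman--Gelca argument applies. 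Pick $(k,l)$ with $a_{k,l}\neq0$ that is extremal, for instance maximizing $|k|+|l|$ or a suitable lexicographic order chosen so that $c_{k+r,l+s}$ (or $c_{k-r,l-s}$) has strictly larger norm than every other index appearing. Then the coefficient of that basis element is $\pm a_{k,l}(q^{2D}-q^{-2D})$. The step I expect to be the main obstacle is making this leading-term argument rigorous despite the identification $(k,l)\sim(-k,-l)$ and the cancellations that can occur near the origin. I would handle this by passing through $JW$: it sends $c_{k,l}$ to $(2k,2l)_T$ in $\Sq(\Sigma_{1,0})^{ev}$, so the known leading-term analysis of \cite{FG00} can be invoked directly.

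Carrying this out, $a_{k,l}\neq0$ forces $q^{4D}=1$ for $D=l$ and $D=-k$ (from $r=1,s=0$ and $r=0,s=1$). Since $\gcd(k,l)$ is an integer combination of $k$ and $l$, this is equivalent to $q^{4\gcd(k,l)}=1$.

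For generic $q$ this forces $k=l=0$, so the center is spanned by $e$ and $c_{0,0}=2(1-e)$, that is, by $\varnothing$ and $w$, which gives (a). For $q=\omega_n$ a primitive $n$th root of unity, the condition $q^{4\gcd(k,l)}=1$ becomes $n\mid 4\gcd(k,l)$. Conversely, any such $c_{k,l}$ is central because $q^{4(rl-ks)}=1$ for every $(r,s)$, so both coefficients above are equal and the commutator vanishes term by term. Translating back, $c_{k,l}=(k,l)_Y-\frac{(-1)^{\gcd(k,l)}2}{2[2]^2}w$ by Lemma \ref{w*(r,s)_Y}, so the span of these $c_{k,l}$ together with $e$ equals the span of $\{(k,l)_Y : n\mid 4\gcd(k,l)\}\cup\{w\}$, which gives (b).
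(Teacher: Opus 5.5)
Your argument is correct, but it is organized differently from the paper's. The paper does not split off the idempotent. It works directly in the basis $\{w\}\cup\{(k,l)_Y\}$ and writes the commutator $[(k,l)_Y,(r,s)_Y]$ together with its correction term $-\frac{1}{[2]^2}\big((-1)^{g_3}+(-1)^{g_4}\big)w$. For the reverse inclusion it takes the highest term $(k_m,l_m)_Y$ of a central element and tests it against one B\'ezout-adapted curve $(r,s)$, whose determinant with $(k_m,l_m)$ is $\pm\gcd(k_m,l_m)$; this gives $q^{4g}=1$ in a single step. It then subtracts that (now central) top term and repeats.

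You instead use the splitting $\Y(\Sigma_{1,0})=\mathbb{C}e\times(1-e)\Y(\Sigma_{1,0})$ from Corollary \ref{positive torus basis}. This removes all the $w$-bookkeeping and makes the presence of $w$ in the center transparent, since it spans the extra $\mathbb{C}$ factor. You then test only against the fixed elements $c_{1,0},c_{0,1}$ and apply B\'ezout to the resulting conditions $q^{4k}=q^{4l}=1$.

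Two small points should be made explicit. First, the extremal-term step only certifies the extremal index, so you need the same ``subtract the central top term and repeat'' induction that the paper uses. Second, the appeal to Frohman--Gelca through $JW$ is not literal, because the center of the subalgebra $\Sq(\Sigma_{1,0})^{ev}$ need not equal $Z(\Sq(\Sigma_{1,0}))\cap\Sq(\Sigma_{1,0})^{ev}$. What does work is that $(k,l)_T\mapsto(2k,2l)_T$ matches structure constants, so it identifies $\mathcal{S}_{q^4}(\Sigma_{1,0})$ (with $q^2$ as the chosen square root of $q^4$) with $\Sq(\Sigma_{1,0})^{ev}$. With that reparametrization, part (b) becomes the known Kauffman bracket computation with $q$ replaced by $q^4$. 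The direct leading-term check in the $c_{k,l}$ basis, with the normalization $l>0$, or $l=0$ and $k\ge 0$, is equally short and avoids this detour.
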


\begin{proof}
	We will use the basis $\{w\} \cup \{(k,l)_{Y} : l \ge 0\}$ for $\Y(\Sigma_{1,0})$. We note the element $\varnothing$ is central and by Theorem \ref{torus presentation}, $w$ is central. As before, take
	\[g_{1} = \mathrm{gcd}(k,l), g_{2} = \mathrm{gcd}(r,s), g_{3} = \mathrm{gcd}(k + r, l + s), g_{4} = \mathrm{gcd}(k - r, l - s).\]
	We have
	\begin{align*}
		(k,l)_{Y} \cdot (r,s)_{Y} & = q^{-2 \begin{vmatrix} k & l \\ r & s \end{vmatrix}} (k + r, l + s)_{Y} + q^{2 \begin{vmatrix} k & l \\ r & s \end{vmatrix}} (k - r, l - s)_{Y}\\
					  & \quad + \frac{(-1)^{g_{1} + g_{2}}2 - (-1)^{g_{3}} q^{-2 \begin{vmatrix} k & l \\ r & s \end{vmatrix}} -  (-1)^{g_{4}} q^{2 \begin{vmatrix} k & l \\ r & s \end{vmatrix}}}{[2]^{2}} w,\\
	\end{align*}
	and $(r,s)_Y \cdot (k,l)_{Y}$ has an identical formula except with $q$ replaced by $q^{-1}.$ Therefore, we have that the commutator is
	\[[(k,l)_{Y},(r,s)_{Y}] = \left( q^{-2 \begin{vmatrix} k & l \\ r & s \end{vmatrix}} - q^{2 \begin{vmatrix} k & l \\ r & s \end{vmatrix}} \right) \left((k + r, l + s)_{Y} - (k - r, l - s)_{Y} - \frac{1}{[2]^{2}}((-1)^{g_{3}} + (-1)^{g_{4}})w \right).\]
	 From the formula for the commutator and the fact that $w$ is central, we have the inclusions
	\begin{align*}
		\mathrm{span}\{(0,0)_{Y},w\} = \mathrm{span}\{\varnothing, w\} & \subseteq Z(\Y(\Sigma_{1,0})),\\
		\mathrm{span}(\{w\} \cup \{(k,l)_{Y} : n \mid 4\mathrm{gcd}(k,l)\}) & \subseteq Z(S_{\omega_{n}}^{Y}(\Sigma_{1,0})).\\
	\end{align*}
	For the reverse inclusion, we use the standard argument using highest term. We place an order on torus knots threaded by $Y_{n}$: $(k,l)_{Y} > (r,s)_{Y}$ if $k>r$ or if $k \geq r$ and $l>s$, where representatives $(k,l),(r,s)$ are determined similarly to the convention given in Section \ref{torus section}. Then if 
	\[a = cw +b \varnothing+ \sum_{i = 1}^{m} c_{i}(k_{i},l_{i})_{Y} \in \Y(\Sigma_{1,0})\]
	is central with $(k_i,l_i)<(k_j,l_j)$ for $i<j$ and for $c_m \neq0,$ we must have that for all pairs $(r,s)$, $[(k_m,l_m)_Y,(r,s)_Y] \in \mathrm{span}\{w\}$. Let $g=\gcd(k_m,l_m)$ and choose $(r,s)$ so that $k_{m}r - l_{m}s = g.$ Then 
	\[[(k_m,l_m)_Y,(r,s)_{Y}] = \left( q^{-2g} - q^{2g}\right) \left((k_{m} + r, l_{m} + s)_{Y} - (k_{m} - r, l_{m} - s)_{Y} - \frac{1}{[2]^{2}}((-1)^{g_{3}} + (-1)^{g_{4}})w \right)\]
    is in $\mathrm{span}\{w\}$. Since $\{(k,l)_{Y} : k,l \in \mathbb{Z}\} \cup \{w\}$ is a basis, we must have $q^{-2g} - q^{2g} = 0$. If $q$ is generic, then $g=0,$ so $a \in \mathrm{span}(w,\varnothing).$ If $q$ is a primitive root of unity of order $n,$ we must have $q^{4g}=1$ or that $n \mid 4g.$ Since $a$ and $c_m(k_m,l_m)$ are both central in this case, we see that $a-c_m(k_m,l_m)$ is central as well and the above argument may be repeated. Therefore, $a \in \mathrm{span}(\{w\} \cup\{(k,l)_{Y} : n \mid 4 \mathrm{gcd}(k,l)\})$. We then have the desired reverse inclusions, so
	\begin{align*}
		Z(\Y(\Sigma_{1,0})) & = \mathrm{span}\{\varnothing, w\} & (\text{$q$ generic}),\\
		Z(S_{\omega_{n}}^{Y}(\Sigma_{1,0})) & = \mathrm{span}(\{(k,l)_{Y} : n \mid 4 \mathrm{gcd}(k,l)\} \cup \{w\}) & (\text{$\omega_{n}$ a primitive $n^{\text{th}}$ root of unity}).\\
	\end{align*}

\end{proof}

\begin{remark}\label{center comparison}
We may compare our center to the center in the Kauffman bracket case. The corresponding description of the center for the Kauffman bracket skein algebra is as follows. Let $q$ be a primitive $n^{th}$ root of unity. Then the center is
\[Z(\Sq(\Sigma_{1,0})) = \mathrm{span}\{(k,l)_{T} : n \mid \mathrm{gcd}(k,l)\}.\]
In terms of the Frobenius homomorphism, let $q^{2}$ be a primitive $n^{th}$ root of unity. Let $\epsilon = q^{n^{2}}$. It was computed in \cite{FKBL19} that
\[Z(\Sq(\Sigma_{1,0})) = \begin{cases}
\mathrm{Fr}(\mathcal{S}_{\epsilon}(\Sigma_{1,0})^{ev}) & \text{if} \ n \equiv 0 \ \text{mod} \ 2,\\
\mathrm{Fr}(\mathcal{S}_{\epsilon}(\Sigma_{1,0})) & \text{if} \ n \equiv 1 \ \text{mod} \ 2,\\
\end{cases}\]
where $\mathcal{S}_{\epsilon}^{ev}(\Sigma)$ is the subalgebra of $\mathcal{S}_{\epsilon}(\Sigma)$ generated by those diagrams whose intersection number mod $2$ with any simple curve is $0$.\\
For the $\rm{PGL}_2$ skein algebra, the Frobenius homomorphism $\rm{Fr}^Y$ for the torus was constructed in \cite{HSSSSW26}. If $q^{2}$ is a primitive $n^{\text{th}}$ root of unity,
\[\mathrm{Fr}^Y(\mathcal{S}_{\epsilon}^{Y}(\Sigma_{1,0})) =
\begin{cases}
Z(\Y(\Sigma_{1,0}))^{ev^{Y}} & \text{if} \ n \equiv 0 \ \text{mod} \ 2,\\
Z(\Y(\Sigma_{1,0})) & \text{if} \ n \equiv 1 \ \text{mod} \ 2\\
\end{cases}\]
where $Z(\Y(\Sigma_{1,0}))^{ev^{Y}}$ is the subalgebra generated by $\{(2k,2l)_{Y} : (k,l)_{Y} \in Z(\Y(\Sigma_{1,0}))\}$. In particular, for $n \equiv 0$ mod $2$, the center properly contains the image of the Frobenius homomorphism since $w \notin \mathrm{Fr}(\mathcal{S}_{\epsilon}^{Y}(\Sigma_{1,0}))$, by \cite{HSSSSW26}.
\end{remark}

\section{The once-punctured torus}\label{1-punctured torus section}

In this section, we will construct a basis and algebra presentation for the $\rm{PGL}_2$ skein algebra of the once-punctured torus, $\Y(\Sigma_{1,1})$. The basis and presentation will borrow some names of the diagrams involved with the closed torus.

As in the case of the closed torus, we will define $K \in \Y(\Sigma_{1,1})$ to be the element represented by the same underlying diagram as $K \in \Y(\Sigma_{1,0}).$ The choice of placement of the puncture in the complement of $K$ does not change the element since the complement of $K$ is a disk. Also as in the case of the closed torus, we will define $(k,l) \in \Y(\Sigma_{1,1})$ for $k,l$ coprime to be the element represented by the torus knot. Here as well, the choice of placement of the puncture does not affect the element. Finally, we let $w \in \Y(\Sigma_{1,1})$ represent the lift of the element $w \in \Y(\Sigma_{1,0}).$

We next introduce the diagrams that will be specific to our study of the once-punctured torus. Let $b_{k,l}$ be the diagram obtained by taking the $(k,l)$ torus knot and attaching a bigon around the puncture of $\Sigma_{1,1}$. Let $x$ be the boundary-parallel curve around the puncture of $\Sigma_{1,1}$.

\subsection{A basis for the once-punctured torus}

\begin{theorem}\label{punctured torus basis}
	The set 
	\[\alpha = \{x^{m}b_{k,l}^{\delta}(k,l)^{n} : \mathrm{gcd}(k,l) = 1, m,n \in \mathbb{Z}_{\ge 0}, \delta \in \{0,1\}\} \cup \{x^{m}w : m \in \mathbb{Z}_{\ge 0}\}\]
	 is a basis of $\Y(\Sigma_{1,1})$.
\end{theorem}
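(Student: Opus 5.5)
The plan is to prove spanning and linear independence separately. For linear independence I would push everything into $\Sq(\Sigma_{1,1})$ using the injectivity of $JW$ from Corollary \ref{JW injective on punctured torus}. For spanning I would adapt the rank argument of Lemma \ref{span torus}, now keeping track of the puncture.

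\emph{Spanning.} Expand all crossings and let $D$ be a crossingless web on $\Sigma_{1,1}$. Sort its components by the rank of the image of $H_1$ in $H_1(\Sigma_{1,0};\mathbb{Z})$, exactly as in Lemma \ref{span torus}.
\begin{enumerate}[(1)]
\item A rank-zero component lies in a disk, which may contain the puncture. If it does not, it reduces to a scalar. If it does, it lies in an annulus around the puncture, and Proposition \ref{JW injective on pants} ($\Y(\Sigma_{0,2})\cong\mathbb{C}[a]$) turns it into a polynomial in $x$.
\item If some component has rank one, say of slope $(k,l)$, then the other components live in the complement. That complement is a pair of pants whose three boundary circles are two copies of $(k,l)$ and the puncture. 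I would use the $\Sigma_{0,3}$ results of Chbili (Proposition \ref{JW injective on pants}) to show that $\Y(\Sigma_{0,3})$ is spanned by products of the three boundary curves together with the single extra web joining the two $(k,l)$-boundaries around the puncture. Glued back into $\Sigma_{1,1}$, these give exactly $x^m(k,l)^n$ and $x^m b_{k,l}(k,l)^n$.
\item If some component $G$ has rank two, its complement is a union of disks, one of them punctured, so the remaining components are handled by case (1). For $G$ itself, I would follow the inductive I-H argument of Lemma \ref{span torus}, using \cite{Dew73,Neg94} to move edges across the puncture as needed.
\end{enumerate}
In case (3) the new feature is that the monogon produced at the end of the induction may encircle the puncture. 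Such a monogon does not vanish; instead it produces terms like $\ranktwomonogonpunctured$. A bigon around the puncture can be pulled off using I-H moves such as $\bigonhexpuncturedi$ through $\bigonhexpuncturedvii$, at the cost of an $x$ factor and lower terms. Inducting on the number of vertices, a rank-two web then lies in the span of $x^mK$ and the rank $\le 1$ elements. Since $K=\frac{1}{[2]}(w+(0,1)+(1,0)+(1,1)-[3]\varnothing)$ (as a lift from Equation (\ref{def of w})), this gives spanning by $\alpha$.

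\emph{Independence.} By Proposition \ref{properties}(iv), $\Sq(\Sigma_{1,1})$ has the basis $\{x^m(k,l)^N\}$. From the proof of Lemma \ref{w^2} we have $JW(x^mw)=[2]x^{m+1}+[2]^2x^m$. We also have $JW(x^m(k,l)^n)=x^m((k,l)^2-\varnothing)^n$, whose leading term is $x^m(k,l)^{2n}$. I would compute $JW(b_{k,l})$ by expanding the Jones--Wenzl boxes as in the computation of $JW(K)$, and expect a leading term of the form $c\,x(k,l)^2$ with $c\neq0$ plus lower terms, or possibly an odd-degree term. Ordering the basis of $\Sq(\Sigma_{1,1})$ by slope, then by $N$, then by $m$, the images of $\alpha$ would then be triangular with distinct leading terms, and hence linearly independent.

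The main obstacle is that the leading terms may collide. For instance, $JW(x^m b_{k,l}(k,l)^n)$ and $JW(x^{m+1}(k,l)^{n+1})$ might share the leading term $x^{m+1}(k,l)^{2n+2}$. If that happens, I would not rely on leading terms alone. Instead I would compute the exact two-term leading part of $JW(b_{k,l})$ modulo $\mathrm{span}\{x(k,l)^2,(k,l)^2,x,\varnothing\}$ and check that it is not proportional to the image of $x(k,l)$. Concretely, this means showing that $JW(b_{k,l})$ and $JW(x(k,l))$ are independent in that four-dimensional space; independence of the full family would then follow by multiplying by $x^m(k,l)^{2n}$-type terms and using that $\Sq(\Sigma_{1,1})$ is a domain.
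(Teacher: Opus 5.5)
Your spanning argument is essentially the paper's: the same case split by rank, Chbili's pants result in the rank-one case, and the same I-H reduction of the punctured bigon on $K$. Two small corrections there, neither of which affects the conclusion since $x^mK\in\mathrm{span}(\alpha)$:
\begin{itemize}
\item In the rank-one case, the pair of pants should be $\Sigma_{1,1}$ cut along a $(k,l)$-curve disjoint from the whole diagram. The complement of $G$ itself need not be a pair of pants, e.g.\ when the puncture lies in a disk face of $G$.
\item The I-H/triangle sequence turns the punctured bigon into $\frac{2-[3]}{[2]}K$ plus lower terms, with no $x$ factor.
\end{itemize}

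The genuine problem is the independence step. You apply $JW$ to powers of the $\mathrm{PGL}_2$ boundary curve as if $JW(x)=x$. But $x$ is a vertexless loop, so exactly as for $(k,l)$ one has $JW(x)=x^2-\varnothing$ (Lemma \ref{JW of generators}). Thus
\[
JW(x^m w)=(x^2-\varnothing)^m\bigl([2]x+[2]^2\varnothing\bigr),\qquad JW\bigl(x^m(k,l)^n\bigr)=(x^2-\varnothing)^m\bigl((k,l)^2-\varnothing\bigr)^n .
\]

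Your formulas are not just cosmetically off. Taken at face value they give $JW(x^mw)=[2]JW(x^{m+1})+[2]^2JW(x^m)$, so by injectivity of $JW$ they would make $\alpha$ linearly dependent; your claim of distinct leading terms overlooks this collision. The collision you do anticipate, between $x^mb_{k,l}(k,l)^n$ and $x^{m+1}(k,l)^{n+1}$, is an artifact of the same error. Your fallback would not resolve it anyway: independence of two elements in a four-dimensional quotient, together with $\Sq(\Sigma_{1,1})$ being a domain, does not give independence of the infinite family obtained by multiplying by further factors.

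With the correct formulas, use
\[
JW(b_{k,l})=x(k,l)^2+\tfrac{1}{[2]}x^2+\tfrac{2}{[2]}(k,l)^2-\tfrac{2}{[2]}\varnothing,
\]
and order monomials first by the power of $(k,l)$, then by the power of $x$. The leading monomials of the three families are then $x^{2m}(k,l)^{2n}$, $x^{2m+1}(k,l)^{2n+2}$ and $x^{2m+1}$. These never coincide: the $x$-exponent is even only for the first family, and the last two are separated by whether the $(k,l)$-exponent is zero. So the triangularity argument in the multicurve basis goes through with no collisions at all, which is exactly the paper's proof.
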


\begin{proof}
The proof follows from Lemmas \ref{punctured torus linear independence} and \ref{punctured torus spanning set} below.
\end{proof}

To aid us, we will frequently use the map $JW: \Y(\Sigma_{1,1}) \rightarrow \Sq(\Sigma_{1,1})$ which is injective by Corollary \ref{JW injective on punctured torus}. We record the following helpful data.

\begin{lemma}\label{JW of generators}
The following computations hold for the map $JW : \Y(\Sigma_{1,1}) \to \Sq(\Sigma_{1,1})$:
\begin{align*}
JW(x) & = x^{2} - \varnothing,\\
JW(b_{k,l}) & = \frac{1}{[2]}(x^2-\varnothing)+\frac{2}{[2]}((k,l)^2-\varnothing)+\frac{1}{[2]}\varnothing+x(k,l)^2,\\
JW((k,l)) & = (k,l)^{2} - \varnothing,\\
JW(w) & = [2]x + [2]^{2}\varnothing.\\
\end{align*}
\end{lemma}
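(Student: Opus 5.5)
The plan is to compute each image directly from the definition of $JW$ in part (i) of Proposition \ref{properties}. Every edge is replaced by two parallel strands carrying a Jones--Wenzl box, and every trivalent vertex is replaced by the three pairwise connections in $\jwvertex$. We then expand each box as $\jwstrand = \jwidentity + \frac{1}{[2]}\jwreturn$ and simplify every resulting crossingless diagram in $\Sq(\Sigma_{1,1})$. Trivial loops are evaluated by $\trivialcomponent = -[2]\varnothing$, and what remains is written in the multicurve basis of part (iv) of Proposition \ref{properties}. This is the same bookkeeping already carried out for $JW(K)$ in the proof of Lemma \ref{w^2}.

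\textbf{Simple closed curves.} For a curve $c$, either $x$ or $(k,l)$, the image is a doubled copy of $c$ carrying a single box. The identity term gives two parallel copies of $c$, that is $c^2$. In the $\frac{1}{[2]}$ term the cap and cup join the two strands around the annulus into a single curve. This curve turns once forward and once back along $c$, so it bounds a disk. It therefore contributes $\frac{1}{[2]}(-[2])\varnothing = -\varnothing$, and both $JW(x)=x^2-\varnothing$ and $JW((k,l))=(k,l)^2-\varnothing$ follow.

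\textbf{The element $w$.} The proof of Lemma \ref{w^2} already computes
\[JW(K) = x + \frac{1}{[2]}\bigl((0,1)^2+(1,0)^2+(1,1)^2-2\varnothing\bigr).\]
We combine this with $JW$ of $(0,1)$, $(1,0)$ and $(1,1)$ from the previous step and use $\varnothing \mapsto \varnothing$. By linearity,
\[JW(w) = [2]x + (0,1)^2+(1,0)^2+(1,1)^2 - 2\varnothing - \bigl((0,1)^2-\varnothing\bigr) - \bigl((1,0)^2-\varnothing\bigr) - \bigl((1,1)^2-\varnothing\bigr) + [3]\varnothing .\]
This simplifies to $[2]x + (1+[3])\varnothing = [2]x + [2]^2\varnothing$.

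\textbf{The bigon graph $b_{k,l}$.} This graph has two trivalent vertices and three edges: the arc of the $(k,l)$ knot outside the bigon, and the two bigon edges passing on either side of the puncture. We place a box on each edge and expand, which gives $2^3 = 8$ states, grouped by which of the three boxes are resolved as $\jwreturn$.
\begin{itemize}
\item \emph{No return.} The vertex connections produce two parallel copies of $(k,l)$, routed so that the puncture is separated off, together with a loop around the puncture. This reads as $x(k,l)^2$.
\item \emph{Return on the knot arc only.} The bigon strands close up into a curve encircling the puncture, giving doubled $x$-type terms.
\item \emph{Return on one bigon edge only} (two cases). These yield $(k,l)^2$ terms.
\item \emph{Several returns.} Trivial loops appear, each contributing $-[2]$.
\end{itemize}
Collecting coefficients should give exactly
\[\frac{1}{[2]}(x^2-\varnothing) + \frac{2}{[2]}\bigl((k,l)^2-\varnothing\bigr) + \frac{1}{[2]}\varnothing + x(k,l)^2 .\]

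To cut down the case analysis, I would first reduce to $(k,l) = (0,1)$. An orientation-preserving diffeomorphism of $\Sigma_{1,1}$ fixing the puncture carries $(0,1)$ to $(k,l)$ and $b_{0,1}$ to $b_{k,l}$, and it commutes with $JW$. So it suffices to draw the single picture for $b_{0,1}$.

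The main obstacle is this last computation. The difficulty is correctly identifying, in each of the eight states, which resolved curves encircle the puncture, which are parallel to the knot, and which are trivial. The two-bigon-edge states are the delicate ones: depending on how the cap and cup join, the resulting curve either bounds a disk or is parallel to $x$. I would check each state carefully against the expected total, for instance by confirming that specializing $x \mapsto -[2]\varnothing$ (filling the puncture) gives the $JW$ image of the bigon-reduced knot $\frac{1-[3]}{[2]}(k,l)$ in the closed torus. Using equation (\ref{ngon formulae}), that image is $\frac{1-[3]}{[2]}\bigl((k,l)^2-\varnothing\bigr)$.
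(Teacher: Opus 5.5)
Your proposal is correct and is the same direct diagrammatic computation the paper invokes; the paper gives no further detail, and your derivation of $JW(w)$ from the formula for $JW(K)$ is exactly the bookkeeping used in the proof of Lemma \ref{w^2}. The one point you left open resolves cleanly: with returns on both bigon edges, the surviving curve is the boundary of a neighborhood of the knot arc together with its four half-edge stubs, which is an embedded disk missing the puncture, so that state gives a trivial loop (not an $x$-parallel one). Hence the three two-return states contribute $-\frac{1}{[2]}\varnothing$ each and the three-return state contributes $\frac{1}{[2]}\varnothing$, giving exactly the constant term $-\frac{2}{[2]}\varnothing$ of the stated formula.
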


\begin{proof}
The identities follow by direct diagrammatic computation.
\end{proof}

\begin{lemma}\label{punctured torus linear independence}
The set $\alpha$ is linearly independent in $\Y(\Sigma_{1,1}).$    
\end{lemma}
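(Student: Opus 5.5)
Since $JW:\Y(\Sigma_{1,1})\to\Sq(\Sigma_{1,1})$ is injective (Corollary \ref{JW injective on punctured torus}) and an algebra homomorphism, it suffices to show that $JW(\alpha)$ is linearly independent in $\Sq(\Sigma_{1,1})$. By part (iv) of Proposition \ref{properties}, $\Sq(\Sigma_{1,1})$ has the basis of crossingless multicurves without trivial loops. On $\Sigma_{1,1}$ these are exactly the products $x^{m}(k,l)^{n}$ with $\gcd(k,l)=1$ and $m,n\ge 0$, because any two disjoint nontrivial non-peripheral simple curves on $\Sigma_{1,1}$ are parallel. The plan is to compute a leading term of $JW(a)$ for each $a\in\alpha$ with respect to a suitable order on this multicurve basis, and to check that distinct elements of $\alpha$ have distinct leading terms.

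Using Lemma \ref{JW of generators} and multiplicativity, together with the fact that $x$ is central and commutes with every $(k,l)$, we get
\begin{align*}
JW(x^{m}(k,l)^{n}) &= (x^{2}-\varnothing)^{m}((k,l)^{2}-\varnothing)^{n},\\
JW(x^{m}b_{k,l}(k,l)^{n}) &= (x^{2}-\varnothing)^{m}\bigl(x(k,l)^{2}+\tfrac{2}{[2]}(k,l)^{2}+\tfrac{1}{[2]}x^{2}+\cdots\bigr)((k,l)^{2}-\varnothing)^{n},\\
JW(x^{m}w) &= (x^{2}-\varnothing)^{m}([2]x+[2]^{2}\varnothing).
\end{align*}
All products here are products of commuting disjoint curves, so they expand directly into the multicurve basis with no skein resolution needed. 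Order the multicurves $x^{a}(k,l)^{b}$ first by the slope $(k,l)$, then by $b$, then by $a$. The leading terms are then:
\begin{itemize}
\item $x^{2m}(k,l)^{2n}$ for $x^{m}(k,l)^{n}$, which has even $x$-degree and even $(k,l)$-degree;
\item $x^{2m+1}(k,l)^{2n+2}$ for $x^{m}b_{k,l}(k,l)^{n}$, which has odd $x$-degree and positive even $(k,l)$-degree;
\item $x^{2m+1}$ for $x^{m}w$, which has odd $x$-degree and $(k,l)$-degree zero, so it cannot coincide with a $b$-leading term.
\end{itemize}
These three families have pairwise distinct leading monomials. Within each family, distinct $(m,n,\text{slope})$ also give distinct leading monomials. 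Note that elements with $n=0$ and no $b$ factor are independent of slope, which is consistent with $\alpha$ being a set. A standard triangularity argument then finishes the proof. Take a vanishing linear combination, look at the maximal leading monomial among the terms with nonzero coefficient; it appears in exactly one term, which forces that coefficient to be zero. Repeating gives linear independence.

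The main obstacle is making the leading-term claim rigorous across different slopes. Terms such as $\tfrac{1}{[2]}x^{2}$ and $\varnothing$ in $JW(b_{k,l})$ have no $(k,l)$-dependence and are shared among all slopes. I would deal with this by grading by the total $(k,l)$-degree $b$ first, with the slope used only to separate multicurves of equal positive $b$. The leading monomial is chosen to have maximal $b$, and all slope-independent pieces have $b=0$, so they never compete with a genuine leading term of positive $b$. The only remaining thing to verify is the exact coefficient of $x(k,l)^{2}$ in $JW(b_{k,l})$, which is $1$ and hence nonzero, as read off from Lemma \ref{JW of generators}.
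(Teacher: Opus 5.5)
Your proposal is correct and takes the same route as the paper: apply the injective map $JW$ from Corollary \ref{JW injective on punctured torus}, use Lemma \ref{JW of generators} and multiplicativity, and compare the images in the multicurve basis $\{x^{m}(k,l)^{n}\}$ of $\Sq(\Sigma_{1,1})$. The paper simply asserts that the images are independent, and your leading-term argument supplies that triangularity check. When you write it up, fix the order once and use the version from your last paragraph (grade by $(k,l)$-degree first, so the slope-free monomials $x^{a}$ lie below everything of positive degree), because the slope-first order you state earlier is ambiguous on monomials with no $(k,l)$ factor.
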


\begin{proof}
To see that $\alpha$ is linearly independent, we apply the algebra map $JW: \Y(\Sigma_{1,1}) \rightarrow \Sq(\Sigma_{1,1})$ to our set $\alpha.$ First, we rewrite $\alpha$ as

\begin{equation*}
\alpha=\{x^m(k,l)^n \} \cup \{x^mb_{k,l}(k,l)^n\}\cup\{x^mw\}.
\end{equation*}

Then, using Lemma \ref{JW of generators}, $JW(\alpha)$ is given by

\begin{equation*}
JW(\alpha)=A\cup B \cup C,
\end{equation*}

where
\begin{align*}
A&=\{(x^2-\varnothing)^m((k,l)^2-\varnothing)^n\}\\
B&=\{(x^2-\varnothing)^m\left(\frac{1}{[2]}(x^2-\varnothing)+\frac{2}{[2]}((k,l)^2-\varnothing)+\frac{1}{[2]}\varnothing+x(k,l)^2\right)\left((k,l)^2-\varnothing\right)^n\}\\
C&=\{([2]x+[2]^2)(x^2-\varnothing)^{m}\}.
\end{align*}

By using the multicurve basis of $\Sq(\Sigma_{1,1})$ we may deduce that $JW$ maps $\alpha$ injectively to a linearly independent set in $\Sq(\Sigma_{1,1})$. Therefore, $\alpha$ is linearly independent.
\end{proof}

\begin{lemma}\label{punctured torus spanning set}
The set $\alpha$ is a spanning set of $\Y(\Sigma_{1,1}).$
\end{lemma}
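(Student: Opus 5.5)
I will follow the strategy of Lemma \ref{span torus}, adapted to $\Sigma_{1,1}$ and using the puncture to organize cases. First, the crossing relation lets me write every element of $\Y(\Sigma_{1,1})$ as a linear combination of crossingless diagrams. So it is enough to show that a crossingless diagram $D$ lies in $\mathrm{span}(\alpha)$. For each connected component $G$ of $D$, I consider the rank of the image of $H_1(G)\to H_1(\Sigma_{1,0})$ after filling the puncture. I also record whether $G$ separates the puncture from the rest of the surface, that is, whether $G$ has nontrivial image in $H_1(\Sigma_{1,1})$ relative to the peripheral class.

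The components will be handled by type.

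\textbf{Rank zero, puncture not enclosed.} Such a component lies in a disk and reduces to a scalar.

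\textbf{Rank zero, puncture enclosed.} Such a component lies in an annulus around the puncture. By Proposition \ref{JW injective on pants} ($\Y(\Sigma_{0,2})\cong\mathbb{C}[a]$) and Proposition \ref{properties}(ii), it is a polynomial in $x$.

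\textbf{Rank one.} A rank-one component $G$ lies in a regular neighborhood of a simple closed curve $(k,l)$ together with possibly the puncture. That neighborhood is either an annulus or a pair of pants $\Sigma_{0,3}$ whose boundaries are two parallel copies of $(k,l)$ and the puncture. I will first show that $\Y(\Sigma_{0,3})$ is spanned by monomials in the three boundary curves together with the single extra web consisting of a curve with a bigon attached around one puncture (the analogue of $b_{k,l}$). I will justify this spanning set by reducing any trivalent graph in pants using I-H moves and the bigon/trigon formulas (\ref{ngon formulae}), by the same induction on vertex count as in Lemma \ref{span torus}. Alternatively, I can compare dimensions through the injective map $JW$ of Proposition \ref{JW injective on pants}. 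Since both outer boundaries map to $(k,l)$, this gives $G\in\mathrm{span}\{x^m(k,l)^n,\ x^m b_{k,l}(k,l)^n\}$. When several parallel rank-one components appear, they all have the same slope and combine into products. Products such as $b_{k,l}^2$ then have to be reduced back to $\delta\in\{0,1\}$. I will do this by computing inside the pants algebra, possibly checking the identity through $JW$ and Lemma \ref{JW of generators}.

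\textbf{Rank two.} In this case all other components are rank zero, so they contribute scalars or powers of $x$. The complement of $G$ in the closed torus consists of disks, and exactly one of them contains the puncture. Following Lemma \ref{span torus}, I use the result of \cite{Dew73,Neg94} that rank-two graphs with equal vertex count are related by I-H moves. Here the moves must avoid the puncture, so I need the refined statement for graphs on $\Sigma_{1,1}$, or I must track the additional moves that pass an edge across the puncture. By induction on the number of vertices, I reduce $G$ to a combination of $K$ (equivalently $w$), terms with fewer vertices, and graphs of the shape $K_{n-2}$ with a bigon around the puncture. The bigonal configuration is handled by the local move drawn for the punctured case: applying I-H across the bigon around the puncture converts it into diagrams containing $x$, a monogon (which vanishes), or lower-vertex graphs. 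With $w=[2]K+(\text{multicurves})$, the base case of two vertices gives $x^m w$ plus lower terms.

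I expect the main obstacle to be the rank-two case, specifically justifying that I-H moves in the presence of the puncture suffice to bring any rank-two web to the standard forms. The closed-torus argument allowed moves that sweep edges across the point that is now a puncture. To get around this, I will first reduce by I-H moves to a graph in which the puncture sits in a face adjacent to a single edge, since any face can be shrunk by I-H moves to a bigon or monogon. I will then resolve that face using the bigon relation in the annulus around the puncture. The product reduction $b_{k,l}^2\in\mathrm{span}(\alpha)$ in the pants is a secondary computational point, and I can verify it through $JW$.
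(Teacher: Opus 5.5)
Your handling of rank-zero and rank-one components is fine. For the pants step, the $JW$ route works once you observe that $JW(\Y(\Sigma_{0,3}))$ lies in the span of boundary monomials whose three exponents have equal parity, which the images of your proposed spanning set already span; the paper simply cites \cite{Chb12}. For rank-two graphs with $2n\ge 6$ vertices, your normal form ($K_{n-2}$ with a bubble around the puncture) is the paper's, and it vanishes because $K_{n-2}$ still has a monogon away from the puncture.

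The gap is the four-vertex case, where this normal form is $K$ with a bubble around the puncture. Your local I-H move across that bubble does not remove it. Applying the I-H relation to one edge of a punctured bubble on a strand gives
(bubble) $-\frac{1}{[2]}$(strand) $=$ (strand carrying a tadpole whose loop encircles the puncture) $-\frac{1}{[2]}\,x\cdot$(strand).
The tadpole term has just as many vertices, and it is \emph{not} zero, because the monogon relation applies only to a monogon bounding a disk. On the $(k,l)$ curve this identity reads
\[
b_{k,l}-\frac{1}{[2]}(k,l)=T_{k,l}-\frac{1}{[2]}\,x(k,l),
\]
where $T_{k,l}$ is the punctured tadpole on $(k,l)$. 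So $T_{k,l}=0$ would force $b_{k,l}\in\mathrm{span}\{(k,l),x(k,l)\}$, contradicting Lemma \ref{punctured torus linear independence}. Your fallback fails for the same reason. Shrinking the punctured face to a bigon and resolving it ``with the bigon relation in the annulus around the puncture'' is not possible, since the formulas (\ref{ngon formulae}) require the face to be a disk. A bigon around the puncture is precisely what survives as $b_{k,l}$ in the basis $\alpha$.

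The missing idea is to reduce the face that does \emph{not} contain the puncture. A connected rank-two graph with four vertices is cellular on the closed torus, with $E-V=2$ faces of total degree $12$. So if the puncture sits in a bigon, the other face is a decagon. I-H moves are supported in small disks around edges, so they can be performed away from the puncture, and each costs only terms with fewer vertices. The paper uses an explicit sequence of them to transfer sides from the puncture-free face to the punctured one until the puncture-free face is a triangle. The trigon formula then gives $\frac{2-[3]}{[2]}K$ plus lower terms, which closes the induction.

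A separate short sequence of I-H flips in the paper shows that a punctured bubble can be carried through I-H moves like an ordinary edge. That is how the paper reaches the normal form in $\Sigma_{1,1}$, which is the point you flagged but left open. Your remark that any face can be shrunk by I-H moves would also suffice if applied to the puncture-free face. As stated, however, it is unproved: faces whose boundary walk revisits a vertex or edge need care. So you would need either that argument or an explicit sequence as in the paper.
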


\begin{proof}
First, we will classify connected components and show they can be written as a linear combination of $\alpha$.

Curves on the punctured torus are either $(k,l)$ for $k,l$ coprime, or $x$, and thus are in the span of $\alpha.$ For cubic graphs, we can consider subcases based on the rank of the graph viewed as a subgraph of the closed torus (as defined in the proof of Lemma \ref{span torus}).
    
First, suppose that the graph has rank zero. Then there exists an embedding of a disk or annulus, depending on whether the puncture is present, which is surjective onto the graph. Then we can reduce the graph to a multiple of $\varnothing$ or a polynomial in $x$, respectively.
    
Next, if the graph has rank one, then there exists an embedding of an annulus or three-punctured sphere (thought of as a punctured annulus) with two boundary components parallel to a $(k,l)$ curve, for some $k,l$ coprime, which is surjective onto the graph. In the former case, we can reduce the graph to a polynomial in the $(k,l)$ curve. In the latter, the web is contained in a neighborhood diffeomorphic to the 3-punctured sphere, and so it can be reduced to a linear combination of elements of the form  $x^mb_{k,l}^{\delta}(k,l)^n$ by \cite{Chb12}.

If the graph has rank two with at least four vertices, we will describe how to reduce it to a linear combination of $K$ and graphs with rank zero or rank one, which are handled above. A slight modification of the proof of Lemma \ref{span torus} will be made to take into account the puncture. We note the following sequence of I-H flips.
\[\modifiedih \sim \modifiedihi \sim \modifiedihii \sim \modifiedihiii \sim \modifiedihiv.\] 
Thus, an analog of an I-H move holds on a bubbled punctured edge. For a rank-two graph with $2n$ vertices for $n \geq 2$, we then have that our diagram is I-H equivalent to:
\[\ranktwomonogonpunctured,\]
which is zero when $n > 2$. For $n = 2$, our graph is equivalent to the above diagram with no monogons, and we can perform the following sequence of I-H moves:
\[\bigonhexpuncturedi \sim \bigonhexpuncturedii \sim \bigonhexpuncturediii \sim \bigonhexpuncturediv \sim \bigonhexpuncturedv \sim \bigonhexpuncturedvi.\]
Now, using the triangle relation in Equation (\ref{ngon formulae}), we have
\[\bigonhexpuncturedvi = \left( \frac{2 - [3]}{[2]} \right) \bigonhexpuncturedvii = \left( \frac{2 - [3]}{[2]} \right) \hexpunctured = \left( \frac{2 - [3]}{[2]} \right) K.\]
When $n = 1$, our  rank-two graph is I-H equivalent to $K.$ Thus, we can apply induction to see all rank-two graphs reduce to linear combinations of elements of $\alpha$. Thus, all connected components are linear combinations of elements of $\alpha$.
\end{proof}

\subsection{An algebra presentation for the once-punctured torus}

In this section, we will find a finite presentation for $\Y(\Sigma_{1,1}).$ We begin with the following proposition, which will allow us to show that $\Y(\Sigma_{1,1})$ is finitely generated by following an adaptation of the argument for the Kauffman bracket case given in \cite{BP00}.

\begin{proposition}\label{qbar induction}
	Let $X \subseteq \mathbb{Q} \cup \{\infty\}$. Suppose that $X$ satisfies the following properties: 
	\begin{enumerate}[(i)]
		\begin{item}
			$0,1,\infty \in X$,
		\end{item}
		\begin{item}
			for all $k,l,r,s \in \mathbb{Z}$ such that $k/l,r/s \in X$ and
			\[\mathrm{det} \begin{pmatrix} k & l \\ r & s \end{pmatrix} = 1,\]
            we have
			\[\frac{k + r}{l + s} \in X \iff \frac{k - r}{l - s} \in X.\]
		\end{item}
	\end{enumerate}
	Then $X = \mathbb{Q} \cup \{\infty\}$.
\end{proposition}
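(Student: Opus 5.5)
We argue by strong induction over the Farey (Stern--Brocot) tree. Recall that every element of $\mathbb{Q}\cup\{\infty\}$ other than $0/1$ and $1/0$ can be written as a mediant $\frac{k+r}{l+s}$ of a Farey pair $k/l, r/s$ with $ks - lr = \pm 1$. Write each slope in lowest terms with the sign convention of Section \ref{torus section}, i.e.\ $l\ge 0$ and $l=0\Rightarrow k>0$. We measure complexity by $c(k/l) = |k|+|l|$, and we prove by induction on $c$ that every reduced fraction $p/t$ lies in $X$.

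\textbf{Base cases.} The fractions of complexity $1$ are $0/1$ and $1/0=\infty$, which lie in $X$ by (i). Those of complexity $2$ are $1/1$ and $-1/1$. The first is in $X$ by (i). For the second, take $(k,l)=(0,1)$ and $(r,s)=(1,1)$, which gives $\det = 0\cdot 1 - 1\cdot 1 = -1$. We therefore swap the pair and use $(k,l)=(1,1)$, $(r,s)=(0,1)$, which has determinant $1$. Then $\frac{k+r}{l+s} = \frac{1}{2}$ and $\frac{k-r}{l-s} = \frac{1}{0}=\infty\in X$, so (ii) gives $\frac12\in X$. To reach $-1$, use the pair $(1,0),(0,1)$ with determinant $1$: here $\frac{1+0}{0+1} = 1 \in X$, so (ii) gives $\frac{1-0}{0-1} = -1 \in X$.

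\textbf{Inductive step.} Let $p/t$ be reduced with $c(p/t)\ge 3$. The key lemma is that there exist integer vectors $(k,l),(r,s)$ with $\det\begin{pmatrix} k&l\\ r&s\end{pmatrix}=1$ such that $(p,t)=(k+r,l+s)$ and $c(k/l),\,c(r/s),\,c\big((k-r)/(l-s)\big) < c(p/t)$. Granting the lemma, the induction hypothesis places $k/l$, $r/s$, and $(k-r)/(l-s)$ in $X$, and (ii) then gives $p/t\in X$. The same mechanism works in the other direction, with the roles of the sum and the difference swapped.

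\textbf{Proving the lemma.} For the construction, assume $t>0$ and $p\neq 0$; the excluded cases have $c\le 2$. By B\'ezout, choose the Farey parents of $p/t$, namely the unique pair $(k,l),(r,s)$ with $0\le l,s\le t$, $k+r=p$, $l+s=t$, and $ks-lr=1$. These are the two endpoints of the Stern--Brocot interval in which $p/t$ first appears, so both have strictly smaller $|k|+|l|$. The difference $(k-r,l-s)$ has $|l-s|<t$, and $|k-r|\le |k|+|r|$ grows with the same sign pattern. Since $p/t$ is a mediant, $k$ and $r$ have the same sign (or one of them is zero) when $p\neq 0$. Hence $|k-r|\le \max(|k|,|r|)<|p|$, and the complexity of the difference is smaller.

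The main obstacle is exactly this sign and bookkeeping check. One must confirm that the parent pair can always be ordered so that the determinant is $+1$ and not $-1$; if not, replace $(r,s)$ by $(-r,-s)$, which exchanges the roles of the sum and the difference. One must also treat negative slopes, which follow symmetrically by negating $k$ and $r$ (negating $p$). Finally, the boundary cases where a parent is $0/1$ or $1/0$ need handling, and these are covered by (i). I would carry out this verification carefully for $p>0$ and then deduce the case $p<0$ from the symmetric argument.
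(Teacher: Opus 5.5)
Your proof is correct, but it takes a different route from the paper.

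The paper never introduces a complexity function. It considers the set $M$ of matrices $\begin{pmatrix} a&b\\ c&d\end{pmatrix}\in\mathrm{SL}_2(\mathbb{Z})$ whose two rows, together with their sum and difference, all have slopes in $X$. It checks $I\in M$; this is where $-1\in X$ comes out of (ii), exactly as in your base case. It then shows $M$ is closed under left multiplication by $S^{\pm1}$ and $T^{\pm1}$: each such move introduces at most one new slope among the four, and (ii) supplies it. Since $S,T$ generate $\mathrm{SL}_2(\mathbb{Z})$, and every reduced $k/l$ is the first row of a unimodular matrix by B\'ezout, $X$ is everything.

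Your strong induction on $|p|+|t|$ through Stern--Brocot parents is essentially the unpacked version of this. Your descent is the Euclidean algorithm that underlies the fact that $S$ and $T$ generate $\mathrm{SL}_2(\mathbb{Z})$. The paper's route is shorter, and the determinant-sign bookkeeping disappears automatically. Yours is self-contained and gives an explicit recursion, which in the application becomes a concrete recipe for writing each $(p,t)$ in terms of $(1,0),(0,1),(1,1)$ via the commutator relation.

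One bookkeeping step needs repair. The two strict inequalities you assert separately are each false at the boundary:
\begin{itemize}
\item $|l-s|<t$ fails when $t=1$, e.g.\ $2/1$ with parents $1/1$ and $1/0$;
\item $\max(|k|,|r|)<|p|$ fails when $|p|=1$, e.g.\ $1/2$ with parents $0/1$ and $1/1$.
\end{itemize}
Hypothesis (i) is not what rescues these cases, since the parents' membership in $X$ is not the issue. The correct observation is this. Because $k,r$ share a sign and $l,s\ge 0$, you have $|k-r|+|l-s|\le |k|+|r|+l+s=|p|+t$. Equality holds only if $\min(|k|,|r|)=0$ and $\min(l,s)=0$. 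For nonzero primitive parents this forces them to be $0/1$ and $1/0$, so $p/t=\pm1$, which is a base case.

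Two smaller points:
\begin{itemize}
\item To fix the determinant sign, simply swap the two parents. This leaves the sum unchanged and only negates the difference, so no roles are exchanged.
\item For $p<0$, ``by symmetry'' should mean rerunning the identical argument with $k,r\le 0$, which works verbatim. It should not mean applying a symmetry to $X$, since $X$ is not assumed invariant under $k/l\mapsto -k/l$.
\end{itemize}
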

\begin{proof}
We make use of the following construction. Define the set
\[M = \left\{ \begin{pmatrix} a & b \\ c & d \end{pmatrix} \in \mathrm{SL}_{2}(\mathbb{Z}) : \frac{a}{b}, \frac{c}{d}, \frac{a + c}{b + d}, \frac{a - c}{b - d} \in X \right\}.\]
We first claim $M = \mathrm{SL}_{2}(\mathbb{Z})$. Recall that
\[S = \begin{pmatrix} 0 & -1 \\ 1 & 0 \end{pmatrix}, T = \begin{pmatrix} 1 & 1 \\ 0 & 1 \end{pmatrix}\]
generate $\mathrm{SL}_{2}(\mathbb{Z})$. We will show by induction on word length that $M = \mathrm{SL}_{2}(\mathbb{Z})$.

First, it may be shown that the identity matrix $I \in \mathrm{SL}_{2}(\mathbb{Z})$ satisfies $I \in M$ since $1/0,0/1,1/1 \in X$ by the first property and $1/(-1) \in X$ by the second property.\\
Next, let $A \in M$. We claim that $S^{\pm 1}A,T^{\pm 1}A \in M$. If such a matrix $B = S^{\pm 1}A$ or $B = T^{\pm 1}A$ is of the form
\[B = \begin{pmatrix} a & b \\ c & d \end{pmatrix},\]
it can be shown that $a/b, c/d \in X$ since $A \in M$. Additionally, if $B = S^{\pm 1}A$ or $B = T^{-1}A$, then $\frac{a + c}{b + d} \in X$ since $A \in M$ and so $\frac{a-c}{b-d} \in X$ by the second property of $X.$
Similarly, if $B = TA$, then $\frac{a - c}{b - d} \in X$ since $A \in M$ and so $\frac{a+c}{b+d} \in X$ by the second property of $X.$ Thus, in each case, we have $B \in M$.
It follows by induction that all words in $S^{\pm 1}$ and $T^{\pm 1}$ belong to $M$, so that $M = \mathrm{SL}_{2}(\mathbb{Z})$.

We may now prove the proposition. Let $k/l \in \mathbb{Q} \cup \{\infty\}$. Without loss of generality, assume $\mathrm{gcd}(k,l) = 1$; otherwise, we may reduce the fraction. By B\'{e}zout's identity, there exist $r,s \in \mathbb{Z}$ such that $kr + ls = 1$. The matrix
\[\begin{pmatrix} k & l \\ -s & r \end{pmatrix}\]
has determinant $kr + ls = 1$, so it is in $\mathrm{SL}_{2}(\mathbb{Z})$, and thus in $M$. In particular, then, its first row has associated fraction $k/l \in X$.
\end{proof}

\begin{theorem}\label{punctured torus presentation}
	  If $q^4 \neq 1,$ then $\Y(\Sigma_{1,1})$ has a presentation
	\[\Y(\Sigma_{1,1}) \cong \left< a,b,c,h \mid R \right>,\]
	with an isomorphism $f:\left< a,b,c,h \mid R \right> \rightarrow \Y(\Sigma_{1,1})$ given by 
	\[a \mapsto (0,1) + \varnothing, b \mapsto (1,0) + \varnothing, c \mapsto (1,1) + \varnothing, h \mapsto \frac{1}{[2]} w + q(0,1) + q^{-1}(1,0) + q(1,1) - q^{-1} \varnothing,\]
	the relations $R$ are given by
    \[\begin{split}
        abc = & -(q^{-6} - 2q^{-4} - q^{-2} + 4 - q^{2} - 2q^{4} + q^{6})c - (2q^{-5} - 3q^{-3} - 2q^{-1} + 4q - q^{5})h\\
			      & - (q^{-2} - 2 + q^{2})ac - (q^{-4} - 2 + q^{4})bc\\
			      & - (q^{-1} - q)ah + (q^{-5} - q^{-3})bh\\
			      & - (q^{-1} - q)ch + (q^{-2})h^{2},\\
    \end{split}\]
	the $q^{2}$-deformed commutators of pairs of $a,b,c$ given by
	\[\begin{split}
		[a,b]_{q^{2}} & = \left( \begin{split}
			& -(q^{-2} - 1 - q^{2} + q^{4})a + (q^{-4} - q^{-2} - 1 + q^{2})b\\
			& - (q^{-4} - q^{-2} - 1 + q^{2})c - (q^{-3} + q^{-1} - q - q^{3})h\\
		\end{split} \right),\\
		[b,c]_{q^{2}} & = \left( \begin{split}
			& -(q^{-4} - q^{-2} - 1 + q^{2})a + (q^{-4} - q^{-2} - 1 + q^{2})b\\
			& - (q^{-2} - 1 - q^{2} + q^{4})c - (q^{-3} + q^{-1} - q - q^{3})h\\
		\end{split} \right),\\
		[c,a]_{q^{2}} & = \left( \begin{split}
			& -(q^{-2} - 1 - q^{2} + q^{4})a + (q^{-2} - 1 - q^{2} + q^{4})b\\
			& - (q^{-2} - 1 - q^{2} + q^{4})c - (q^{-3} + q^{-1} - q - q^{3})h
		\end{split} \right),\\
	\end{split}\]
	along with the commutators of $a,b,c$ with $h$ given by
	\[\begin{split}
		[a,h] & = \left( \begin{split}
			& (q^{-3} - 2q^{-1} + 2q^{3} - q^{5})a + (q^{-4} - 2 + q^{4})h\\
			& + (q^{-1} - q^{3})ab - (q^{-3} - q)ac\\
		\end{split} \right),\\
		[b,h] & = \left( \begin{split}
			& -(q^{-1} - 2q + 2q^{5} - q^{7})a + (q^{-1} - 2q + 2q^{5} - q^{7})c\\
			& - (q - q^{5})ab + (q - q^{5})bc\\
		\end{split} \right),\\
		[c,h] & = \left( \begin{split}
			& -(q^{-3} - 2q^{-1} + 2q^{3} - q^{5})c - (q^{-4} - 2 + q^{4})h\\
			& + (q^{-3} - q)ac - (q^{-1} - q^{3})bc\\
		\end{split} \right).\\
	\end{split}\]
\end{theorem}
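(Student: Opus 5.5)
The plan has three parts: show that $f$ is well defined, show that it is surjective, and show that $R$ generates the whole kernel of the induced map. Throughout we work inside $\Sq(\Sigma_{1,1})$ via the injective map $JW$ of Corollary \ref{JW injective on punctured torus}, using Lemma \ref{JW of generators}.

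\textbf{Well-definedness.} Each relation in $R$ is a polynomial identity among $f(a),f(b),f(c),f(h)$. Its image under $JW$ is an identity in $\Sq(\Sigma_{1,1})$ among polynomials in $(0,1),(1,0),(1,1)$ and $x$. Products of curves are expanded with the Kauffman bracket resolution rule. For example, $(0,1)(1,0)=q^{1/2}(1,1)+q^{-1/2}(1,-1)$. The curve $(1,-1)$ is then eliminated using the Bullock--Przytycki cubic relation for $\Sq(\Sigma_{1,1})$, which expresses $x$ as a cubic in the three standard curves. Since $JW$ is injective, equality of the images is equivalent to the relation holding in $\Y(\Sigma_{1,1})$. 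This step is lengthy but routine. Note that $JW(h)$ is engineered so that the degree-one parts combine nicely: the $x$-term of $JW(w)/[2]$ is $x$.

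\textbf{Surjectivity.} We need that the image of $f$ contains $\alpha$ from Theorem \ref{punctured torus basis}.
\begin{enumerate}[(1)]
\item The image contains $(0,1),(1,0),(1,1)$ and $\varnothing$.
\item It contains $w$, since $h$ minus curve terms equals $\frac{1}{[2]}w$.
\item It contains $x$. One of the commutator relations, for instance $[a,h]$ before substitution, expresses $x$, or the product $(0,1)(1,0)$, in terms of the generators. Alternatively, $x$ comes from $JW^{-1}$ of $[2]x+[2]^2$.
\item It contains every curve $(k,l)$. Let $X$ be the set of slopes $k/l$ whose curve lies in the image, and apply Proposition \ref{qbar induction}. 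Condition (i) holds by (1). For condition (ii), when $\det=1$ we get a PGL$_2$ product-to-sum relation
\[(k,l)(r,s)=q^{2}(k+r,l+s)+q^{-2}(k-r,l-s)+(\text{terms in }b_{\cdot},x,w,\varnothing,(k,l),(r,s)).\]
This is obtained by applying a diffeomorphism to the case $(0,1)\cdot(1,0)$, which is computed in the proof of Lemma \ref{w*(r,s)} and lifted to $\Sigma_{1,1}$. Combining this product with the reversed product $(r,s)(k,l)$, whose coefficients are $q^{-2}$ and $q^{2}$, lets us solve for $(k+r,l+s)$ given $(k-r,l-s)$, and conversely. Here the hypothesis $q^4\neq1$ is needed so that the determinant $q^4-q^{-4}$ of this $2\times2$ system is nonzero. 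The extra terms, namely $b_{k\pm r,\cdot}$-type webs and $K$, must also be shown to lie in the image. This should follow from the same pair of equations, since $b$'s attached to $(k,l)$ or $(r,s)$ are determined by $x$ and the curves through their $JW$-images.
\item The image contains $b_{k,l}$ for every slope, since $JW(b_{k,l})$ is a polynomial in $x$ and $(k,l)$ and $JW$ is injective.
\end{enumerate}

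\textbf{Injectivity (main obstacle).} We must show $R$ is a complete set of relations. Following \cite{BP00}, first use $R$ to show that the quotient algebra $\langle a,b,c,h\mid R\rangle$ is spanned by a set of ordered monomials. The $q^2$-commutators let us reorder $a,b,c$ modulo lower terms, the commutators with $h$ let us move $h$ to the right, and the $abc$ relation reduces any monomial containing all three of $a,b,c$. This gives a spanning set of the form $a^ib^j h^m$, $b^jc^k h^m$, $a^ic^k h^m$, filtered by degree. We then compare with $\alpha$ degree by degree: show that $f$ maps this spanning set onto a set whose filtered pieces have the same dimension as the corresponding pieces of $\alpha$. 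Then the spanning set is a basis and $f$ is bijective.

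The hard part is getting the reduction and the degree bookkeeping to close up. It must be checked that $h$-powers correspond to $x^m$, and that the $w$-terms (the extra one-dimensional piece) are accounted for. I would first try a leading-term argument, taking leading terms in $\Sq(\Sigma_{1,1})$ via $JW$ in the multicurve basis, to show that the images of the reduced monomials are linearly independent.
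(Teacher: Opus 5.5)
Your overall architecture matches the paper's. Well-definedness comes from pushing each relation through the injective map $JW$ into the Bullock--Przytycki presentation. Surjectivity uses Proposition~\ref{qbar induction}. Injectivity uses the spanning set $\{a^ib^jc^kh^l : ijk=0\}$, whose $JW$-images are independent. Using $h$ to put $w$ (hence $K$) in the image is also fine.

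The genuine problem is step (4). Expanding the single crossing of $(k,l)$ and $(r,s)$ produces only the two resolved curves and one two-vertex rank-two web $\Theta$, which is a mapping-class image of $K$; no $b$-webs and no $x$ appear. Your plan to show that ``$b_{k\pm r,\cdot}$-type webs and $K$'' lie in the image is circular. Indeed $[2]\Theta=w+(k,l)+(r,s)+c-[3]\varnothing$, where $c$ is one of the two curves you are trying to reach. Your $2\times 2$ system with determinant $q^4-q^{-4}$ treats these extra terms as known, which they are not. Moreover, that determinant also vanishes at $q^4=-1$, which the theorem allows.

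What closes the step is the following. After applying the I--H relation, the reversed product $(r,s)\cdot(k,l)$ contains the \emph{same} web $\Theta$ with the same coefficient $[2]$, so it cancels in the commutator. This gives $\frac{1}{q^2-q^{-2}}[(k,l),(r,s)]+(k-r,l-s)=(k+r,l+s)$. That is the paper's identity, and it is exactly where $q^4\neq 1$ enters; with it the paper shows that $a,b,c$ alone generate.

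Alternatively, since you already have $w$ via $h$, you can use that $\phi_*(w)=w$ for every mapping class $\phi$ of $\Sigma_{1,1}$. This holds because $JW(w)=[2]x+[2]^2\varnothing$ (Lemma~\ref{JW of generators}) is mapping-class invariant and $JW$ is injective. The lifted computation of $(1,0)\cdot(0,1)$ then becomes $(k,l)(r,s)=q^{\mp 2}(k+r,l+s)+q^{\pm 2}(k-r,l-s)+w+(k,l)+(r,s)-[3]\varnothing$. A single product then suffices, and surjectivity needs no condition on $q$.

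For injectivity, the ingredient you are missing is the explicit identification $JW(f(a))=x^2$, $JW(f(b))=y^2$, $JW(f(c))=z^2$, $JW(f(h))=q^{1/2}xyz$ in the generators $x,y,z$ of \cite{BP00}. With this, $JW(f(a^ib^jc^kh^l))$ is a nonzero scalar times $x^{2i+l}y^{2j+l}z^{2k+l}$, plus terms of lower total degree in the Bullock--Przytycki monomial basis. When $ijk=0$, the smallest of the three exponents is $l$, so the exponent triple determines $(i,j,k,l)$. Linear independence follows at once, and no degree-by-degree dimension comparison with $\alpha$ is needed. The same identification also reduces well-definedness to a direct check in the presentation of \cite{BP00}, with no need to eliminate $(1,-1)$ via the cubic relation.

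A smaller slip in step (3): $JW^{-1}([2]x+[2]^2\varnothing)$ is $w$, not $x$. The correct statement is $x=[2]^{-2}(w-[2]^2\varnothing)^2-\varnothing$, which you can check through $JW$.
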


\begin{proof}

To show that $f$ is an isomorphism of algebras we must show that $f$ gives a well-defined algebra homomorphism which is surjective and injective.

To see that $f$ is well-defined, we must check that each relation $r \in R$ gets mapped to an identity $f(r)$ which holds in $\Y(\Sigma_{1,1}).$ This could be done by a direct computation, but it is more convenient to use the map $JW: \Y(\Sigma_{1,1}) \rightarrow \Sq(\Sigma_{1,1}),$ which is injective by Proposition \ref{JW injective on pants}, to see that each relation $r$ gets mapped to an identity $JW(f(r))$ which holds in $\Sq(\Sigma_{1,1}).$ For this, we recall the presentation of \cite{BP00} given by

\[\Sq(\Sigma_{1,1}) \cong \left< x,y,z \mid  [x,y]_{q^{1/2}} = \delta z, [y,z]_{q^{1/2}} = \delta x, [z,x]_{q^{1/2}} = \delta y \right>,\]
	with $\delta = q-q^{-1}.$

One may compute that $JW(f(a))=x^2,$ $JW(f(b))=y^2,$ $JW(f(c))=z^2$ and 
$JW(f(h))=q^{1/2}xyz.$ Then the presentation of \cite{BP00} may be used to verify for each relation $r \in R,$ $JW(f(r))$ is an identity in $\Sq(\Sigma_{1,1}).$

To show $f$ is surjective, we will actually show that the elements $f(a),f(b),f(c)$ generate $\Y(\Sigma_{1,1}).$ The use of $f(h)$ is redundant as a generator but convenient for our presentation. Thus, we will show that the subalgebra $\left< (1,0), (0,1), (1,1) \right> = \Y(\Sigma_{1,1})$. In this part, we use the extra assumption that $q^4 \neq 1$ so that we may divide by $q^2 -q^{-2}\neq 0.$

Define a total order on torus knots, identified with their slopes in $\mathbb{Q} \cup \{\infty\}$, by extending the usual one on $\mathbb{Q}$ with the convention that for all $k/l \in \mathbb{Q} \cup \{\infty\}$, $k/l \le 1/0$. If $(k,l)$ and $(r,s)$ are two torus curves which meet exactly once, we may use the skein relations to compute their commutator as follows. Without loss of generality, take $k/l < r/s$. Then the following commutator relation is computed by applying the skein relation at the single crossing of the $(k,l)$ and $(r,s)$ curves:

\begin{equation}\label{commutator equation}
\frac{1}{q^{2} - q^{-2}}[(k,l),(r,s)] + (k - r,l - s)=(k + r, l + s).
\end{equation}
Let $X =\{(r,s) \in \left< (1,0), (0,1), (1,1) \right> \}$. Under identification of a torus knot with its slope in $\mathbb{Q} \cup \{\infty\}$, we claim $X$ satisfies the properties of Proposition \ref{qbar induction}. $X$ satisfies the first property since $(1,0), (0,1)$, and $(1,1) \in X.$ Now, given $(k,l),(r,s) \in \left< (1,0), (0,1), (1,1) \right>$ with
\[\mathrm{det} \begin{pmatrix} k & l \\ r & s \end{pmatrix} = 1,\]
$(k,l)$ and $(r,s)$ intersect exactly once. Then Equation (\ref{commutator equation}) gives that $(k - r, l - s) \in \left< (1,0), (0,1), (1,1) \right>$ if and only if $(k + r, l + s) \in \left< (1,0), (0,1), (1,1) \right>$. Therefore, by Proposition \ref{qbar induction}, $X = \mathbb{Q} \cup \{\infty\}$. That is, all torus knots are generated by $\{(1,0),(0,1),(1,1)\}$.

We next observe that the elements $K,x,b_{k,l}$ are generated by $(1,0), (0,1)$ and $(1,1)$. The following identities can be verified by applying the map $JW: \Y(\Sigma_{1,1})\rightarrow \Sq(\Sigma_{1,1})$, which is injective by Proposition \ref{JW injective on pants}, to both sides and reducing in the multicurve basis of $\Sq(\Sigma_{1,1}).$

For $K$, we have that

\begin{equation}\label{K generated}
K=\left( \frac{1}{[2]^{2}(q - q^{-1})} \right) \left( q^{2}((0,1) \cdot (1,0) - (q^{2} - 1)(1,1)) - q^{-2} ((1,0) \cdot (0,1) - (q^{-2} - 1)(1,1)) \right).
\end{equation}

The element $x$ which is represented by a curve parallel to the boundary of $\Sigma_{1,1}$ is generated according to the following equation:

\begin{equation*}
x=\frac{1}{[2]^{2}}([2]K - (0,1) - (1,0) - (1,1) - \varnothing )^{2} - \varnothing.
\end{equation*}

Lastly, we observe that each bigon curve $b_{k,l}$ is generated by multicurves according to the following equation:

\begin{equation*}
b_{k,l}=\frac{1}{[2]} \left(([2]K - (0,1) - (1,0) - (1,1) - \varnothing)((k,l) + \varnothing) + x + 2(k,l) + \varnothing \right).
\end{equation*}

We have now shown that each generator of the basis $\alpha$ of $\Y(\Sigma_{1,1})$ is generated by the image of $f,$ so $f$ is surjective.

To show that $f$ is injective, we will argue that $f$ maps a spanning set of $\langle a,b,c,h \mid R \rangle $ to a linearly independent set of $\Y(\Sigma_{1,1}).$ We note that the relations $R$ may be used to show that $\{a^ib^jc^kh^l \mid i,j,k,l \in \mathbb{Z}_{\geq 0}, ijk=0 \}$ is a spanning set. To show that $f(\{a^ib^jc^kh^l \mid i,j,k,l \in \mathbb{Z}_{\geq 0}, ijk = 0\})$ is linearly independent, it will suffice to observe that $S=JW(f(\{a^ib^jc^kh^l \mid i,j,k,l \in \mathbb{Z}_{\geq 0}, ijk = 0\}))$ is linearly independent in the Kauffman bracket skein algebra $\Sq(\Sigma_{1,1}).$ As noted earlier, $JW(f(a))=x^2,JW(f(b))=y^2,JW(f(c))=z^2$ and $JW(f(h))=q^{1/2}xyz.$ It then follows from the monomial basis of $\Sq(\Sigma_{1,1})$ from \cite[Theorem 2.1]{BP00} that $S$ is linearly independent. Therefore, we have completed the proof that $f$ is an isomorphism.

\end{proof}

\subsection{Kernel of the puncture-filling map}

We now describe the kernel of the puncture-filling map $\Y(\Sigma_{1,1}) \rightarrow \Y(\Sigma_{1,0})$ associated to filling in the puncture of the once-punctured torus to obtain the closed torus.

\begin{theorem}\label{puncture filling}
	The inclusion $i : \Sigma_{1,1} \to \Sigma_{1,0}$ induces an epimorphism of algebras
	\[i_{\ast} : \Y(\Sigma_{1,1}) \to \Y(\Sigma_{1,0}).\]
	We have that when $q^4 \neq 1,$
	\[\mathrm{ker}(i_{\ast}) =  \left< aw, bw, cw \right>,\]
	where $a = (0,1) + \varnothing, b = (1,0) + \varnothing, c = (1,1) + \varnothing$.
\end{theorem}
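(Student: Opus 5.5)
We first show $\langle aw,bw,cw\rangle\subseteq\ker(i_*)$. Then we show that $\Y(\Sigma_{1,1})/\langle aw,bw,cw\rangle$ is spanned by elements whose images under $i_*$ form a basis of $\Y(\Sigma_{1,0})$. This forces the kernel to be no larger than the ideal. Write $I=\langle aw,bw,cw\rangle$, a two-sided ideal.

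\emph{Step 1: $I\subseteq\ker(i_*)$.} By definition $i_*(w)=w$ and $i_*((r,s))=(r,s)$. By Lemma \ref{w*(r,s)} in $\Y(\Sigma_{1,0})$,
\[i_*(aw)=((0,1)+\varnothing)\cdot w=-w+w=0,\]
and the same holds for $bw$ and $cw$. Surjectivity of $i_*$ is clear, since every web on $\Sigma_{1,0}$ can be isotoped off a point.

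\emph{Step 2: $w$ is central in $\Y(\Sigma_{1,1})$, and $((r,s)+\varnothing)w\in I$ for every slope.} By Lemma \ref{JW of generators}, $JW(w)=[2]x+[2]^2\varnothing$. The boundary curve $x$ is central in $\Sq(\Sigma_{1,1})$, so $JW(w)$ is central. Corollary \ref{JW injective on punctured torus} then makes $w$ central. In particular $wa,wb,wc\in I$.

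Let $X$ be the set of slopes $r/s$ with $((r,s)+\varnothing)w\in I$. It contains $0,1,\infty$. Now suppose $(k,l),(r,s)\in X$ with determinant $1$. Since $I$ is two-sided, we get
\[(k,l)(r,s)w\equiv-(k,l)w\equiv w \pmod I,\]
and likewise $(r,s)(k,l)w\equiv w$. Hence $[(k,l),(r,s)]w\in I$. Multiplying Equation (\ref{commutator equation}) on the right by $w$, and using $q^4\neq 1$, gives
\[((k+r,l+s)+\varnothing)w\equiv((k-r,l-s)+\varnothing)w \pmod I.\]
Proposition \ref{qbar induction} then shows $X=\mathbb{Q}\cup\{\infty\}$.

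\emph{Step 3: reduce the basis $\alpha$ modulo $I$.} From $w=[2]K-(0,1)-(1,0)-(1,1)+[3]\varnothing$ and Equation (\ref{K generated}), $w$ is a noncommutative polynomial in $(0,1),(1,0),(1,1)$. Substituting $(r,s)w\equiv -w$ term by term gives $w^2\equiv\gamma w$ for an explicit scalar $\gamma$. The proof of Lemma \ref{w^2} suggests $\gamma=2[2]^2$, but only the form $\gamma w$ matters.

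Next, since $[2]K-(0,1)-(1,0)-(1,1)-\varnothing=w-[2]^2\varnothing$, the displayed formula for $x$ reads
\[x=\tfrac{1}{[2]^2}\,(w-[2]^2\varnothing)^2-\varnothing.\]
So $x\equiv\lambda\varnothing+\mu w$ for scalars $\lambda,\mu$, and therefore $x^m w\equiv \nu_m w$. Finally, the formula for $b_{k,l}$ contains the term $(w-[2]^2\varnothing)((k,l)+\varnothing)$. Since $w((k,l)+\varnothing)\in I$, this shows $b_{k,l}$ is congruent to a combination of $(k,l)$, $\varnothing$ and $w$.

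Combining these, every element of $\alpha$ is congruent modulo $I$ to a linear combination of $\{w\}\cup\{(k,l)^n:\gcd(k,l)=1,\,n\ge 0\}$. Their images under $i_*$ are linearly independent by Theorem \ref{Basis torus}: $w$ differs from $[2]K$ by multicurves and $[2]\neq0$. Hence $i_*$ is injective on $\Y(\Sigma_{1,1})/I$, so $\ker(i_*)\subseteq I$.

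I expect Step 2 to be the main obstacle, namely propagating $((r,s)+\varnothing)w\in I$ from the three generating slopes to all slopes. The key point is the two-sidedness of $I$, which makes the commutator term in Equation (\ref{commutator equation}) vanish modulo $I$ after multiplying by $w$. The scalar bookkeeping in Step 3 is routine.
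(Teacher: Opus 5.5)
Your proof is correct. Step 2 is essentially the paper's Lemma \ref{finitely generated kernel}: the slope induction via Proposition \ref{qbar induction}, with the commutator term in Equation (\ref{commutator equation}) killed because the ideal is two-sided.

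Where you differ is the upper bound $\ker(i_*)\subseteq I$. The paper first builds a linear section $j$ of $i_*$ on the basis $\beta$ and identifies $\ker(i_*)$ with $\mathrm{im}(\mathrm{id}-j\circ i_*)$. Evaluating this on $\alpha$, it gets the geometric description of the kernel as the ideal generated by $x-[3]\varnothing$ and $b_{k,l}-\frac{1-[3]}{[2]}(k,l)$ (Lemma \ref{geometric kernel}). It then places these generators in $I$ using two further facts (Lemma \ref{w times curve kernel}):
\begin{itemize}
\item the JW-verified identity $[2]b_{k,l}-(1-[3])(k,l)=x-[3]\varnothing+w((k,l)+\varnothing)$;
\item an explicit constant-term check showing $w-2[2]^2\varnothing$ lies in the ideal generated by $a,b,c$.
\end{itemize}

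You argue dually. You show that modulo $I$ every element of $\alpha$ collapses into the span of $\{w\}\cup\{(k,l)^n\}$, and that this set maps to an independent set in $\Y(\Sigma_{1,0})$. Your route is shorter and needs no section or projection. It also needs only $w^2\equiv\gamma w$ for some scalar $\gamma$, not the value $\gamma=2[2]^2$; that value is forced anyway by Lemma \ref{w^2} once $I\subseteq\ker(i_*)$. What the paper's route buys is the intermediate, geometrically meaningful presentation of the kernel by the ``reducible after filling the puncture'' relations, which your argument bypasses.
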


\begin{proof}
It is clear that $i_*$ is surjective, as every diagram on $\Sigma_{1,0}$ has a preimage on $\Sigma_{1,1}.$ The computation of the kernel follows from the following Lemma \ref{geometric kernel}, Lemma \ref{w times curve kernel}, and Lemma \ref{finitely generated kernel}.
\end{proof}

First, we give a description of the kernel in geometric terms: it is generated by elements which arise from basis webs which have reducible components upon filling in the puncture.
\begin{lemma}\label{geometric kernel}
The kernel of $i_{\ast}$ is
\[\ker(i_*)= \left<x-[3] \varnothing, b_{k,l}-\left(\frac{1-[3]}{[2]}\right)(k,l) : (k,l) \ \text{is a torus knot} \right>.\]
\end{lemma}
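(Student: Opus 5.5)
Let $I$ denote the two-sided ideal $\left<x-[3]\varnothing,\ b_{k,l}-\frac{1-[3]}{[2]}(k,l)\right>$. We prove $I\subseteq\ker(i_*)$ directly, and then show $\ker(i_*)\subseteq I$ by a dimension count using the bases of Theorems \ref{punctured torus basis} and \ref{Basis torus}.

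\emph{Step 1: $I\subseteq\ker(i_*)$.} After the puncture is filled, $x$ bounds a disk, so the loop relation gives $i_*(x)=[3]\varnothing$. Likewise, the bigon of $b_{k,l}$ that encircled the puncture now bounds a disk, so the bigon formula in Equation (\ref{ngon formulae}) gives $i_*(b_{k,l})=\frac{1-[3]}{[2]}(k,l)$. Since $i_*$ is an algebra homomorphism, the two-sided ideal generated by these elements lies in the kernel.

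\emph{Step 2: the quotient $\Y(\Sigma_{1,1})/I$ is spanned by the image of a small set.} Consider the basis $\alpha$ from Theorem \ref{punctured torus basis}. Modulo $I$ we may replace $x$ by $[3]\varnothing$ and $b_{k,l}$ by a scalar multiple of $(k,l)$. This gives
\[
x^m b_{k,l}^\delta (k,l)^n \equiv c\,(k,l)^{n+\delta}, \qquad x^m w \equiv [3]^m w \pmod I,
\]
for a scalar $c$. Here $x$ is central, since it is boundary parallel. The element $b_{k,l}$ is a single diagram, and the monomial $b_{k,l}(k,l)^n$ is a product, so replacing the factor $b_{k,l}$ is legitimate in the quotient by a two-sided ideal. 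Hence $\Y(\Sigma_{1,1})/I$ is spanned by the images of
\[
\beta'=\{\varnothing,\ w\}\cup\{(k,l)^n : \gcd(k,l)=1,\ n>0\}.
\]

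\emph{Step 3: $i_*(\beta')$ is a basis of $\Y(\Sigma_{1,0})$.} We have $i_*((k,l)^n)=(k,l)^n$. Moreover, $i_*(w)$ is the element $w\in\Y(\Sigma_{1,0})$, which equals $[2]K$ plus a combination of multicurves with $[2]\neq 0$. So $i_*(\beta')$ differs from the basis $\beta$ of Theorem \ref{Basis torus} by a unitriangular change (up to the nonzero scalar $[2]$ on $K$), and is therefore a basis.

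\emph{Conclusion.} $i_*$ factors through a surjection $\Y(\Sigma_{1,1})/I\to\Y(\Sigma_{1,0})$. By Step 3 this map sends the spanning set $\beta'$ of Step 2 bijectively onto a basis. Hence the induced map is injective, and $\ker(i_*)=I$.

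\emph{Main obstacle.} The delicate point is Step 2: checking carefully that each basis element of $\alpha$ reduces modulo $I$ into $\mathrm{span}(\beta')$. This relies on the elements of $\alpha$ being genuine products in $\Y(\Sigma_{1,1})$ of $x$, $b_{k,l}$, $(k,l)$ and $w$, as the notation $x^mb_{k,l}^\delta(k,l)^n$ indicates. If instead they are single diagrams, with parallel copies stacked in the surface and no crossings, one notes that such a diagram equals the corresponding product in the skein algebra, because disjoint diagrams multiply by superposition. This is what lets us substitute the generators of $I$ factor by factor.
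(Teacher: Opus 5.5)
Your proof is correct and is essentially the paper's argument. The paper builds a linear section $j$ of $i_*$ from the basis $\beta$ and writes $\ker(i_*)=\mathrm{im}(\mathrm{id}-P)$ for the projection $P=j\circ i_*$; it then checks that $\mathrm{id}-P$ sends each element of $\alpha$ into the ideal, which is the same reduction of $\alpha$ modulo $x-[3]\varnothing$ and $b_{k,l}-\frac{1-[3]}{[2]}(k,l)$ that you carry out in Step 2, and your concluding step (the induced map on $\Y(\Sigma_{1,1})/I$ sends a spanning set bijectively onto a basis) is just an equivalent packaging of the same linear algebra.
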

\begin{proof}
Consider the map $j : \Y(\Sigma_{1,0}) \to \Y(\Sigma_{1,1})$ given by sending basis diagrams of $\beta$ from Theorem \ref{Basis torus} to the corresponding diagram with a puncture. This is well-defined since the isotopy class is invariant with respect to the placement of the puncture. Define $P = j \circ i_{\ast}$. Since $i_{\ast} \circ j = \mathrm{id}$, $P$ is a projection and $j$ is injective. Then we have the vector space decomposition
\[\Y(\Sigma_{1,1}) = \mathrm{ker}(P) \oplus \mathrm{im}(P).\]
Since $j$ is injective, we have
\[\mathrm{ker}(i_{\ast}) = \mathrm{ker}(P) = \mathrm{im}(\mathrm{id} - P).\]
Using the decomposition of the basis $\alpha$ of $\Y(\Sigma_{1,1})$ as
\[\alpha = A' \cup B' \cup C'\]
where
\[A' = \{x^{m}(k,l)^{n} : m,n \ge 0\}, B' = \{x^{m}b_{k,l}(k,l)^{n} : m,n \ge 0\}, C' = \{x^{m}w : m \ge 0\}\]
from Lemma \ref{punctured torus linear independence} and mapping under $\mathrm{id} - P$, we see that
\[\mathrm{ker}(i_{\ast}) = \mathrm{span}((\mathrm{id} - P)(A' \cup B' \cup C')).\]
We may compute
\begin{align*}
(\mathrm{id} - P)(A') & = \left\{ x^{m}(k,l)^{n} - [3]^{m}(k,l)^{n} : m,n\geq 0 \right\},\\
(\mathrm{id} - P)(B') & = \left\{ x^{m}b_{k,l}(k,l)^{n} - [3]^{m}\left( \frac{1 - [3]}{[2]} \right) (k,l)^{n + 1} : m,n \ge 0 \right\}\\
(\mathrm{id} - P)(C') & = \left\{ x^{m}w - [3]^{m}w : m \ge 0 \right\}.
\end{align*}
Each element in the spanning set of $\ker(i_*)$ is in the ideal generated by $x - [3]\varnothing$ and $b_{k,l} - \left( \frac{1 - [3]}{[2]} \right) (k,l)$. We also can see that each generator of such an ideal is in the kernel. Thus, the kernel of $i_{\ast}$ is precisely the following ideal:
\begin{equation*}
\ker(i_*)= \left<x-[3] \varnothing, b_{k,l}-\left(\frac{1-[3]}{[2]}\right)(k,l)\right>.
\end{equation*}
\end{proof}
As our next intermediate step, we show the kernel of $i_{\ast}$ is generated by the following infinite set.
\begin{lemma}\label{w times curve kernel}
The kernel of $i_{\ast}$ is given by
\[\mathrm{ker}(i_{\ast}) = \left< w((k,l) + \varnothing) \right>,\] where $(k,l)$ ranges over all torus knots.
\end{lemma}
\begin{proof}
We first argue that $b_{k,l}-\left(\frac{1-[3]}{[2]}\right)(k,l) \in \langle x-[3] \varnothing, w\cdot((k,l)+\varnothing) \rangle.$
On one hand, we can compute that

\begin{align*}
JW(x - [3]\varnothing + w((k,l) + \varnothing))&=(x^{2} - \varnothing - [3] \varnothing) + ([2]x + [2]^{2}\varnothing)(k,l)^{2}.
\end{align*}

On the other hand, we can compute that

\begin{align*}
JW([2]b_{k,l} - (1 - [3])(k,l))&=x^{2} - \varnothing + 2((k,l)^{2} - \varnothing) + \varnothing + [2]x(k,l)^{2} - (1 - [3])((k,l)^{2} - \varnothing)\\
&=(x^{2} - \varnothing - [3] \varnothing) + ([2]x + [2]^{2}\varnothing)(k,l)^{2},
\end{align*}

where the last equality uses the identity $[3]=[2]^2-1.$

Since $JW$ is injective for $\Sigma_{1,1}$ by Corollary \ref{JW injective on punctured torus},
\[[2]b_{k,l} - (1 - [3])(k,l) = x - [3]\varnothing + w((k,l) + \varnothing),\]
so $b_{k,l} - \left( \frac{1 - [3]}{[2]} \right) (k,l) \in \left< x - [3] \varnothing, w \cdot ((k,l) + \varnothing) \right>$.

We also have the following computation, which shows that $x-[3]\varnothing \in \langle wa,wb,wc \rangle.$

We see
\[x = \frac{1}{[2]^{2}} (w - [2]^{2})^{2} - \varnothing,\]
by applying $JW$ to both sides using the computation from Lemma \ref{JW of generators}. Thus,
\begin{align*}
x - [3]\varnothing & = \frac{1}{[2]^{2}} (w - [2]^{2})^{2} - \varnothing - [3]\varnothing\\
& = \frac{1}{[2]^{2}}(w^{2} - 2[2]^{2}w + [2]^{4}\varnothing - [2]^{4}\varnothing)\\
& = \frac{1}{[2]^{2}}(w^{2} - 2[2]^{2}w).
\end{align*}
Now, it suffices to show that $w^{2} - 2[2]^{2}w \in \left< wa, wb, wc \right>$, which holds if $w - 2[2]^{2} \varnothing \in \left< a, b ,c \right>$. But writing $w$ diagrammatically as
\[w = [2]K - (1,0) - (0,1) - (1,1) + [3]\varnothing\]
and using Equation (\ref{K generated})
gives
\begin{align*}
w - 2[2]^{2} \varnothing = & \frac{1}{q^{2} - q^{-2}} \left( \begin{aligned}
& q^{2} \left( (0,1) \cdot (1,0) - (q^{2} - 1)(1,1) \right)\\
- & q^{-2} \left( (1,0) \cdot (0,1) - (q^{-2} - 1)(1,1) \right)
\end{aligned}
\right)\\
& - (1,0) - (0,1) - (1,1) + [3] \varnothing - 2[2]^{2}\varnothing.
\end{align*}
Now, the constant term after substituting $a,b,c$ for the three torus knots and expanding is given by substituting $-1$ for each torus knot. It suffices to check that the constant term is zero, so that each term in the expression is a multiple of $a$, $b$, or $c$. This is in fact the case; we have
\begin{align*}
& \frac{1}{q^{2} - q^{-2}} \left( \begin{aligned}
& q^{2} \left( (-1)^{2}  - (q^{2} - 1)(-1) \right)\\
- & q^{-2} \left( (-1)^{2} - (q^{-2} - 1)(-1) \right)
\end{aligned} \right) - (-1) - (-1) - (-1) + [3] - 2[2]^{2}\\
= & \frac{q^{4} - q^{-4}}{q^{2} - q^{-2}} + 3 + [3] - 2[2]^{2}\\
= & 2q^{2} + 4 + 2q^{-2} - 2(q^{2} + 2 + q^{-2})\\
= & 0.
\end{align*}
Thus, $w - 2[2]^{2}\varnothing \in \left< a,b,c \right>$, so $x - [3]\varnothing \in \left< wa, wb, wc \right>$.
\end{proof}
We next find our finite generating set for the kernel of $i_*$.
\begin{lemma}\label{finitely generated kernel}
The kernel of $i_{\ast}$ is finitely generated. In particular, it is given by the ideal
\[\mathrm{ker}(i_{\ast}) = \left< wa, wb, wc \right>.\]
\end{lemma}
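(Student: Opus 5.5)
By Lemma \ref{w times curve kernel}, $\ker(i_*) = \langle w((k,l)+\varnothing) \rangle$, with $(k,l)$ ranging over all torus knots. The inclusion $\langle wa,wb,wc\rangle \subseteq \ker(i_*)$ is immediate, since $wa$, $wb$, $wc$ are three of these generators. For the reverse inclusion it suffices to show that every $w((k,l)+\varnothing)$ lies in $I := \langle wa, wb, wc\rangle$. I would prove this by applying Proposition \ref{qbar induction} to the set
\[X = \{k/l : w((k,l)+\varnothing) \in I\}.\]

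Property (i) of Proposition \ref{qbar induction} holds by definition of $I$: the slopes $\infty$, $0$ and $1$ correspond to $(1,0)$, $(0,1)$ and $(1,1)$, giving the generators $wb$, $wa$, $wc$.

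For property (ii), take $(k,l),(r,s)$ with determinant $1$ and slopes in $X$. Write $u=(k,l)+\varnothing$ and $v=(r,s)+\varnothing$, so that $wu, wv \in I$. Since the curves meet once, Equation (\ref{commutator equation}) gives
\[(k+r,l+s) - (k-r,l-s) = \frac{1}{q^2-q^{-2}}[(k,l),(r,s)] = \frac{1}{q^2-q^{-2}}[u,v].\]
Adding $\varnothing$ to both torus-knot terms on the left does not change it, so
\[w\big((k+r,l+s)+\varnothing\big) - w\big((k-r,l-s)+\varnothing\big) = \frac{1}{q^2-q^{-2}}\big(wuv - wvu\big).\]
Both $wuv$ and $wvu$ lie in $I$, because $wu$ and $wv$ lie in $I$ and $I$ is a two-sided ideal. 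Hence one of $w((k\pm r,l\pm s)+\varnothing)$ lies in $I$ if and only if the other does. Proposition \ref{qbar induction} then gives $X = \mathbb{Q}\cup\{\infty\}$, which proves the reverse inclusion. The division by $q^2-q^{-2}$ is where the hypothesis $q^4\neq 1$ enters.

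The step I expect to need the most care is the use of Equation (\ref{commutator equation}) in $\Y(\Sigma_{1,1})$. The sign conventions and the ordering $k/l<r/s$ have to be fixed so that the commutator produces $(k+r,l+s)-(k-r,l-s)$ with a nonzero coefficient. If the orientation convention flips a sign, the argument is unaffected, since all that is needed is that this difference is a nonzero multiple of $[u,v]$.
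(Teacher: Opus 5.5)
Your proof is correct and follows the paper's argument. Like the paper, you let $X$ be the set of slopes $k/l$ with $w((k,l)+\varnothing)\in\langle wa,wb,wc\rangle$, multiply Equation~(\ref{commutator equation}) by $w$ to verify property (ii), and conclude with Proposition~\ref{qbar induction}. The only difference is cosmetic: the paper justifies $wa,wb,wc\in\ker(i_*)$ directly from Lemma~\ref{w*(r,s)}, since $(r,s)\cdot w=-w$ in $\Y(\Sigma_{1,0})$, whereas you read it off from the statement of Lemma~\ref{w times curve kernel}.
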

\begin{proof}
The elements $wa,wb,wc$ may be written as $wa = w((0,1) + \varnothing), wb = w((1,0) + \varnothing), wc = w((1,1) + \varnothing)$, and thus are in the kernel of $i_*$ by Lemma \ref{w*(r,s)}. Thus, it suffices to show that the elements which generate $\ker(i_*)$ are in turn generated by $wa,wb,wc.$ This will employ a similar approach to Theorem \ref{punctured torus presentation} using Proposition \ref{qbar induction}, which showed that $(k,l) \in \langle a,b,c\rangle.$ Write
\[X = \{(k,l) \in \Y(\Sigma_{1,1}) : w((k,l) + \varnothing) \in \left< wa, wb, wc \right> \}.\]
Then $(1,0),(0,1),(1,1) \in X$ since
\[wa,wb,wc \in \left< wa, wb, wc \right>.\]
Thus, the first property of Proposition \ref{qbar induction} is met. We will now consider the second property. Suppose $(k,l)$ and $(r,s)$ are a pair of curves which intersect once, which holds whenever
\[\mathrm{det} \begin{pmatrix} k & l \\ r & s \end{pmatrix} = 1.\]
By multiplying both sides of Equation (\ref{commutator equation}) by $w$, we have:
\begin{equation*}
\frac{1}{q^{2} - q^{-2}}w[(k,l) + \varnothing,(r,s) + \varnothing] + w(k - r, l - s) = w(k + r,l + s)\\
\end{equation*}
From this, we can deduce that $w((k - r,l - s) + \varnothing) \in \left< wa,wb,wc \right>$ if and only if $w((k + r,l + s) + \varnothing) \in \left< wa,wb,wc \right>$. Then $X$ satisfies the second property as well, so by Proposition \ref{qbar induction}, $X = \mathbb{Q} \cup \{\infty\}$. That is, for all torus knots $(k,l)$, we have $w((k,l) + \varnothing) \in \left< wa, wb, wc \right>$. It follows $\text{ker}(i_{\ast}) = \left< wa,wb,wc \right>$.
\end{proof}

\begin{remark}
We observe that $w$ is central in $\Y(\Sigma_{1,1})$ by using the fact that $JW: \Y(\Sigma_{1,1}) \rightarrow \Sq(\Sigma_{1,1})$ is an injective map and observing that $JW(w)=[2]x+[2]^2\varnothing \in Z(\Sq(\Sigma_{1,1}))$. Thus, $Z(\Y(\Sigma_{1,1}))$ is not generated by the central boundary-parallel curve $x$ as was true for the case of the Kauffman bracket skein algebra.
\end{remark}

We finish by observing the following corollary of Theorem \ref{puncture filling}.

\begin{corollary}\label{finite presentation torus}
When $q^4 \neq 1,$ the algebra $\Y(\Sigma_{1,0})$ has a finite presentation given by the quotient of $\Y(\Sigma_{1,1})$ presented in Theorem \ref{punctured torus presentation} modulo the extra set of relations: $wa=wb=wc=0.$
\end{corollary}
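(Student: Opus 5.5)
The corollary should follow directly from Theorem \ref{punctured torus presentation} together with Theorem \ref{puncture filling}. The plan is to compose the isomorphism $f:\langle a,b,c,h \mid R\rangle \to \Y(\Sigma_{1,1})$ with the epimorphism $i_\ast:\Y(\Sigma_{1,1})\to\Y(\Sigma_{1,0})$. This gives a surjective algebra map
\[\phi = i_\ast\circ f : \langle a,b,c,h \mid R\rangle \to \Y(\Sigma_{1,0}).\]
By the first isomorphism theorem, $\Y(\Sigma_{1,0})\cong \langle a,b,c,h\mid R\rangle/\ker\phi$. Since $f$ is an isomorphism, $\ker\phi = f^{-1}(\ker i_\ast)$.

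The next step is to identify $f^{-1}(\ker i_\ast)$. By Theorem \ref{puncture filling}, $\ker i_\ast$ is the two-sided ideal $\langle wa,wb,wc\rangle$, where $a,b,c$ denote $(0,1)+\varnothing$, $(1,0)+\varnothing$, $(1,1)+\varnothing$; these are exactly the images under $f$ of the generators $a,b,c$. Since $f$ is an algebra isomorphism, it carries two-sided ideals to two-sided ideals and generators to generators. Hence $f^{-1}(\ker i_\ast)$ is the ideal generated by $\tilde w a,\tilde w b,\tilde w c$, where $\tilde w = f^{-1}(w)$.

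It remains to express $\tilde w$ as a word in the generators, so that the added relations are honestly finite relations in $a,b,c,h$. From the definition of $f(h)$,
\[w = [2]\bigl(f(h) - q f(a) - q^{-1} f(b) - q f(c) + (q+q^{-1}+q)\varnothing + q^{-1}\varnothing\bigr),\]
after substituting $(0,1)=f(a)-\varnothing$ and the analogous expressions for $(1,0)$ and $(1,1)$. The scalar term needs to be recomputed carefully, but in any case $\tilde w$ is an explicit degree-one polynomial in $a,b,c,h$. Thus the quotient is $\langle a,b,c,h \mid R,\ \tilde wa=\tilde wb=\tilde wc=0\rangle$, which has finitely many generators and relations. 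Since $\tilde w$ is only a shorthand, this is the presentation stated in the corollary.

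The hypothesis $q^4\neq 1$ is needed because both cited theorems require it. I expect the only delicate point to be the bookkeeping in the last step: the relations $wa=wb=wc=0$ must be read as $\tilde w a=\tilde w b=\tilde w c=0$, with $\tilde w$ the polynomial above. No new skein computation is required.
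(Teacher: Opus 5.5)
Your argument is correct and is the same one the paper relies on: the corollary is stated there as an immediate consequence of transporting the ideal $\ker(i_\ast)=\langle wa,wb,wc\rangle$ from Theorem \ref{puncture filling} through the isomorphism $f$ of Theorem \ref{punctured torus presentation}, and adding its finitely many generators as relations. Your scalar term is already correct, so no recomputation is needed: it equals $2q+2q^{-1}=2[2]$, giving $\tilde w=[2]\bigl(h-qa-q^{-1}b-qc+2[2]\bigr)$, and since $[2]\neq 0$ the overall factor $[2]$ can be dropped from the added relations.
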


\bibliographystyle{amsalpha}
\bibliography{references.bib}

@article {BP00,
    AUTHOR = {Bullock, Doug and Przytycki, J\'ozef H.},
     TITLE = {Multiplicative structure of {K}auffman bracket skein module
              quantizations},
   JOURNAL = {Proc. Amer. Math. Soc.},
  FJOURNAL = {Proceedings of the American Mathematical Society},
    VOLUME = {128},
      YEAR = {2000},
    NUMBER = {3},
     PAGES = {923--931},
      ISSN = {0002-9939,1088-6826},
   MRCLASS = {57M25},
  MRNUMBER = {1625701},
MRREVIEWER = {Quach thi C\^am V\^an},
       DOI = {10.1090/S0002-9939-99-05043-1},
       URL = {https://doi.org/10.1090/S0002-9939-99-05043-1},
}

@article {Prz91,
    AUTHOR = {Przytycki, J\'ozef H.},
     TITLE = {Skein modules of {$3$}-manifolds},
   JOURNAL = {Bull. Polish Acad. Sci. Math.},
  FJOURNAL = {Bulletin of the Polish Academy of Sciences. Mathematics},
    VOLUME = {39},
      YEAR = {1991},
    NUMBER = {1-2},
     PAGES = {91--100},
      ISSN = {0239-7269},
   MRCLASS = {57M25 (57N10)},
  MRNUMBER = {1194712},
}

@article {PS19,
    AUTHOR = {Przytycki, J\'ozef H. and Sikora, Adam S.},
     TITLE = {Skein algebras of surfaces},
   JOURNAL = {Trans. Amer. Math. Soc.},
  FJOURNAL = {Transactions of the American Mathematical Society},
    VOLUME = {371},
      YEAR = {2019},
    NUMBER = {2},
     PAGES = {1309--1332},
      ISSN = {0002-9947,1088-6850},
   MRCLASS = {57M27 (57M25)},
  MRNUMBER = {3885180},
MRREVIEWER = {Qi\ Chen},
       DOI = {10.1090/tran/7298},
       URL = {https://doi.org/10.1090/tran/7298},
}

@article {BW11,
    AUTHOR = {Bonahon, Francis and Wong, Helen},
     TITLE = {Quantum traces for representations of surface groups in {${\rm
              SL}_2(\Bbb C)$}},
   JOURNAL = {Geom. Topol.},
  FJOURNAL = {Geometry \& Topology},
    VOLUME = {15},
      YEAR = {2011},
    NUMBER = {3},
     PAGES = {1569--1615},
      ISSN = {1465-3060,1364-0380},
   MRCLASS = {57R56 (57Mxx)},
  MRNUMBER = {2851072},
       DOI = {10.2140/gt.2011.15.1569},
       URL = {https://doi.org/10.2140/gt.2011.15.1569},
}

@article {FK09,
    AUTHOR = {Fendley, Paul and Krushkal, Vyacheslav},
     TITLE = {Tutte chromatic identities from the {T}emperley-{L}ieb
              algebra},
   JOURNAL = {Geom. Topol.},
  FJOURNAL = {Geometry \& Topology},
    VOLUME = {13},
      YEAR = {2009},
    NUMBER = {2},
     PAGES = {709--741},
      ISSN = {1465-3060,1364-0380},
   MRCLASS = {57M15 (05C15 05C31 57R56)},
  MRNUMBER = {2469528},
MRREVIEWER = {Volodymyr\ Mazorchuk},
       DOI = {10.2140/gt.2009.13.709},
       URL = {https://doi.org/10.2140/gt.2009.13.709},
}

@article {FK10,
    AUTHOR = {Fendley, Paul and Krushkal, Vyacheslav},
     TITLE = {Link invariants, the chromatic polynomial and the {P}otts
              model},
   JOURNAL = {Adv. Theor. Math. Phys.},
  FJOURNAL = {Advances in Theoretical and Mathematical Physics},
    VOLUME = {14},
      YEAR = {2010},
    NUMBER = {2},
     PAGES = {507--540},
      ISSN = {1095-0761,1095-0753},
   MRCLASS = {57M27 (82B20)},
  MRNUMBER = {2721654},
MRREVIEWER = {Sergei\ K.\ Lando},
       DOI = {10.4310/atmp.2010.v14.n2.a4},
       URL = {https://doi.org/10.4310/atmp.2010.v14.n2.a4},
}

@article {FK23,
    AUTHOR = {Fendley, Paul and Krushkal, Vyacheslav},
     TITLE = {Topological quantum field theory and polynomial identities for
              graphs on the torus},
   JOURNAL = {Ann. Inst. Henri Poincar\'e{} D},
  FJOURNAL = {Annales de l'Institut Henri Poincar\'e{} D. Combinatorics,
              Physics and their Interactions},
    VOLUME = {10},
      YEAR = {2023},
    NUMBER = {2},
     PAGES = {277--298},
      ISSN = {2308-5827,2308-5835},
   MRCLASS = {57K16 (05C31 57M15 81T45 82B20)},
  MRNUMBER = {4581444},
MRREVIEWER = {Yilong\ Wang},
       DOI = {10.4171/aihpd/130},
       URL = {https://doi.org/10.4171/aihpd/130},
}

@article {AK16,
    AUTHOR = {Agol, Ian and Krushkal, Vyacheslav},
     TITLE = {Tutte relations, {TQFT}, and planarity of cubic graphs},
   JOURNAL = {Illinois J. Math.},
  FJOURNAL = {Illinois Journal of Mathematics},
    VOLUME = {60},
      YEAR = {2016},
    NUMBER = {1},
     PAGES = {273--288},
      ISSN = {0019-2082,1945-6581},
   MRCLASS = {05C31 (05C10 57M15 57R56)},
  MRNUMBER = {3665181},
MRREVIEWER = {Volodymyr\ Mazorchuk},
       URL = {http://projecteuclid.org/euclid.ijm/1498032033},
}

@article {Chb12,
    AUTHOR = {Chbili, Nafaa},
     TITLE = {Graph skein modules and symmetries of spatial graphs},
   JOURNAL = {J. Knot Theory Ramifications},
  FJOURNAL = {Journal of Knot Theory and its Ramifications},
    VOLUME = {21},
      YEAR = {2012},
    NUMBER = {9},
     PAGES = {1250090, 16},
      ISSN = {0218-2165,1793-6527},
   MRCLASS = {57M25 (05C10)},
  MRNUMBER = {2926573},
MRREVIEWER = {Jianyuan\ K.\ Zhong},
       DOI = {10.1142/S0218216512500903},
       URL = {https://doi.org/10.1142/S0218216512500903},
}

@article {Chb06,
    AUTHOR = {Chbili, Nafaa},
     TITLE = {Skein algebras of the solid torus and symmetric spatial
              graphs},
   JOURNAL = {Fund. Math.},
  FJOURNAL = {Fundamenta Mathematicae},
    VOLUME = {190},
      YEAR = {2006},
     PAGES = {1--10},
      ISSN = {0016-2736,1730-6329},
   MRCLASS = {57M27 (05C10 57M15 57M25)},
  MRNUMBER = {2232853},
MRREVIEWER = {Carlo\ Petronio},
       DOI = {10.4064/fm190-0-1},
       URL = {https://doi.org/10.4064/fm190-0-1},
}

@article {Dew73,
    AUTHOR = {Dewdney, A. K.},
     TITLE = {Wagner's theorem for torus graphs},
   JOURNAL = {Discrete Math.},
  FJOURNAL = {Discrete Mathematics},
    VOLUME = {4},
      YEAR = {1973},
     PAGES = {139--149},
      ISSN = {0012-365X,1872-681X},
   MRCLASS = {05C10},
  MRNUMBER = {309773},
MRREVIEWER = {E.\ A.\ Nordhaus},
       DOI = {10.1016/0012-365X(73)90076-9},
       URL = {https://doi.org/10.1016/0012-365X(73)90076-9},
}

@article {Neg94,
    AUTHOR = {Negami, Seiya},
     TITLE = {Diagonal flips in triangulations of surfaces},
   JOURNAL = {Discrete Math.},
  FJOURNAL = {Discrete Mathematics},
    VOLUME = {135},
      YEAR = {1994},
    NUMBER = {1--3},
     PAGES = {225--232},
      ISSN = {0012-365X,1872-681X},
   MRCLASS = {05C10},
  MRNUMBER = {1310882},
MRREVIEWER = {D.\ Barnette},
       DOI = {10.1016/0012-365X(93)E0101-9},
       URL = {https://doi.org/10.1016/0012-365X(93)E0101-9},
}

@article {FG00,
    AUTHOR = {Frohman, Charles and Gelca, R\u{a}zvan},
     TITLE = {Skein modules and the noncommutative torus},
   JOURNAL = {Trans. Amer. Math. Soc.},
  FJOURNAL = {Transactions of the American Mathematical Society},
    VOLUME = {352},
      YEAR = {2000},
    NUMBER = {10},
     PAGES = {4877--4888},
      ISSN = {0002-9947,1088-6850},
   MRCLASS = {57M25 (46L87 57M27 58B34)},
  MRNUMBER = {1675190},
MRREVIEWER = {Justin\ D.\ Roberts},
       DOI = {10.1090/S0002-9947-00-02512-5},
       URL = {https://doi.org/10.1090/S0002-9947-00-02512-5},
}

@misc{DKS24,
      title={Skein modules and character varieties of Seifert manifolds}, 
      author={Renaud Detcherry and Efstratia Kalfagianni and Adam S. Sikora},
      year={2025},
      eprint={2405.18557},
      archivePrefix={arXiv},
      primaryClass={math.GT},
      url={https://arxiv.org/abs/2405.18557},
      note={arxiv:2405.18557, to appear in Trans. Amer. Math. Soc.}
}

@misc{Jor23,
      title={Langlands duality for skein modules of 3-manifolds}, 
      author={David Jordan},
      year={2023},
      eprint={2302.14734},
      archivePrefix={arXiv},
      primaryClass={math.QA},
      url={https://arxiv.org/abs/2302.14734},
      note={arxiv:2302.14734}
}

@article {SW07,
    AUTHOR = {Sikora, Adam S. and Westbury, Bruce W.},
     TITLE = {Confluence theory for graphs},
   JOURNAL = {Algebr. Geom. Topol.},
  FJOURNAL = {Algebraic \& Geometric Topology},
    VOLUME = {7},
      YEAR = {2007},
     PAGES = {439--478},
      ISSN = {1472-2747,1472-2739},
   MRCLASS = {57M15 (05C10 05C30 17B20)},
  MRNUMBER = {2308953},
MRREVIEWER = {S.\ V.\ Duzhin},
       DOI = {10.2140/agt.2007.7.439},
       URL = {https://doi.org/10.2140/agt.2007.7.439},
}

@article {Kup96,
    AUTHOR = {Kuperberg, Greg},
     TITLE = {Spiders for rank {$2$} {L}ie algebras},
   JOURNAL = {Comm. Math. Phys.},
  FJOURNAL = {Communications in Mathematical Physics},
    VOLUME = {180},
      YEAR = {1996},
    NUMBER = {1},
     PAGES = {109--151},
      ISSN = {0010-3616,1432-0916},
   MRCLASS = {17B10 (22E60 81R05)},
  MRNUMBER = {1403861},
MRREVIEWER = {Stefano\ Capparelli},
       URL = {http://projecteuclid.org/euclid.cmp/1104287237},
}

@article {FKBL19,
    AUTHOR = {Frohman, Charles and Kania-Bartoszynska, Joanna and L\^e,
              Thang},
     TITLE = {Unicity for representations of the {K}auffman bracket skein
              algebra},
   JOURNAL = {Invent. Math.},
  FJOURNAL = {Inventiones Mathematicae},
    VOLUME = {215},
      YEAR = {2019},
    NUMBER = {2},
     PAGES = {609--650},
      ISSN = {0020-9910,1432-1297},
   MRCLASS = {57M27 (16H05 16S99)},
  MRNUMBER = {3910071},
MRREVIEWER = {Dylan\ G. L. Allegretti},
       DOI = {10.1007/s00222-018-0833-x},
       URL = {https://doi.org/10.1007/s00222-018-0833-x},
}

@misc{HSSSSW26,
      title={$\rm{PGL}_2$ webs at roots of unity}, 
      author={Higgins, Vijay and Silva, Bryan and Somers, Amy and Sundar, Srijan and Stephens, David and Wokhlu, Parth},
      year={2026},
      note={in preparation}
}

\end{document}